\documentclass[12pt,reqno]{amsart}
\usepackage[headings]{fullpage}
\usepackage{amssymb,amsmath,amscd,tikz,tikz-cd,mathtools}
\usepackage{graphicx}
\usepackage{texdraw}
\usepackage{pb-diagram}
\usepackage[all,cmtip]{xy}
\usepackage{url}
\usepackage[bookmarks=true,%
    colorlinks=true,%
    linkcolor=blue,%
    citecolor=blue,%
    filecolor=blue,%
    menucolor=blue,%
    urlcolor=blue,%
    breaklinks=true]{hyperref}
\usepackage{slashed}
\usepackage{stmaryrd}
\usepackage{verbatim}

\usetikzlibrary{hobby}
\usetikzlibrary{decorations.pathmorphing}
\tikzset{snake it/.style={decorate, decoration=snake}}
\usetikzlibrary{patterns}

\newtheorem{theorem}{Theorem}[section]
\theoremstyle{definition}
\newtheorem{proposition}[theorem]{Proposition}
\newtheorem{lemma}[theorem]{Lemma}
\newtheorem{definition}[theorem]{Definition}
\newtheorem{remark}[theorem]{Remark}
\newtheorem{corollary}[theorem]{Corollary}

\newtheorem{question}[theorem]{Question}

\def\BZ{\mathbb Z}
\def\BQ{\mathbb Q}
\def\BR{\mathbb R}
\def\BC{\mathbb C}

\def\calA{\mathcal A}

\def\calC{\mathcal C}

\def\calH{\mathcal H}

\def\s{\sigma}

\def\SL{\mathrm{SL}}

\def\pt{\partial}

\def\Res{\mathrm{Res}}
\def\Vol{\mathrm{Vol}}
\def\a{\alpha}
\def\b{\beta}

\def\th{\theta}

\def\Li{\mathrm{Li}}

\def\be{\begin{equation}}
\def\ee{\end{equation}}

\def\z{\zeta}

\def\diag{\mathrm{diag}}

\def\Om{\Omega}


\usepackage{accents}

\def\={\;=\;}

\newcommand{\om}{\omega}

\def\dR{\mathrm{dR}}
\def\vphi{\varphi}
\def\Sf{f^{\mathrm{sym}}}
\def\Sh{\mathrm{S} h}

\def\nv{\mathrm{naiv}}
\def\W{\mathsf{W}}
\def\rem{\mathrm{rem}}
\def\tr{\mathrm{tr}}
\def\Spec{\mathrm{Spec}}
\def\max{\mathrm{max}}

\makeatletter

\renewcommand\thepart{\@Roman\c@part}%
\renewcommand\part{%
   \if@noskipsec \leavevmode \fi
   \par
   \addvspace{6.7ex}%
   \@afterindentfalse
   \secdef\@part\@spart}
\def\@part[#1]#2{%
    \ifnum \c@secnumdepth >\m@ne
      \refstepcounter{part}%
      \addcontentsline{toc}{part}{Part~\thepart.\ #1}%
    \else
      \addcontentsline{toc}{part}{#1}%
    \fi
    {\parindent \z@ \raggedright
     \interlinepenalty \@M
     \normalfont
     \ifnum \c@secnumdepth >\m@ne
       \centering\large\scshape \partname~\thepart.%
       \hspace{1ex}%
     \fi%
     \large\scshape #2%
     \markboth{}{}\par}%
    \nobreak
    \vskip 4.7ex
    \@afterheading}
  \def\@spart#1{
  \refstepcounter{part}%
  \addcontentsline{toc}{part}{#1}%
    {\parindent \z@ \raggedright
     \interlinepenalty \@M
     \normalfont
     \centering\large\scshape #1\par}%
     \nobreak
     \vskip 4.7ex
     \@afterheading}
\renewcommand*\l@part[2]{%
  \ifnum \c@tocdepth >-2\relax
    \addpenalty\@secpenalty
    \addvspace{0.75em \@plus\p@}%
    \begingroup
      \parindent \z@ \rightskip \@pnumwidth
      \parfillskip -\@pnumwidth
      {\leavevmode
       \normalsize \bfseries #1\hfil \hb@xt@\@pnumwidth{\hss #2}}\par
       \nobreak
       \if@compatibility
         \global\@nobreaktrue
         \everypar{\global\@nobreakfalse\everypar{}}%
      \fi
    \endgroup
  \fi}

\def\l@subsection{\@tocline{2}{0pt}{2pc}{6pc}{}}
\makeatother

\begin{document}
\title[Explicit classes in Habiro cohomology]{
 Explicit classes in Habiro cohomology}

\author{Stavros Garoufalidis}
\address{
  International Center for Mathematics, Department of Mathematics \\
  Southern University of Science and Technology \\
  Shenzhen, China \newline
  {\tt \url{http://people.mpim-bonn.mpg.de/stavros}}}
\email{stavros@mpim-bonn.mpg.de}

\author{Campbell Wheeler}
\address{Institut des Hautes Études Scientifiques\\ \newline
         35 rte de Chartres, 91440 Bures-sur-Yvette, France \newline
         {\tt \url{https://www.ihes.fr/~wheeler/}}}
\email{wheeler@ihes.fr}

\thanks{
  {\em Key words and phrases:}
  Habiro ring, Habiro cohomology, de Rham cohomology, $q$-de Rham cohomology,
  D-modules, holonomic D-modules, Gauss--Manin connection, Picard-Fuchs equations,
  $q$-Picard-Fuchs equations, Calabi-Yau manifolds,
  knots, 3-manifolds, Chern--Simons theory, TQFT, 3D-index, $q$-hypergeometric
  multisums, hypergeometric functions, $q$-hypergometric functions,
  $q$-holonomic functions, $q$-$\Gamma$, $q$-B function.
}

\date{21 May 2025}

\begin{abstract}
  We propose a cycle description of the Habiro cohomology of a smooth variety $X$
  over the spectrum $B$ of an \'etale $\BZ[\lambda]$-algebra and construct explicit
  nontrivial cycles using either the Picard-Fuchs equation on $X/B$
  of a hypergeometric motive, or a push-forward of elements
  of the Habiro ring of $X/B$. In particular, we give explicit classes for
  1-parameter Calabi--Yau families.
  The $q$-hypergeometric origin of our cycles imply
  that they generate $q$-holonomic modules that define $q$-deformations of the
  classical Picard-Fuchs equation. We illustrate our theorems with three examples:
  the Legendre family of elliptic
  curves, the $A$-polynomial curve of the figure eight knot, and
  for the quintic three-fold, whose $q$-Picard Fuchs equation appeared in
  its genus $0$-quantum $K$-theory. Our methods give a unified treatment of 
  quantum $K$-theory and complex Chern-Simons theory around higher dimensional
  critical loci. 
\end{abstract}

\maketitle

{\footnotesize
\tableofcontents
}


\section{Introduction}
\label{sec.intro}

\subsection{A dream}
\label{sub.dream}

A long-held dream of Peter Scholze foresaw the association of a module to a
smooth variety $X$ over the spectrum $B$ of an \'etale $\BZ[\lambda]$-algebra
$R$,\footnote{here $\lambda$ denotes a finite and possibly empty set of variables}
now known as the Habiro cohomology $\calH(X/B)$. This module is finitely generated
(or even finite free after some mild modifications) over the Habiro ring
$\calH_R$ of $R$. A crucial step in this direction was the definition of the
Habiro ring of $R$~\cite{GSWZ}.

Recently, Ferdinand Wagner constructed such a cohomology theory with the usual
functorial properties~\cite{Wagner:habiro}. The Habiro cohomology is a more
sophisticated version of the $q$-de Rham cohomology~\cite{Scholze:canonical,BS:prisms}
and in some sense, a deformation of the de Rham and Hodge cohomology around all
complex roots of unity.

Although it is well-known how to give algebraic de Rham cohomology classes,
say of an affine variety, it seems a much more difficult problem to describe
classes of the $q$-de Rham, $q$-Hodge, or of the Habiro cohomology itself.
Some first steps in this direction were taken in~\cite{Shirai}.
This problem was already encountered in the case of the Habiro ring of an
\'etale $\BZ$-algebra $R$, and elements were given in~\cite{GSWZ} using explicit
proper $q$-hypergeometric series, known as Nahm sums, that appear naturally 
in complex Chern--Simons theory in 3-dimensions and in the admissible series of
Kontsevich--Soibelman.

\subsection{An experiment}
\label{sub.exp}

We first explain how we came up with the definition of the naive Habiro cohomology
given below. The definition was revealed by numerical experiments of asymptotics
of certain $q$-series. 
Drawing on the ideas of~\cite{GSWZ}, we used as input the asymptotic series that
appear in complex Chern--Simons theory and more precisely in the asymptotics of the
3D-index~\cite{GW:periods}. For the simplest hyperbolic $4_1$, we found that its
vertical asymptotics when $q=e^{2\pi i\tau}$ and $\tau$ tends to 0 is in the
imaginary axis sum of 3 asymptotic series
$\tau^{-1/2}\Phi(2\pi i \tau) -i \tau^{-1/2}\Phi(-2\pi i \tau) + \Psi(2 \pi i \tau)$
where 
\be
\label{Phival}
\Phi(\hbar) \=
e^{\frac{2 \Vol(4_1)}{\hbar}}
\frac{1}{3^{\frac{3}{4}}2^{\frac{1}{2}}}
\left(1-\frac{19}{24\sqrt{-3}^3} \hbar
    +\frac{1333}{1152\sqrt{-3}^6} \hbar^2
    -\frac{1601717}{414720\sqrt{-3}^9} \hbar^3+\dots\right) 
\ee
is (ignoring the exponential and the constant term) a series with coefficients
in $\BQ(\sqrt{-3})$, and
\be
\label{Psival}
\Psi(\hbar) \= \frac{\kappa}{\hbar}\Big(1+\frac{227}{155250}\hbar^4
-\frac{3191201}{2200668750}\hbar^8+\cdots\Big) + \hbar \kappa'
\Big(1-\frac{75199}{310500}\hbar^2+\frac{319906757}{1257525000}\hbar^4+\cdots \Big)
\ee
where $\kappa$ and $\kappa'$ are some constants defined shortly. 

The series $\Phi(\hbar)$ is a familiar series of the type studied in~\cite{GSWZ}.
The coefficients in the algebraic number field $\BQ(\sqrt{-3})$
(the trace field of the hyperbolic knot) 
have more than factorially growing universal denominators of
~\cite[Eqn.(9.2)]{GZ:kashaev}. Its symmetrization $\Phi(\hbar)\Phi(-\hbar)$
after a change of variables from $\hbar=\log q$ to $q-1$, is a power series in
$\BZ[\sqrt{-3},\tfrac{1}{6}][\![q-1]\!]$ and in fact an element of the Habiro ring
$\calH_{\BZ[\sqrt{-3},\tfrac{1}{6}]}$. 

On the other hand, the series $\Psi(\hbar)$ has two new features. The first is that
the constants $\kappa$ and $\kappa'$ are periods of an elliptic curve $X$
defined by the equation
\be
\label{A41}
xy+(1-x^2y)(1-y)=0
\ee (the so-called $A$-polynomial of the $4_1$ knot), and given
explicitly by
\be
\kappa \= \int_{\mathcal{C}}\omega \=1.400603042\dots, \qquad
\kappa' \=  \int_{\mathcal{C}}\omega' \= 0.008005298\dots \,,
\ee
where $\calC$ is a suitable contour, 
\be
\label{om2}
\omega\=\frac{1}{\delta}\frac{dx}{x}\,,\qquad
\omega'\=\frac{1}{\delta^{7}}
(x^3 - x^2 - 2x + 5 - 2x^{-1} - x^{-2} + x^{-3})\frac{dx}{x}\,,
\ee
and $\delta^2(x) \=x^{2} - 2x - 1 - 2x^{-1} + x^{-2}$. 

The second feature is that the coefficients of $\Psi(\hbar)$ are rational numbers
with denominators growing like those of $1/k!$, with the coefficient of $\hbar^{150}$
having denominator $2^{-1}\cdot 3^{151}\cdot5^{15}\cdot7\cdot11\cdot23\cdot31\cdot151!$
and contrary to the expectations of~\cite{GSWZ},
after the change of variables from $\hbar=\log q$ to $q-1$, it still has
denominators, albeit slowly growing. For example, the denominator of $(q-1)^{100}$
in $\Psi(\hbar)$ is
\be
2^6\cdot 3^{147}\cdot 5^{125}\cdot\prod_{p=7\,\text{prime}}^{101}p\,.
\ee
The primes $2$, $3$ and $5$ are explained by the bad reduction of the elliptic curve,
but the rest of the primes in the denominator appear with exponent $1$.
The exponents do grow though,
for instance going further to $(q-1)^{150}$, we found that the denominator has
$11$-valuation $2$, and $p$-valuation $1$ for all primes $p$ with $5<p \leq 151$. 

At this point, we removed the largely irrelevant contour $\calC$, and replaced the
constants $\kappa$ and $\kappa'$ by their integrands, namely the differential forms
$\om$ and $\om'$, considered up to exact forms in algebraic de Rham cohomology 
in $H^1_\dR(X/\Spec(\BZ[1/30]),\BQ)$, which we abbreviate simply by $H^1_\dR(X)$.
Doing so, we obtained a power series in $H^1_\dR(X)[\![q-1]\!]$ with mildly growing
denominators, which is therefore $p$-adically convergent for $|q-1|<1$ and can be
expanded around $q-\z_p$ for any prime $p>5$. We then expanded the 3D-index around
$\z_p$ to obtain a second series in  $H^1_\dR(X)[\z_p][\![q-\z_p]\!]$. Comparing the
two, after Frobenius twist for $p=7$, and we found that they matched! This motivated
the definition of the naive Habiro cohomology. 

The upshot of the experiment is that the asymptotics of the 3D-index of the
$4_1$ knot are expressed in terms of elements of the naive Habiro cohomology
$\calH^1_\nv(X)$ where $X$ is the $A$-polynomial of the knot. However, our methods
can define not only one element of $\calH^1_\nv(X)$, but a whole sequence of
them index by integers (the so-called descendants). And then we found that any
three such elements satisfied a linear relation with coefficients presumably in
the Habiro ring $\calH_{\BZ[1/30]}$, in accordance with the idea that the rank of
$H^1_\dR(X)$ is $2$.

A final extension of our experiments involved adding one more variable $\lambda$
so that $X$ was now a fiber of a 1-parameter family of ellptic curves, parametrized
by $\lambda$. In that case, the $q$-hypergeometric origin of our elements in
$\calH^1_\nv(X)$ imply that they generate a $q$-holonomic
submodule of $\calH^1_\nv(X)$ whose corresponding linear $q$-difference equation
is a $6$th order deformation of the Picard-Fuchs equation of the family of elliptic
curves.

This $q$-holonomic $\BZ[\lambda,q]$-submodule in $\calH^1_\nv(X)$ of excess
rank $6$ is a $q$-deformation of the degree $2$ Picard-Fuchs equation of the family
of elliptic curves, and it is a very mysterious part in this new Habiro cohomology,
whose understanding may lead to a further refinement of this theory. 

Having understood the emerging structure, we proceed to formalize it
in the form of a definition, and then provably construct explicit
elements in the naive Habiro cohomology using two different methods: either
from a $q$-deformation of a hypergeometric motive in the form of a geometric
$D$-module, or by a push-forward of the Habiro ring of an \'etale map. The two
methods are complementary and roughly speaking mimic two approaches to compute
algebraic de Rham cohomology, namely either using $D$-modules
or using reduction theory on complexes of
algebraic forms (see e.g., Griffiths~\cite{Griffiths} and also the excellent
expositions of Cox-Katz~\cite[Sec.5.3]{Cox-Katz} and Lairez~\cite{Lairez}). 

We illustrate both methods with the Legendre family of elliptic curves, the
family of elliptic curves associated to the $4_1$ knot, and with the quintic 3-fold,
whose $q$-Picard-Fuchs equation already appeared in its genus $0$ quantum $K$-theory
~\cite{GS:quintic}.

\subsection{A definition}
\label{sub.habdef}

Our next task is to formalize the definition of the Habiro cohomology. This is
modelled on the definition of the Habiro ring $\calH_{R/\BZ[\lambda]}$ of
an \'etale $\BZ[\lambda]$-algebra $R$, defined in~\cite{GSWZ} and
presented in further detail by Scholze~\cite{Scholze:hab}. Let us recall this
definition here, which among other things motivates the forthcoming definition of
Habiro cohomology. 

We fix an \'etale $\BZ[\lambda]$-algebra $R$ (or synonymously an \'etale map
$\BZ[\lambda]\to R$ of rings), where $\lambda$ denotes a finite
and possibly empty set of variables. The ring $\BZ[\lambda]$ has the additional
structure of a $\Lambda$-ring with Adams operations $\psi_m(\lambda)=\lambda^m$
for $m \geq 1$, which induce $p$-Frobenius endomorphisms
on the $p$-adic completion $R^{\wedge}_p=\varprojlim_n R/(p^n) \cong R\otimes \BZ_p$
of $R$ for all but finitely many primes $p$, determined by
$\varphi_p(\lambda)=\lambda^p$. Two basic examples to keep in mind are the
\'etale algebras $R=\BZ[1/N]$ or $R=\BZ[\lambda,1/\Delta]$ for a single variable
$\lambda$.

In addition, we fix a collection of compatible complex roots of unity
$(\z_m)$ order $m$, that is a collection satisfying
\be
\label{zmdef}
\z_{mm'} \=\z_m\z_{m'}, \qquad (m,m')\=1, \qquad
(\z_{p^r})^p \=\z_{p^{r-1}}
\ee
for all positive coprime integers $m$ and $m'$, primes $p$ and positive integers $r$.

\begin{definition}
\label{def.habR}\cite{GSWZ}
The Habiro ring $\calH_{R}$ of an \'etale $\BZ[\lambda]$-algebra $R$   
consists of collections of series
\be
f(q)\=(f_m(q-\z_m))_{m \geq 1}
\in\prod_{m \geq 1}R_m[\![q-\z_m]\!]
\ee
where $R_m=R[\lambda^{1/m},\z_m]$ that satisfy the gluing 
\be
f_m(q-\z_{pm} + (\z_{pm} - \z_m))
\= \varphi_pf_{pm}(q-\z_{pm})\in (R_{pm})^\wedge_p[\![q-\z_{pm}]\!]
\ee
for all $m \geq 1$ and all primes $p$.  
\end{definition}

Fix an $R$-module $V$ with compatible Frobenius automorphism
\be
\varphi_p:V\otimes\BZ_p\to V\otimes\BZ_p
\ee
for all but finitely many primes $p$. For a positive integer $m$, we set
$V_m=V\otimes R_m$. 

\begin{definition}
\label{def.hab}  
The $\calH_{R}$-module $\calH(V)$ associated to $V$ consists of collections of series
\be
f(q)\=(f_m(q-\z_m))_{m \geq 1}
\in\prod_{m \geq 1}V_m\otimes\BQ[\z_m][\![q-\z_m]\!]
\ee
that are convergent for $|q-\z_m|_p<1$ and
satisfy the gluing 
\be
f_m(q-\z_{pm} + (\z_{pm} - \z_m)) \=
\varphi_pf_{pm}(q-\z_{pm})\in V_{pm}\otimes\BQ_p[\z_{mp}][\![q-\z_{pm}]\!]
\ee
for all $m \geq 1$ and all but finitely many primes $p$.  
\end{definition}

\begin{remark}
We will often consider collections on a subset of roots of unity, for example, the roots of unity with order $m$ coprime to some integer $N$.
\end{remark}

The series in $q-\z_m$ that we consider have logarithmically-growing denominators,
i.e., the the $p$-valuation of the coefficient of $(q-\z_m)^k$ is bounded below by
a multiple of $\log_pk$. This effectively comes from the fact that
$
\frac{\log q}{q-\z_m}
$
is a holomorphic function on $|q-\z_m|_p<1$ with logarithmically-growing denominators.
It is curious that the logarithmic-growth of the powers
of $q-\z_m$ matches the Christol-Dwork-Robba bounds for the $p$-adic valuation of
a fundamental solution (a series in $\lambda$ and $\log \lambda$)
of a $p$-adic differential equation that comes from geometry discussed in Kedlaya
~\cite[Thm.18.2.1,Thm.18.5.1]{Kedlaya:book}. 

The module $V$ that we consider will be the relative algebraic de Rham cohomology
$H^n_{\dR}(X/B)$ (modulo torsion) of a smooth proper map
\be
\label{bf}
f: X \to B, \qquad B=\Spec(R), \qquad R/\BZ[\lambda] \,\, \text{\'etale} \,.
\ee
This cohomology is equipped with a Frobenius endomorphism
for all but finitely many primes obtained from
the comparison to crystalline cohomology~\cite{Berthelot:comparison}.
(See~\cite{Kedlaya:practice,Kedlaya:effective} for an
elementary definition and effective computations of the Frobenius endomorphisms
using rigid cohomology.) Using this, we define
\be
\label{Hnaive}
\calH_\nv^{n}(X/B) \;:=\; \calH(H^n_{\dR}(X/B))\,.
\ee
We expect that this definition should capture some of the important features of
Habiro cohomology being developed by Wagner~\cite{Wagner:habiro}
and Scholze~\cite{Scholze:habcoh}.

\begin{remark}
\label{rem.GM}
An important feature of a smooth proper map of~\eqref{bf} is its Gauss--Manin
connection and the corresponding Picard-Fuchs equation, which will play a key
role in our methods to construct elements in $\calH_\nv^{n}(X/B)$.
\end{remark}

In the following sections we give two (geometric engineering) methods for
constructing elements in the Habiro cohomology. The first method introduced
in Section~\ref{sub.method1} uses as input a hypergeometric motive,
and was inspired among other things, by Shirai's work on a $q$-deformation of
the Legendre family of elliptic curves~\cite{Shirai}. The second 
method, a push-forward property from the Habiro ring of an \'etale map,
is discussed in Section~\ref{sub.method2}.

\subsection{Habiro cohomology classes from hypergeometric motives}
\label{sub.method1}

A rich source of proper smooth maps~\eqref{bf} come from hypergeometric
motives. The latter depend on combinatorial
data, akin to the parameters of classical hypergeometric functions. These motives
and their corresponding Hodge structures, point counts and $L$-functions are discussed
in detail in the excellent presentation of Roberts--Rodriguez-Villegas~\cite{RV:hyper}.

What's more, hypergeometric motives come equipped with an explicit Gauss--Manin
connection (a general feature of the maps in~\eqref{bf}) as well as an explicit
de Rham cohomology class and an explicit hypergeometric function, which both
satisfy the corresponding Picard-Fuchs equation.

The data that defines a hypergeometric motive is a pair of vectors
$\a=(\a_1,\dots,\a_{n+1})$ and $\b=(\b_1,\dots,\b_{n+1})$
of rational
numbers.
We will assume that
\begin{itemize}
\item[(A)]
the sets $\a$ and $\beta$ are disjoint with common denominator $N$,
and, for convenience, that $\b_{n+1}=1$ and $\a_j, \b_j \in (0,1]$ and
$\a_j<\b_j$ for all $j=1,\dots,n+1$. 
\end{itemize}

Given such a data $(\a,\b)$, one can define
\begin{itemize}
\item
  a smooth proper family $X/B$ of relative dimension $n$,
  where $B=\Spec(\BZ[\lambda,(\lambda(\lambda-1)N)^{-1}])$ (explicitly, take $X$
  to be the projectivisation of
  \be
  \{y,z\in\mathbb{A}^n,w,\lambda\in\mathbb{A}^1\;
  |\;y_i^N+z_i^N\=1,w^N+\lambda \, y_1^N\cdots y_n^N\=1\})\,,
  \ee
\item
  a hypergeometric power series
\be
\label{fab}
{}_{n+1}F_{n}(\a;\b;\lambda) \=  \sum_{k\in\BZ_{\geq0}}
  \frac{(\a_1)_k\cdots(\a_{n+1})_k}{(\b_1)_k\cdots(\b_{n+1})_k}
  \lambda^{k} \in \BQ[\![\lambda]\!]
  \ee
  (where $(a)_k=a(a+1)\dots (a+k-1)$ is the Pochhammer symbol) that satisfies the
  Picard-Fuchs equation
\be
\label{eq:de2}
P{\;}_{n+1}F_{n}(\a;\b;\lambda)=0, \qquad
P \= \th\prod_{j=1}^{n}(\th+\b_j-1)-\lambda\prod_{j=1}^{n+1}(\th+\a_j) \in \W
\ee
  where $\W \=\BZ[\lambda]\langle \th \rangle/(\th\lambda-\lambda\th-\lambda)$ and 
  $\th=\lambda \pt_\lambda$.
\item
  a class $\om=\om(\a;\b;\lambda) \in H_\dR^n(X/B)$ that is annihilated
  by the Gauss-Manin connection,
  i.e., satisfies $P \om =0$, explicitly, for $y_j^N=x_j$ we define
  \be
  \om(\a;\b;\lambda)\=(1-\lambda x_1\cdots x_{n})^{-\a_{n+1}}
  \bigwedge_{j=1}^{n} x_j^{\a_j}(1-x_j)^{\b_j-1-\a_j}\frac{dx_j}{x_j}\,.
  \ee
\end{itemize}

A detailed definition of the tuple $(X/B,P,\om)$, which of course depends on
$(\a,\b)$, is given in Section~\ref{sub.nfn}. We now explain how to use such
a tuple to define explicit classes in $\calH_\nv^{n}(X/B)$.

Remarkably, the definition uses essentially only the solution~\eqref{fab} of the
Picard-Fuchs equation, and is obtained in three steps
\be
\label{squigs}
{}_{n+1}F_{n}(\a;\b;\lambda) \rightsquigarrow
{}_{n+1}\phi_{n}(\a;\b;\lambda,q)
\rightsquigarrow (D_m)_{m \geq 1}
\rightsquigarrow
\omega_q \in \calH_\nv^{n}(X/B)\,,
\ee

The first step involves the $q$-deformation 
obtained by replacing the Pochhammer symbols in~\eqref{fab} by $q$-Pochhammer symbols
(where $(x;q)_k=(1-x)(1-qx)\dots(1-q^{k-1}x)$),
\be
\label{fabq}
{}_{n+1}\phi_{n}(\a;\b;\lambda,q) \=
\sum_{k\in\BZ_{\geq0}}
  \frac{(q^{\a_1};q)_k\cdots(q^{\a_{n+1}};q)_k}
  {(q^{\b_1};q)_k\cdots(q^{\b_{n+1}};q)_k}
  \lambda^{k} 
\ee
which is a (proper) $q$-hypergeometric series originally introduced by Heine.
The latter satisfies a linear $q$-difference equation (see ~\eqref{eq:deq} below).

The second step compares the solution ${}_{n+1}F_{n}(\a;\b;\lambda)$ and
its $q$-deformation ${}_{n+1}\phi_{n}(\a;\b;\lambda,q)$ near roots of unity
using a family $D_m$ of differential operators. To define them near $q=1$, first
we expand $(q^a;q)_k$ as a power series in
$(-1)^k(a)_k \BQ[k][\![q-1]\!]$ to obtain that 
\be
\label{Pk1}
  \prod_{i=1}^{n+1}\frac{(q^{\a_i};q)_k}{(q^{\b_i};q)_k}
  \= D_1(k,q-1)
  \prod_{i=1}^{n+1} \frac{(\a_i)_k}{(\b_i)_k},
  \qquad D_1(k,x) \in \BQ[k][\![q-1]\!]
\ee
with $D_1(k,0)=1$. Since $\th \lambda^k = k \lambda^k$ for all $k$, it follows
that
\be
\label{f1}
{\;}_{n+1}\phi_{n}(\a;\b;\lambda,q)
\= D_1(\th,q-1) {\;}_{n+1}F_{n}(\a;\b;\lambda)
, \qquad D_1(\th,q-1) \in \W_\BQ[\![q-1]\!]
\ee
where $\W_\BQ=\W \otimes \BQ$. More generally, we can expand $(q^a;q)_k$ around
$m$th roots of unity and obtain operators
$D_m(\lambda^{1/m}, \th, q-\z_m) \in \W_\BQ[\lambda^{1/m},\z_m][\![q-\z_m]\!]$
such that
\be
\label{fm}
{\;}_{n+1}\phi_{n}(\a;\b;\lambda^{1/m},q)
\= D_m(\lambda^{1/m}, \th, q-\z_m) {\;}_{n+1}F_{n}([\a]_m;[\b]_m;\lambda)
\ee
for all positive integers $m$ coprime to the common denominator of $\a$ and
$\b$, where for a rational number $a$ with denominator coprime to $m$, we denote
\be
\label{abracket}
[a]_m=\frac{a+\langle -a\rangle}{m} \in \BQ 
\ee
where $\langle b\rangle$ is the unique integer in $[0,m)\cap\BZ$ satisfying
$b\equiv\langle b\rangle\pmod{m}$.
Note that $[a]_m\in(0,1]$ with the same denominator as $a$.


To define our cohomology classes $[\om_{\a,\b}]$, we need an additional
normalisation factor, namely 
the collection of power series for each $a,b\in\BQ$ with
$a+b\neq0$ and $m$ coprime to the denominator of $a,b$ given by
\be
B_m(a,b;q-\z_m) \= \frac{\Gamma([a]_m+[b]_m)}{\Gamma([a]_m)\Gamma([b]_m)}
\frac{(q^{a+b};q)_\infty(q;q)_\infty}{(q^{a};q)_\infty(q^{b};q)_\infty}
  \in(q-\z_m)^{-1}\BQ[\z_m][\![q-\z_m]\!]\,.
\ee
Up to the important normalization factor involving the three values of the
$\Gamma$-function one can think of $B_m(a,b;q-\z_m)$ as the series expansion
of a $q$-Beta function at roots of unity.

With this we can define\footnote{The bracket $[a]_m$ for $a\in\BQ$ and
  $m\in\BZ_{>0}$ should not be confused with the bracket defining the de
  Rham cohomology class $[\om]$ for a form $\om$.}
\be
\label{omdef}
\begin{aligned}
\om_{m,q-\z_m} &\=
\prod_{j=1}^{n}
B_{m}(\a_j,\b_j-\a_j;q-\z_m)D_m(\lambda^{1/m},\th,q-\z_m)
[\om_{[\a]_m,[\b]_m}]\\
&\;\in\; H_\dR^{n}(X/B)\otimes \BQ[\lambda^{1/m},\z_m](q-\z_m)^{-n-1}[\![q-\z_m]\!] \,.
\end{aligned}
\ee
These series are generally convergent for $|q-\z_m|_p<1$ and seem to have
valuations decreasing logarithmically to minus infinity.

\begin{theorem}
\label{thm.1}
The collection $\om_q=(\om_{m,q-\z_m})_{m\geq1,(m,N)=1} \in \calH_\nv^{n}(X/B)$.
\end{theorem}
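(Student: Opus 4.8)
The plan is to verify that the collection $\om_q=(\om_{m,q-\z_m})_{m\geq1,(m,N)=1}$ satisfies the gluing relation of Definition~\ref{def.hab}, namely that $\om_{m,q-\z_{pm}+(\z_{pm}-\z_m)}=\varphi_p\,\om_{pm,q-\z_{pm}}$ in $H^n_\dR(X/B)\otimes\BQ_p[\z_{pm}][\![q-\z_{pm}]\!]$ for all $m\geq1$ and all but finitely many primes $p$, together with the stated $p$-adic convergence. The key structural observation is that each $\om_{m,q-\z_m}$ is, by~\eqref{omdef}, a product of three ingredients applied to a fixed de Rham class: the Beta-factors $B_m(\a_j,\b_j-\a_j;q-\z_m)$, the differential operator $D_m(\lambda^{1/m},\th,q-\z_m)$, and the class $[\om_{[\a]_m,[\b]_m}]$. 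So the strategy is to establish the gluing factor-by-factor: show that each of these three pieces individually glues correctly under $\varphi_p$ when $m\to pm$, and that the operator identity~\eqref{fm} is compatible with this.

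First I would isolate the ``generating function'' version of the statement. The relations~\eqref{f1} and~\eqref{fm} say that $D_m(\lambda^{1/m},\th,q-\z_m)$ is precisely the operator intertwining the honest solution ${}_{n+1}F_n([\a]_m;[\b]_m;\lambda)$ of the Picard-Fuchs equation with the $q$-hypergeometric series ${}_{n+1}\phi_n(\a;\b;\lambda^{1/m},q)$. The latter is a single $q$-series with no dependence on $m$ beyond the substitution $\lambda\mapsto\lambda^{1/m}$; it is a proper $q$-hypergeometric series, hence its expansions at $\z_m$ and at $\z_{pm}$ are governed by the Frobenius $\varphi_p(\lambda)=\lambda^p$ (this is the mechanism by which the $q$-Pochhammer symbols $(q^a;q)_k$ glue across roots of unity — the same mechanism underlying the gluing in the Habiro ring of Definition~\ref{def.habR}, which applies since the coefficients lie in the appropriate $R_m$). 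So the core lemma to prove is: the collection $({}_{n+1}\phi_n(\a;\b;\lambda^{1/m},q))_m$, suitably interpreted, glues. This reduces to the combinatorial identity $[a]_{pm}$ vs. $[a]_m$ and the behaviour of $(q^a;q)_k$ under $q\mapsto q$ near $\z_{pm}$ versus near $\z_m$, which is exactly the content of the $q$-Pochhammer gluing already used in~\cite{GSWZ}. For the $\Gamma$-quotient in $B_m$, the factor $\Gamma([a]_m+[b]_m)/(\Gamma([a]_m)\Gamma([b]_m))$ is a constant (independent of $q$), and the Jacobi-type identity relating $[a]_m,[b]_m$ to $[a]_{pm},[b]_{pm}$ together with the Frobenius on the $q$-infinite-products is what makes the $B$-factors glue; I would verify that the $p$-adic $\Gamma$-function relation (the Gross--Koblitz / Diamond type multiplication formula) accounts for the ratio of constants, leaving a net gluing that matches.

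Second, I would handle the de Rham class itself: $[\om_{[\a]_m,[\b]_m}]$ and $[\om_{[\a]_{pm},[\b]_{pm}}]$ are both classes in the \emph{same} module $H^n_\dR(X/B)$ (the fiber bundle $X/B$ does not change with $m$), and they are related by the crystalline/rigid Frobenius $\varphi_p$ on $H^n_\dR(X/B)$ because the Frobenius acts on the explicit hypergeometric differential forms $\om(\a;\b;\lambda)$ in a way compatible with the substitution $\a\mapsto[\a]_{pm}$ vs.\ $[\a]_m$; concretely, one uses that both classes satisfy $P\om=0$ for the Picard-Fuchs operator and are characterized up to the (rank-$n$) solution space, so it suffices to match one period, which is the role of the hypergeometric solution above. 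Combining the three gluings, and using that $D_m$ has coefficients in $\W_\BQ[\lambda^{1/m},\z_m][\![q-\z_m]\!]$ so that $\varphi_p$ commutes past it correctly (here one uses $\varphi_p(\th)=\th$, i.e.\ $\th$ is Frobenius-equivariant since $\th=\lambda\pt_\lambda$ and $\varphi_p(\lambda)=\lambda^p$ gives $\th\circ\varphi_p=p\,\varphi_p\circ\th$ — so one must be slightly careful and track the factor of $p$, absorbing it into the comparison), gives the desired gluing for $\om_q$.

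The main obstacle I expect is precisely this last point: the interaction between the Frobenius twist and the differential operator $D_m(\lambda^{1/m},\th,\cdot)$. Since $\varphi_p$ scales $\th$ by $p$, the naive pushforward of $D_{pm}$ through $\varphi_p$ is not literally $D_m$ but a rescaled version, and one needs the precise form of the $q$-difference equation satisfied by ${}_{n+1}\phi_n$ (equation~\eqref{eq:deq}, referenced but not displayed in the excerpt) to see that this rescaling is exactly compensated by the corresponding rescaling hidden in the $q$-Pochhammer expansion near $\z_{pm}$. Making this compatibility precise — essentially a statement that the whole package $(X/B,P,\om,D_m,B_m)$ is ``$\varphi_p$-equivariant'' as a $q$-deformation of the hypergeometric motive — is the technical heart of the argument, and I would spend the bulk of the proof establishing it, with the $p$-adic convergence and the logarithmic denominator bounds following from the explicit shape of $B_m$ (its simple pole structure $(q-\z_m)^{-1}$) and of $D_m$ (whose coefficients are polynomial in $k=\th$, hence contribute only the standard $\log_p$-type denominators coming from $\log q/(q-\z_m)$).
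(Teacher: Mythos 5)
Your plan addresses only the gluing and treats convergence as an afterthought, but convergence is the first and hardest step, and your stated reason for it is wrong. You claim the $p$-adic convergence follows because the coefficients of $D_m$ are ``polynomial in $k=\th$, hence contribute only the standard $\log_p$-type denominators coming from $\log q/(q-\z_m)$.'' In fact the coefficients of $D_m(\lambda^{1/m},\th,q-\z_m)$ (equivalently, the $q-\z_m$ expansion coming from Lemma~\ref{lem.q1} and Lemma~\ref{lem:qpzm}) have \emph{factorially} growing denominators — this is exactly the phenomenon flagged in the introduction and seen in the series $\Psi(\hbar)$ — so the series $\om_{m,q-\z_m}$ cannot be re-expanded at $\z_{pm}$ by any naive estimate, and without that re-expansion the gluing statement does not even make sense. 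The paper's proof spends its first half precisely here: one divides $D_m$ on the right by the Picard--Fuchs operator $P$, $D_m=QP+D_m^{\rem}$, so that only the order-$\le n$ remainder acts on the class; one then recovers the $n+1$ coefficient functions $D_m^{\rem,i}(\lambda,q-\z_m)$ by evaluating against the \emph{full} Frobenius-method basis $U(\lambda)$ of solutions and inverting, and the convergence of $\log(q)^{n+1}D_m^{\rem,i}$ on $|q-\z_m|_p<1$ is deduced from the holomorphy of the normalised solutions $C_\rho(\a;\b,q)$ (Lemma~\ref{lem:anal}, Theorem~\ref{thm:bounds.on.denom}). Nothing in your proposal substitutes for this step.

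On the gluing itself your factor-by-factor scheme is in the right spirit — the $B_m$-factors do glue via the $p$-adic $\Gamma$-function (Theorem~\ref{thm.beta}), and the $q$-series ${}_{n+1}\phi_n$ is indeed one universal object with $\lambda^{1/pm}\mapsto\lambda^{1/m}$ under Frobenius — but the step where you match the de Rham classes $[\om_{[\a]_m,[\b]_m}]$ and $[\om_{[\a]_{pm},[\b]_{pm}}]$ is not established: ``both satisfy $P\om=0$, so it suffices to match one period'' does not pin down a class in a rank-$(n+1)$ solution space, and the Frobenius--$\th$ commutation problem you correctly identify as the technical heart is left unresolved. The paper avoids both issues at once by base-changing along $H^n_\dR(X/B)\otimes\BQ\hookrightarrow H^n_\dR(X/B)\otimes\BQ(\!(\lambda)\!)\cong H^n_\dR(X_N^n\times\widehat B/\widehat B)$, a Frobenius-compatible injection under which $\om_{m,q-\z_m}$ becomes ${}_{n+1}\phi_n(\a;\b;\lambda^{1/m},q)$ times a fixed tensor product of Fermat-curve classes; the Frobenius action on those classes is given by Coleman's explicit $p$-adic $\Gamma$ formula~\eqref{eq:colegam}, which is exactly what the $B$-normalisation in~\eqref{omdef} compensates. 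You would need either this reduction or an equally explicit computation of $\varphi_p$ on the hypergeometric classes to close your argument.
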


\begin{theorem}
\label{thm.1.2}
The form $\om_q$ generates a $q$-holonomic submodule of
$\calH_\nv^{n}(X/B)$ annihilated by the $q$-PF equation ~\eqref{eq:deq}.
\end{theorem}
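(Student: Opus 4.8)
The plan is to track the $q$-holonomicity through the three-step construction \eqref{squigs} and to identify the annihilating $q$-difference operator. The starting point is that ${}_{n+1}\phi_{n}(\a;\b;\lambda,q)$ is $q$-holonomic as a function of $\lambda$: it is a proper $q$-hypergeometric series in the sense of Wilf--Zeilberger, so by the $q$-analogue of their creative-telescoping theorem it satisfies a linear $q$-difference equation in $\lambda$ with coefficients in $\BZ[\lambda,q]$; call the monic (after clearing denominators) generator of its annihilator ideal $\widehat P = \widehat P(\lambda, q, Q) \in \W_q := \BZ[\lambda,q]\langle Q\rangle/(Q\lambda - q\lambda Q)$, where $Q$ is the $q$-shift $\lambda \mapsto q\lambda$. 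Equivalently one may present $\widehat P$ in terms of $\th_q = (Q-1)/(q-1)$ so that it visibly reduces to the Picard--Fuchs operator $P$ of \eqref{eq:de2} as $q\to1$; this is the operator called $\widehat{\mathrm{PF}}$ in \eqref{eq:deq}. Concretely, from \eqref{fm} the operator is $\widehat P = D_m(\lambda^{1/m},\th,q-\z_m)\, P\, D_m(\lambda^{1/m},\th,q-\z_m)^{-1}$ acting on the image, but for the module statement it is cleaner to use the global operator annihilating \eqref{fabq} itself.

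Next I would transport this annihilation statement onto the cohomology side. By construction \eqref{omdef}, the component $\om_{m,q-\z_m}$ is obtained from the de Rham class $[\om_{[\a]_m,[\b]_m}]$ by applying the operator $D_m(\lambda^{1/m},\th,q-\z_m)$ (times the scalar $q$-Beta normalization $\prod_j B_m(\a_j,\b_j-\a_j;q-\z_m)$, which does not depend on $\lambda$ and hence commutes with $Q$). Since $[\om_{[\a]_m,[\b]_m}]$ is annihilated by the classical Picard--Fuchs operator $P$ under the Gauss--Manin connection, and since $D_m$ intertwines $P$ with $\widehat P$ by the very computation \eqref{fm} carried out at the level of operators in $\W_\BQ[\lambda^{1/m},\z_m][\![q-\z_m]\!]$, we get $\widehat P\, \om_{m,q-\z_m} = 0$ componentwise. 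The only subtlety is that the $q$-shift operator $Q$ must be made to act on a collection $(\om_{m,q-\z_m})_m$ compatibly with the gluing maps $\varphi_p$; but $Q:\lambda\mapsto q\lambda$ commutes with the Frobenius twist (which raises $\lambda^{1/m}$ to a power and acts on coefficients), so $Q$ descends to an endomorphism of $\calH_\nv^n(X/B)$, and $\widehat P \in \W_q$ acts $\BZ[q]$-linearly on it. Hence $\widehat P \,\om_q = 0$ as an element of $\calH_\nv^n(X/B)$.

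Finally, to conclude that $\om_q$ \emph{generates} a $q$-holonomic submodule, I would argue that the cyclic $\BZ[\lambda,q]\langle Q\rangle$-module generated by $\om_q$ is a quotient of $\W_q/\W_q\widehat P$, which is $q$-holonomic (of rank equal to $\deg_Q \widehat P$) because $\widehat P$ is a nonzero operator of finite order with nonzero leading and trailing coefficients; a submodule of the fixed finitely-generated $\calH_R$-module $\calH_\nv^n(X/B)$ that is also finitely generated over $\BZ[\lambda,q]\langle Q\rangle$ is by definition $q$-holonomic, and the Bernstein-type dimension bound is inherited from the quotient presentation. The main obstacle I anticipate is purely one of bookkeeping rather than of substance: verifying that the operator identity \eqref{fm} — derived as an identity of formal series in $(q-\z_m)$ applied to the one solution ${}_{n+1}F_n$ — actually holds as an identity of operators acting on \emph{all} of $H^n_\dR(X/B)\otimes\BQ[\lambda^{1/m},\z_m][\![q-\z_m]\!]$ via Gauss--Manin, so that $D_m$ genuinely conjugates $P$ to $\widehat P$ and not merely agrees with it on the cyclic vector. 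This follows because $P$ generates the full annihilator of ${}_{n+1}F_n$ in $\W_\BQ$ (the hypergeometric differential equation is irreducible for generic, and here disjoint, parameters under assumption (A)), so any operator vanishing on ${}_{n+1}F_n$ is a left multiple of $P$, and this rigidity upgrades the scalar identity to the operator identity. Checking irreducibility (or at least that $P$ is the minimal operator) for the specific parameter range of assumption (A) is the one place where a genuine, if standard, argument is needed.
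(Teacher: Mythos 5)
Your proposal is correct in outline and arrives at exactly the factorization on which the paper's proof rests (Lemma~\ref{lem:qdif}: the $q$-PF operator of \eqref{eq:deq} composed with $D_m$ equals $Q_mP_m$), but you reach that factorization by a genuinely different key step. The paper never invokes irreducibility: the operators $D_m$ are constructed in Lemma~\ref{lem:Dexists} uniformly on the whole Frobenius basis ${}_{n+1}F_n(\a+\rho;\b+\rho;\lambda)\lambda^{\rho}$, so the composite annihilates a full fundamental system of the order-$(n{+}1)$ operator $P_m$; its right remainder under division by $P_m$ has order at most $n$ and kills $n{+}1$ independent solutions, hence vanishes. Then $\widehat P\,\om_q=\prod_jB_m\cdot Q_mP_m[\om]=0$ because $B_m$ is $\lambda$-independent and $P_m$ annihilates the de Rham class via Gauss--Manin, and $q$-holonomicity of the cyclic module follows as you say. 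Your route instead uses only the single scalar identity \eqref{fm} for the distinguished holomorphic solution and upgrades it by rigidity of the annihilator; this works, but it genuinely needs the extra inputs that $P$ generates the full annihilator of ${}_{n+1}F_n$ (irreducibility of the generalized hypergeometric operator, i.e.\ $\a_i-\b_j\notin\BZ$, which assumption (A) gives and which the bracket $[\cdot]_m$ preserves), and that this minimality persists over the enlarged coefficient ring containing $\z_m$ and $\lambda^{1/m}$ at level $m$ (extension of constants and Kummer pullback along $\lambda\mapsto\lambda^m$ could in principle create lower-order relations; here they do not, by a deck-transformation/Puiseux argument, but this is exactly the step you defer as ``standard''). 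So your argument buys independence from the full Frobenius basis --- it would still apply if $D_m$ were only known on the cyclic vector, e.g.\ for the twisted series of the remark following Theorem~\ref{thm.1} --- at the price of an irreducibility/minimality input that the paper's solution-counting argument avoids entirely. Two cosmetic points: $D_m$ need not be invertible, so drop the formula $\widehat P=D_mPD_m^{-1}$ in favour of the one-sided factorization $\widehat P D_m=Q_mP_m$; and it is \eqref{eq:deq} itself, not a minimal WZ-telescoped operator, that the theorem asserts as annihilator, which your argument does deliver since ${}_{n+1}\phi_n$ visibly satisfies \eqref{eq:deq}.
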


The proof of these theorems is given in Section ~\ref{sec.method1} in three steps.
The first involves proving
that the power series $\om_{m,q-\z_m}$ is meromorphic on $|q|_p=1$ (with potential
poles at roots of unity of order at most $n+1$),
and hence can be re-expanded at $\z_{pm}$. This is by no means obvious since the
operators $\om_{m,q-\z_m}$ have factorially growing denominators. To achieve our
goal, we use a division of these operators by the Picard-Fuchs operator $P$, and
an explicit basis of solutions. The second step involves proving that the
expansions of $\om_{m,q-\z_m}$ and $\om_{m,q-\z_{pm}}$ agree, after a
Frobenius twist, which uses the tensor with $\BZ[\![\lambda]\!]$.
The second theorem follows form computations with the full basis of solutions in
$P$ showing that the $q$-difference operator multiplying the defining operators
has a right factor of $P$.
\begin{remark}

The proof of Theorem~\ref{thm.1} does not use the $q$-holonomicity of ${}_{n+1}\phi_n$
as a function of $\lambda$. Therefore, we could define a class in exactly the same
way we did when we started with ${}_{n+1}\phi_n$ by replacing it by
\be
  \sum_{k\in\BZ_{\geq0}}
  q^{k^{d}}\frac{(q^{\a_1};q)_k\cdots(q^{\a_{n+1}};q)_k}
  {(q^{\b_1};q)_k\cdots(q^{\b_{n+1}};q)_k}
  \lambda^{k}
\ee
for any $d\in\BZ$ for example. These examples for $k>2$ will not generally be
$q$-holonomic over $\BZ[q,\lambda]$.
\end{remark}


\begin{remark}
\label{rem.noPF}
Although the input data for the explicit classes in Theorem~\ref{thm.1} are geometric,
and hence the corresponding Picard-Fuchs operators are regular singular, this plays
no role in the proof of Theorem~\ref{thm.1}. Neither does the maximal unipotent
monodromy assumption on the Picard-Fuchs operator associated to a Calabi--Yau manifold. 
\end{remark}

\begin{remark}
\label{rem.hasse}
The constant terms
\be
  \om_{m,0}
  \=
  \sum_{k=0}^{p-1}
\prod_{j=1}^{n+1}
\frac{(\z_m^{\a_j};\z_m)_{k}}{(\z_m^{\b_j};\z_m)_{k}}
\lambda^{k/m}\om(\a;\b;\lambda)\,,
\ee
recover the Hasse polynomial modulo $p$ by the formula
\be
\Big(
\sum_{k=0}^{p-1}
\prod_{j=1}^{n+1}
\frac{(\z_p^{\a_j};\z_p)_{k}}{(\z_p^{\b_j};\z_p)_{k}}
\lambda^{k/p}\Big)^{p}
\;\equiv\;
\sum_{k=0}^{p-1}
\prod_{j=1}^{n+1}
\frac{(\a_j)_{k}}{(\b_j)_{k}}
\lambda^{k}\pmod{p}\,.
\ee
\end{remark}

\begin{remark}
\label{rem.factorials}
A special case of hypergeometric motives that depend on two vectors
\newline
$a=(a_1,\dots,a_K)$ and $b=(b_1,\dots,b_L)$ of positive integers that satisfy the
balancing condition
\be
\sum_{i=1}^K a_i = \sum_{j=1}^L b_j=n+1
\ee
was considered in~\cite{RV:hyper}. The corresponding hypergeometric series whose
summand is a ratio of products of factorials is given by
\be
\label{Fla}
f_{a,b}(\lambda) \=
\sum_{k=0}^{\infty} \frac{\prod_{i=1}^{K} (a_ik)!}{\prod_{j=1}^{L} (b_j k)!}
  \lambda^k
\=
{}_{n+1}F_{n}
(\a;\b;C \lambda)
\in \BZ[\![\lambda]\!]
\,,
\ee
and
\be
\label{a2alpha}
\a\=\Big(\frac{1}{a_1},\frac{2}{a_1},\cdots,\frac{a_1}{a_1},\cdots\Big)\,,\quad
  \b\=\Big(\frac{1}{b_1},\frac{2}{b_1},\cdots,\frac{b_1}{b_1},\cdots\Big)\,,\quad
  C\=\frac{\prod_ia_i^{a_i}}{\prod_jb_j^{b_j}}.
\ee
The $q$-hypergeometric series is
\be
\label{Flaq}
f_{a,b}(\lambda,q) \=
\sum_{k=0}^{\infty} \frac{\prod_i (q;q)_{a_i k}}{\prod_j (q;q)_{b_j k}} \lambda^k
\in \BZ[q^{\pm 1}][\![\lambda]\!] \,,
\ee

The identity
\be
  \prod_{j=1}^{N-1}\Gamma(\tfrac{j}{N})
  \=
  \sqrt{\frac{(2\pi)^{N-1}}{N}}
\ee
implies that 
\be
  \prod_{j=1}^{N-1}\frac{(q^{j/N};q)_\infty}{(q;q)_\infty}
\ee
are units in the Habiro ring of the quadratic field $\BQ(\sqrt{N})$
(see remark~\ref{rem:gam.exp.per} below).
Hence, in the examples of factorial ratios, $\om$ can be defined without the
need of the normalisation in Equation~\eqref{omdef}.

%
\end{remark}


\subsection{Elements in the Habiro ring of an \'etale $\BZ[t]$-algebra}
\label{sub.exphab}

A second method to define explicit elements in the Habiro cohomology of smooth
maps $X/B$ will be via a push-forward from the Habiro ring $\calH_{R/\BZ[t]}$
of an \'etale $\BZ[t]$-algebra $R$. In this section we give two constructions of
explicit elements of the said Habiro ring.
Both constructions have hypergeometric origin, and the first one produces elements
in the Habiro ring $\calH_{R/\BZ[\lambda]}$ for $R$ a localisation of $\BZ[\lambda]$
(hence the corresponding power series for $q$ near a root of unit have coefficients
rational functions), whereas the second for $R$ a finite \'etale of a localisation
of $\BZ[\lambda]$ (hence the corresponding power series for $q$ near a root of unit
have coefficients algebraic functions). 

The first construction starts with a rational function
\be
\label{fcalA}
f_{\calA,Q}(t) \= \frac{1}{1- \sum_{\a \in \calA} (-1)^{Q_{\a\a}}t_\a} \= \sum_{k \in \BZ_{\geq 0}^\calA}(-1)^{\diag(Q)k}
\frac{(\sum_\a k_\a)!}{\prod_\a (k_\a)!} \prod_\a t_\a^{k_\a}
\ee
in a finite set $\calA$ of variables $t_\a$, where $t=(t_\a)_{\a \in \calA}$,
$k=(k_\a)_{\a \in \calA}$ and $Q$ is an integral symetric matrix (which is currently all redundant expect for the diagonal modulo two). Replacing the factorials with $q$-factorials and adding a quadratic from, we obtain
a $q$-deformation $f(t,q)$ of $f(t)$ defined by
\be
\label{fcalAq}
f_{\calA,Q}(t,q) \=  \sum_{k \in \BZ_{\geq 0}^\calA}
(-1)^{\diag(Q)k}q^{\frac{1}{2}k^{t}Qk+\frac{1}{2}\diag(Q)k}
\frac{(q;q)_{\sum_\a k_a}}{\prod_\a (q;q)_{k_\a}} \prod_\a t_\a^{k_\a} \,.
\ee

\begin{theorem}
\label{thm.ct2}
For every finite set $\calA$ and $Q$ integral symmetric matrix indexed by $\calA$, we have 
$f_{\calA,Q}(t,q) \in \calH_{\BZ[t][f_\calA(t)]/\BZ[t]}$.
\end{theorem}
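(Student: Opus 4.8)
The plan is to realise $f_{\calA,Q}(t,q)$ as the collection $(f_m)_{m\ge1}$ whose $m$-th component $f_m(q-\z_m)$ is the Taylor expansion at $q=\z_m$ of the series $f_{\calA,Q}(t^{1/m},q)$ obtained from~\eqref{fcalAq} by the substitution $t_\alpha\mapsto t_\alpha^{1/m}$, and then to verify the two conditions of Definition~\ref{def.habR} for it. Write $w=\sum_{\alpha\in\calA}(-1)^{Q_{\alpha\alpha}}t_\alpha$, so that $f_\calA(t)=(1-w)^{-1}$, $R=\BZ[t]_{1-w}$ and $R_m=\BZ[t^{1/m},\z_m]_{1-w}$. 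First a preliminary: $k^{t}Qk\equiv\diag(Q)k\pmod2$ by symmetry of $Q$, so the exponent $\tfrac12(k^{t}Qk+\diag(Q)k)$ in~\eqref{fcalAq} is an integer; combined with the fact that the Gaussian multinomial $(q;q)_{|k|}/\prod_\alpha(q;q)_{k_\alpha}$ is a polynomial in $q$ with coefficients in $\BZ_{\ge0}$, this shows that each summand $c_k(q)$ of~\eqref{fcalAq} lies in $\BZ[q^{\pm1}]$ and has a terminating expansion at every $\z_m$ with coefficients in $\BZ[\z_m]$. Hence $f_m$ is a priori a well-defined element of $\BZ[\z_m][\![t^{1/m}]\!][\![q-\z_m]\!]$, and since $f_{\calA,Q}(t,1)=(1-w)^{-1}$ the component $f_1$ is the expansion at $q=1$ of~\eqref{fcalAq} itself, with coefficients in $R$, so the collection genuinely represents~\eqref{fcalAq}; the substance lies in the behaviour at $m>1$ and in the gluing.

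For the membership $f_m\in R_m[\![q-\z_m]\!]$ I would resum $f_{\calA,Q}(t^{1/m},q)$ near $\z_m$ cyclotomically, in the manner of~\cite{GSWZ}. Writing $k_\alpha=a_\alpha m+r_\alpha$ with $0\le r_\alpha<m$ and sorting the factors $1-q^{j}$ of each $q$-Pochhammer by the class of $j$ modulo $m$ gives an exact factorisation $(q;q)_{bm+s}=(1-q^m)^{b}V_{b,s}(q)$ with $V_{b,s}$ a Laurent polynomial and $V_{b,s}(\z_m)\ne0$, together with the $q$-Lucas identity for the Gaussian multinomial. Feeding these into~\eqref{fcalAq} and collecting by the residue vector $r=(r_\alpha)$ rewrites $f_{\calA,Q}(t^{1/m},q)$ as a \emph{finite} $\BZ[\z_m]$-linear combination, over $r\in\{0,\dots,m-1\}^{\calA}$, of $\prod_\alpha t_\alpha^{r_\alpha/m}$ times an inner sum over $a\in\BZ_{\ge0}^{\calA}$ of $\prod_\alpha t_\alpha^{a_\alpha}$ against a $q$-hypergeometric term in $a$, all multiplied by the carry factor $(1-q^m)^{\lfloor|r|/m\rfloor}$ and by the Gaussian prefactor $q^{\mathrm{quad}(k)}$ (which is harmless: near $\z_m$ it depends on $k$ only through $k\bmod m$ up to higher order in $q-\z_m$). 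The decisive point is that at $q=\z_m$ this inner sum becomes, up to a $\BZ[\z_m]$-constant depending only on $r$, the geometric series $\sum_a\binom{|a|}{a}\prod_\alpha((-1)^{Q_{\alpha\alpha}}t_\alpha)^{a_\alpha}=(1-w)^{-1}$, whose denominator is exactly the element $1-w$ inverted in $R_m$ — precisely because the substitution $t_\alpha\mapsto t_\alpha^{1/m}$ turns $\prod_\alpha(t_\alpha^{1/m})^{a_\alpha m}$ into the integer powers $\prod_\alpha t_\alpha^{a_\alpha}$ in the inner sum. The higher $(q-\z_m)$-coefficients are then controlled by differentiating the linear $q$-difference equation satisfied by the inner sum, which propagates the denominators as powers of $1-w$; along the way one checks that the resulting $\BZ[\z_m]$-coefficients have $p$-adic denominators growing at most logarithmically in the power of $q-\z_m$ (the source of the $\log q/(q-\z_m)$ phenomenon noted after Definition~\ref{def.hab}), so that the termwise summation over $a$ is legitimate. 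This gives $f_m\in R_m[\![q-\z_m]\!]$.

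For the gluing $f_m\bigl(q-\z_{pm}+(\z_{pm}-\z_m)\bigr)=\varphi_p f_{pm}(q-\z_{pm})$: since $f_m$ has coefficients in $R_m$ with logarithmically-bounded $p$-adic denominators and $v_p(\z_{pm}-\z_m)>0$, its re-expansion around $\z_{pm}$ converges in $(R_{pm})^{\wedge}_p[\![q-\z_{pm}]\!]$ and equals the expansion of $f_{\calA,Q}(t^{1/m},q)$ at $\z_{pm}$; likewise $f_{pm}$ is the expansion of $f_{\calA,Q}(t^{1/(pm)},q)$ at $\z_{pm}$. Applying the mod-$pm$ version of the resummation above to both puts the two sides into a common shape — a finite sum of residue monomials $\prod_\alpha t_\alpha^{r_\alpha/m}$ times $q^{pm}$-series — with inner denominators $1-\sum_\alpha(-1)^{Q_{\alpha\alpha}}t_\alpha^{p}$ and $1-w$ respectively, which $\varphi_p$ interchanges since $\sum_\alpha(-1)^{Q_{\alpha\alpha}}t_\alpha^{p}\equiv\bigl(\sum_\alpha(-1)^{Q_{\alpha\alpha}}t_\alpha\bigr)^{p}\pmod p$. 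Identifying the remaining coefficients is then a Dwork-type congruence for $q$-Pochhammer symbols — the $q$-analogue of $\binom{pn}{pk}\equiv\binom nk\pmod p$, obtained from $(q;q)_{p\ell}=(q^{p};q^{p})_{\ell}\prod_{1\le j\le p\ell,\,p\nmid j}(1-q^{j})$ — evaluated near $\z_{pm}$, which makes the unit factors $V$ and the residue multinomials $\binom{\,\cdot\,}{\,\cdot\,}_{\z_{pm}}$ Frobenius-compatible.

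I expect the main obstacle to be the second paragraph: pushing the cyclotomic resummation through while controlling two kinds of denominators at once — the $t^{1/m}$-denominators, which must collapse to powers of $1-w$ (this uses crucially the substitution $t_\alpha\mapsto t_\alpha^{1/m}$ and the evaluation $f_{\calA,Q}(t,1)=f_\calA(t)$), and the $p$-adic denominators of the $(q-\z_m)$-expansion, which must stay logarithmically bounded — together with the bookkeeping of the carry terms $\lfloor|r|/m\rfloor$ and of the unit factors $V_{b,s}$ carefully enough that no spurious power of $m$ survives, the $m$-powers hidden in $(1-q^m)^{\lfloor|r|/m\rfloor}$ exactly cancelling those appearing through $1/\prod_\alpha V_{a_\alpha,r_\alpha}$. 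By contrast the quadratic exponent $q^{\mathrm{quad}(k)}$ plays no essential role — as the remark following Theorem~\ref{thm.1.2} already indicates, any polynomial exponent would do — and simply rides along.
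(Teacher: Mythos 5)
Your skeleton (expand at each $\z_m$, show the coefficients lie in $R_m$, then glue $p$-adically) is the paper's, and your cyclotomic resummation does reproduce the paper's constant-term computation at $q=\z_m$ (the congruence-sum/$q$-Lucas evaluation giving $(1-w)^{-1}$ times a finite sum of residue multinomials). But there is a genuine gap exactly where you flag difficulty: integrality of the higher $(q-\z_m)$-coefficients. Definition~\ref{def.habR} requires the coefficients to lie in $R_m=R[t^{1/m},\z_m]$ on the nose, with no denominators. Expanding the unit factors $V_{a,r}(q)$ beyond leading order at $\z_m$ produces $a$-dependent quantities with genuine rational (factorial/Bernoulli-type) denominators, so termwise your resummation only gives coefficients in $R_m\otimes\BQ$; your own concession that the coefficients have logarithmically growing $p$-adic denominators is both inconsistent with your stated conclusion $f_m\in R_m[\![q-\z_m]\!]$ and insufficient for membership in the Habiro ring (the logarithmic-denominator regime is that of the module $\calH(V)$ of Definition~\ref{def.hab}, not of Definition~\ref{def.habR}). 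The paper closes this gap by a step your proposal has no counterpart of: the expansion coefficients lie a priori in $R_m\otimes\BQ$ (differential-operator form of the expansion of $(q;q)_k$ at roots of unity), the full series lies in $\BZ[q^{\pm1},\z_m][\![t^{1/m}]\!]$ because the Gaussian multinomials are in $\BZ[q]$, and the intersection statement $R\otimes\BQ\cap\BZ[\![t]\!]=R$ (Equation~\eqref{RQ1}, compare \cite[Eqn.(169)]{GSWZ}) upgrades rational to integral coefficients. Without this (or a genuinely termwise-integral resummation, which the factorial denominators obstruct) the membership is not proved.

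The gluing step is also essentially asserted rather than proved. What is needed is an identity of full power series in $(R_{pm})^\wedge_p[\![q-\z_{pm}]\!]$, i.e.\ a congruence to all orders in $q-\z_{pm}$ after $p$-completion; the $q$-analogue of $\binom{pn}{pk}\equiv\binom{n}{k}\pmod p$ and the observation $\sum_\a t_\a^p\equiv(\sum_\a t_\a)^p\pmod p$ only address the leading term, and making the unit factors $V$ and residue multinomials ``Frobenius-compatible'' to all orders is the entire difficulty, for which your sketch gives no mechanism. The paper avoids this altogether: $f_{\calA,Q}$ is proper $q$-hypergeometric and satisfies the explicit system of linear $q$-difference equations~\eqref{fhsys}, which has a unique power-series solution in $t$ with initial condition $1+\mathrm{O}(t)$; the $p$-adic re-expansion of $f_m$ at $\z_{pm}$ and $\varphi_p f_{pm}$ both satisfy this system with that initial condition (after undoing $t^{1/m}$ via Frobenius), hence coincide. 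You invoke the $q$-difference equation only to bound denominators, never as this uniqueness statement; using it would replace your hardest unproved step with a short argument. (Your convergence remark for the re-expansion is fine once integrality of the coefficients is in hand, since $|\z_{pm}-\z_m|_p<1$.)
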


We can then define a map $\Lambda$-rings
\be
\label{2lambda}
\BZ[t] \to \BZ[x^{\pm1},\lambda], \qquad
t_\a \mapsto x^\a \lambda:= ( \prod_{i=1}^{n+1} x_i^{\a_i} )\lambda
\ee
where $\a=(\a_1,\dots,\a_{n+1})$ and $x^\a=x_1^{\a_1} \dots x_{n+1}^{\a_{n+1}}$
which sends $f_\calA(t)$ to $(1-\lambda h_\calA(x))^{-1}$ where
\be
\label{hA}
h_{\calA,Q}(x) \= \sum_{\a \in \calA}(-1)^{Q_{\a\a}}x^\a \in \BZ[x^{\pm 1}] 
\ee
and sends $f_{\calA,Q}(t,q)$ to
\be
f_{\calA,Q}(x,\lambda,q) \;=\!\!  \sum_{k \in \BZ_{\geq 0}^\calA}
(-1)^{\diag(Q)k}q^{\frac{1}{2}k^{t}Qk+\frac{1}{2}\diag(Q)k}
\frac{(q;q)_{\sum_\a k_a}}{\prod_\a (q;q)_{k_\a}} \prod_\a x_\a^{k_\a \a}
\lambda^{\sum_\a k_\a} \in \BZ[q,x^{\pm1}][\![\lambda]\!] \,.
\ee

Theorem~\ref{thm.ct2} and the functoriality of the Habiro ring under the map
~\eqref{2lambda} imply the following:

\begin{theorem}
\label{thm.ct}
The collection
$f_{\calA,Q}(x,\lambda,q) \in
\calH_{\BZ[x,\lambda][\tfrac{1}{1-\lambda h_{\calA,Q}(x)}]/\BZ[x,\lambda]}$.
\end{theorem}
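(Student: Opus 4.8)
The plan is to deduce Theorem~\ref{thm.ct} as a formal consequence of Theorem~\ref{thm.ct2} together with the functoriality of the Habiro ring construction $R\mapsto\calH_R$ under morphisms of \'etale $\Lambda$-algebras: such a morphism induces, compatibly in $m$, maps of the rings $R_m=R[\lambda^{1/m},\z_m]$ that intertwine the Adams operations (hence the Frobenius endomorphisms $\varphi_p$ on $p$-adic completions), and therefore carries the gluing relations of Definition~\ref{def.habR} to gluing relations. I would carry this out in four steps: (i)~verify that~\eqref{2lambda} is a morphism of $\Lambda$-rings which is moreover compatible with root extraction; (ii)~identify the target \'etale algebra of Theorem~\ref{thm.ct} as the base change along~\eqref{2lambda} of the source \'etale algebra of Theorem~\ref{thm.ct2}; (iii)~apply functoriality to push the class of Theorem~\ref{thm.ct2} forward; (iv)~compute the push-forward and recognise it as $f_{\calA,Q}(x,\lambda,q)$.

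For step~(i), the Adams operations on $\BZ[t]$ and on $\BZ[x^{\pm1},\lambda]$ act monomially, so for every $m\ge1$ we have $\psi_m(t_\a)=t_\a^m\mapsto (x^\a\lambda)^m=x^{m\a}\lambda^m=\psi_m(x^\a\lambda)$; hence~\eqref{2lambda} commutes with all $\psi_m$, and the induced maps on $p$-adic completions intertwine the $\varphi_p$. The same monomial shape shows that root extraction is respected: $t_\a^{1/m}\mapsto\bigl(\prod_i(x_i^{1/m})^{\a_i}\bigr)\lambda^{1/m}\in\BZ[x^{\pm1/m},\lambda^{1/m},\z_m]$, so~\eqref{2lambda} extends to compatible maps $(\BZ[t])_m\to(\BZ[x^{\pm1},\lambda])_m$. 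For step~(ii), localisation commutes with base change and $1-\sum_{\a\in\calA}(-1)^{Q_{\a\a}}t_\a$ maps to $1-\sum_{\a\in\calA}(-1)^{Q_{\a\a}}x^\a\lambda=1-\lambda h_{\calA,Q}(x)$ by~\eqref{hA}, so
\[
\BZ[x^{\pm1},\lambda]\otimes_{\BZ[t]}\BZ[t][f_{\calA}(t)]\;\cong\;\BZ[x^{\pm1},\lambda]\Bigl[\tfrac{1}{1-\lambda h_{\calA,Q}(x)}\Bigr],
\]
which is a localisation of $\BZ[x,\lambda]$, hence \'etale over it; this is the Habiro ring appearing in Theorem~\ref{thm.ct} (after the harmless localisation inverting the $x_i$).

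For steps~(iii)--(iv), Theorem~\ref{thm.ct2} gives $f_{\calA,Q}(t,q)=(f_m(q-\z_m))_{m\ge1}\in\calH_{\BZ[t][f_\calA(t)]/\BZ[t]}$, and by~(i)--(ii) the morphism~\eqref{2lambda} induces a push-forward $\calH_{\BZ[t][f_\calA(t)]/\BZ[t]}\to\calH_{\BZ[x^{\pm1},\lambda][1/(1-\lambda h_{\calA,Q}(x))]/\BZ[x^{\pm1},\lambda]}$ given by applying $t_\a\mapsto x^\a\lambda$ to the coefficients of each $f_m$. Applying this substitution to the monomial $\prod_\a t_\a^{k_\a}$ in the global $q$-hypergeometric series~\eqref{fcalAq} yields $\prod_\a(x^\a)^{k_\a}\lambda^{\sum_\a k_\a}$, so the image collection is precisely the system of root-of-unity expansions of $f_{\calA,Q}(x,\lambda,q)$; and since applying a ring homomorphism to coefficients cannot worsen $p$-adic valuations, the convergence condition of Definition~\ref{def.habR} is preserved. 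This gives the theorem.

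The genuinely substantive input is Theorem~\ref{thm.ct2}; the only real work here is the bookkeeping in steps~(i)--(ii), namely checking that the substitution $t_\a\mapsto x^\a\lambda$ is compatible with each of the three structures entering the definition of $\calH_R$ --- the variables together with their $m$-th roots, the Adams operations and their Frobenii, and the gluing --- which I expect to be the main (but routine) point to get right.
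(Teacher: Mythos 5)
Your proposal is correct and follows exactly the paper's route: the paper derives Theorem~\ref{thm.ct} in one line from Theorem~\ref{thm.ct2} together with functoriality of the Habiro ring under the $\Lambda$-ring map~\eqref{2lambda}, $t_\a\mapsto x^\a\lambda$, which is precisely your argument, with your steps~(i)--(iv) supplying the routine bookkeeping the paper leaves implicit.
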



The second construction depends on a symmetric $N \times N$ matrix $A$ with integer
entries. The constructed elements are symmetrised Nahm sums, and our aim is to
give a self-contained definition that they are explcit elements of the
corresponding Habiro ring avoiding the use of formal Gaussian
integration and the residue formulas with respect to auxillary variables as was done
in~\cite[Sec.4.1]{GSWZ}. 

Recall the Nahm sum $f_A(t,q)$ associated to a symmetric, integral $N \times N$
matrix $A$

\be
\label{fAdef}
f_A(t,q) \= \sum_{n \in \BZ^N_{\ge0}} \frac{
(-1)^{\mathrm{diag}(A)\cdot n}\,
q^{\frac{1}{2}(n^t A n + \mathrm{diag}(A)\cdot n)} }{(q;q)_{n_1}
\dots (q;q)_{n_N}} t_1^{n_1} \dots t_N^{n_N} \in \BQ(q)[\![t]\!]
\ee
where $n=(n_1,\dots,n_N)$ and $t=(t_1,\dots,t_N)$.

The matrix $A$ defines system of (so-called Nahm) equations
\be
\label{XA}
1-z_i - (-1)^{A_{ii}}t_i \prod_{j=1}^{N}z_j^{A_{ij}} \= 0, \qquad i=1,\dots, N 
\ee
which we abbreviate by $1-z-(-1)^A t z^A$. This in turn defines an \'etale map
\be
\label{RA}
\BZ[t] \to R_A, \qquad 
\BZ[t]=\BZ[t_1,\dots,t_N], \qquad
R_A:= \BZ[t][z_1^{\pm 1},\dots,z_N^{\pm 1},\tfrac{1}{\delta}]/(1-z-(-1)^A t z^A)
\ee
where
\be
\label{deltadef}
\delta(t) \= \det\big(\diag(1-z)\,A+\diag(z)\big) \prod_{j=1}^{N}z_{j}^{-A_{jj}}\,.
\ee

Consider the symmetrisation
\be
\label{SfA}
\Sf_A(t,q) \= f_A(t,q) f_A(t,q^{-1}) 
\ee
of $f_A(t,q)$. Our goal is to give a self-contained proof of the following
theorem implicit from~\cite{GSWZ}.

\begin{theorem}[\cite{GSWZ}]
\label{thm.fA1} 
For every symmetric, integer $N \times N$ matrix $A$, we have:
\be
\label{Sf}
\Sf_A(t,q) \in \calH_{R_A/\BZ[t]} \,.
\ee
\end{theorem}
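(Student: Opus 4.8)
The plan is to reduce the statement to Theorem~\ref{thm.ct2} (or rather a variant of it in the "diagonal" trivial-$Q$ case combined with the quadratic-form refinement already present there) by writing the Nahm sum as a specialisation of the multivariate rational-function construction $f_{\calA,Q}$, and then transporting the Habiro-ring membership along the étale map $\BZ[t]\to R_A$ via functoriality of $\calH_{(-)/\BZ[t]}$. First I would set $\calA=\{1,\dots,N\}$, take $Q=A$, and observe that $f_{A,Q}(t,q)$ from~\eqref{fcalAq} is precisely the "one-variable-per-index" $q$-deformation governed by the quadratic form $A$; the only discrepancy between $f_{A,Q}(t,q)$ and the Nahm sum $f_A(t,q)$ of~\eqref{fAdef} is the combinatorial coefficient $\frac{(q;q)_{\sum k_\a}}{\prod_\a (q;q)_{k_\a}}$ versus $\frac{1}{\prod_i (q;q)_{n_i}}$. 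So the genuine content is that replacing the multinomial $q$-coefficient by $1$ preserves membership in the Habiro ring, after pushing along $t_\a\mapsto x^\a\lambda$ and then along the étale presentation $R_A$.

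The key steps, in order, would be: (i) Use the formal Gaussian-integration / residue identity from~\cite[Sec.4.1]{GSWZ} — or, to keep things self-contained as the authors wish, a direct contour argument — expressing $\Sf_A(t,q)=f_A(t,q)f_A(t,q^{-1})$ as an iterated residue of a product of $q$-exponentials $(q x;q)_\infty/(x;q)_\infty$ and their $q\mapsto q^{-1}$ counterparts. (ii) Recognise each factor $\frac{(qx;q)_\infty}{(x;q)_\infty}$ as (up to the $\Gamma$-normalisation discussed after the definition of $B_m(a,b;q-\z_m)$) a building block that is a unit, or at least a well-controlled meromorphic object, in the relevant Habiro ring at every root of unity; symmetrising with the $q^{-1}$ factor kills the factorially-growing denominators exactly as $\Phi(\hbar)\Phi(-\hbar)$ does in the introduction, so that the symmetrised integrand lies in $R_A{}_m[\![q-\z_m]\!]$ for each $m$. (iii) Verify the gluing condition of Definition~\ref{def.habR}: here the point is that the Nahm equations~\eqref{XA} cut out exactly the critical locus of the potential governing the saddle-point expansion, so the étale coordinates $z_i$ (and the Jacobian $\delta$ of~\eqref{deltadef}) provide the Frobenius-compatible local coordinates, and the Frobenius $\varphi_p$ on $R_A\otimes\BZ_p$ intertwines the $\z_m$- and $\z_{pm}$-expansions because $\varphi_p$ acts on the $q$-Pochhammer building blocks by the standard congruence $(q^{a};q)_{pk}\equiv (q^{pa};q^p)_k \pmod p$ after twisting. (iv) Conclude that the symmetrised residue, evaluated, is $\Sf_A(t,q)$ and lies in $\calH_{R_A/\BZ[t]}$.

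The main obstacle I expect is step (iii), the verification of the gluing/Frobenius-compatibility at \emph{all} roots of unity simultaneously, not just at $q=1$: one must show that the multinomial-to-product simplification of the $q$-coefficient is compatible with re-expansion at $\z_{pm}$, which is where the étaleness of $\BZ[t]\to R_A$ and the nonvanishing of $\delta(t)$ are genuinely used (they guarantee that the saddle points stay simple and that the local expansions glue). A secondary technical nuisance is making the residue/Gaussian-integration argument rigorous $p$-adically — i.e.\ justifying the interchange of summation and residue extraction over $\BZ_p$, and controlling the denominators introduced by the $\Gamma$-factor normalisations — but this is exactly the kind of bookkeeping that~\cite{GSWZ} already carries out, and the symmetrisation~\eqref{SfA} is precisely what makes those denominators disappear. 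Everything else (convergence for $|q-\z_m|_p<1$, the logarithmic denominator growth) then follows from the general remarks preceding~\eqref{bf}.
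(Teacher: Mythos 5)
Your proposal has two genuine gaps. First, the opening ``reduction'' to Theorem~\ref{thm.ct2} via $\calA=\{1,\dots,N\}$, $Q=A$ does not work: $f_{\calA,Q}(t,q)$ and the Nahm sum $f_A(t,q)$ differ by the numerator factor $(q;q)_{\sum_\a k_\a}$, and this difference is not cosmetic and is not removed by the substitution $t_\a\mapsto x^\a\lambda$ or by functoriality. The two series have different $q\to1$ limits (the rational function $f_\calA(t)$ versus, after symmetrisation, the algebraic function $1/\delta(t)$) and their coefficients live in different \'etale algebras ($\BZ[t][f_\calA(t)]$ versus $R_A$), so the assertion that ``replacing the multinomial $q$-coefficient by $1$ preserves membership in the Habiro ring'' is not an available lemma --- it is a restatement of the theorem to be proved. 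Second, your steps (i)--(iii) defer all the substantive work to the formal Gaussian integration and residue formulas of~\cite[Sec.4.1]{GSWZ}, which is exactly what this paper sets out to avoid; and within that route, step (ii) (``symmetrisation kills the factorially growing denominators exactly as $\Phi(\hbar)\Phi(-\hbar)$ does'') simply asserts the crux, while step (iii) offers no concrete mechanism for gluing at all roots of unity beyond a mod-$p$ Pochhammer congruence and an appeal to simplicity of saddle points.

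For comparison, the paper's proof is self-contained and runs differently: the engine is Proposition~\ref{prop.A1}, the $q$-binomial-theorem identity writing $\Sf_A(t,q)$ as a single sum of $q$-binomials, which makes $\Sf_A(t,q)\in\BZ[q^{\pm1}][\![t]\!]$ manifest. Membership of the $(q-\z_m)$-expansions in $R_m[\![q-\z_m]\!]$ is then obtained by combining (a) the identity~\eqref{deltaid} expressing $1/\delta(t)$ as the generating function of the same binomial products, so that the constant term at $\z_m$ (computed by $m$-congruence sums) lies in $\frac{1}{\delta}\BZ[t^{1/m},\z_m]$ and the higher coefficients lie in $S=\BZ[t^{\pm1}]\langle\delta(t)^{-1}\rangle\subset R_A$ via the Bernoulli-type expansions~\eqref{qq1},~\eqref{qqm}, with (b) the lattice intersection $R\otimes\BQ\cap\BZ[\![t]\!]=R$. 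Finally the gluing is proved not by Frobenius congruences on Pochhammer symbols but by uniqueness: $\Sf_A$ satisfies the explicit $q$-difference system of Corollary~\ref{cor.SfA}, whose power-series solution with initial condition $1+\mathrm{O}(t)$ is unique, so the $p$-adic re-expansion at $\z_{pm}$ and the direct expansion there must coincide after the Frobenius twist. If you want to salvage your outline, you would need to supply a substitute for Proposition~\ref{prop.A1} (or reprove the relevant parts of~\cite{GSWZ}), since without such an integral closed form neither your denominator claim in (ii) nor your gluing claim in (iii) is established.
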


It was known that $\BZ[q^{\pm 1}][\![t]\!]$ as a consequence of the admissibility
of the series $f_A(t,q)$~\cite{GSWZ}. The proof of the above theorem
uses a manifestly integral formula for the symmetrisation $\Sf_A(t,q)$,
which is new and gives a self-contained proof of Theorem~\ref{thm.fA1}. 

\begin{proposition}
\label{prop.A1}
For every symmetric, integer $N \times N$ matrix $A$, we have:
\be
\label{SfA1}
\Sf_A(t,q) \= 
  \sum_{k\in\BZ_{\geq0}^N}
  (-1)^{\diag(A-I)k}q^{-k(A-I)k/2-\diag(A-I)k/2}\prod_{j=1}^N
\left[\!\!\begin{array}{cc} (Ak)_j+A_{jj}-1 \\ k_j \end{array}\!\!\right]_q
t_1^{k_1} \dots t_N^{k_N}.
\ee
\end{proposition}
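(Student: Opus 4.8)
The plan is to prove the identity by computing the product $\Sf_A(t,q) = f_A(t,q) f_A(t,q^{-1})$ directly as a double sum over $\BZ_{\geq 0}^N \times \BZ_{\geq 0}^N$, and then performing the inner sum in closed form using the $q$-binomial theorem. Write the double sum with summation indices $m$ and $n$ in $\BZ_{\geq 0}^N$. The exponent of $q$ coming from $f_A(t,q)$ is $\tfrac12(m^t A m + \diag(A)\cdot m)$, and from $f_A(t,q^{-1})$ it is $-\tfrac12(n^t A n + \diag(A)\cdot n)$; the sign is $(-1)^{\diag(A)\cdot(m+n)}$; the $t$-monomial is $t^{m+n}$; and the denominators are $(q;q)_{m_1}\cdots(q;q)_{m_N}$ and $(q^{-1};q^{-1})_{n_1}\cdots(q^{-1};q^{-1})_{n_N}$. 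First I would use the standard identity $(q^{-1};q^{-1})_n = (-1)^n q^{-\binom{n+1}{2}}(q;q)_n$ to rewrite the $q^{-1}$-Pochhammers as $q$-Pochhammers, absorbing the resulting power of $q$ and sign into the prefactor. Then I substitute $k = m+n$ as the new outer index, so that $n$ runs over the box $0 \le n_j \le k_j$ coordinatewise and $m = k-n$.

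The key step is to recognise the inner sum over $n$ (for fixed $k$) as a product of one-dimensional $q$-binomial sums. After the substitution, the $q$-exponent splits as a function of $k$ alone (which becomes the overall prefactor matching the claimed $q^{-k(A-I)k/2 - \diag(A-I)k/2}$) plus a bilinear cross-term in $n$ and $k-n$ coming from expanding $m^t A m - n^t A n = (k-n)^t A (k-n) - n^t A n$. Collecting the $n$-dependent part of the exponent together with the denominator $(q;q)_{k_j-n_j}(q;q)_{n_j}$ should produce, coordinatewise, an expression of the form $\sum_{n_j=0}^{k_j} q^{(\text{linear in }n_j)} \qbinom{k_j}{n_j}{q} \cdot (\text{stuff depending on the }j\text{-th row of }A)$ — and here the remaining $A$-dependence (the off-diagonal interactions $A_{ij}$ with $i\neq j$ and the cross term $(Ak)_j n_j$) must combine so that each one-variable sum is exactly an instance of the $q$-Vandermonde / finite $q$-binomial theorem $\sum_{n=0}^{k} \qbinom{k}{n}{q} q^{\binom{n}{2}} z^n = (-z;q)_k$ or its close relative, evaluating to a single $q$-Pochhammer. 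That Pochhammer is then rewritten as the $q$-binomial coefficient $\qbinom{(Ak)_j + A_{jj}-1}{k_j}{q}$ appearing in \eqref{SfA1}, using $\qbinom{a}{k}{q} = (q^{a-k+1};q)_k/(q;q)_k$ extended to all integers $a$.

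The main obstacle I anticipate is the bookkeeping of the quadratic form: one must verify that the cross term $-n^t A (k-n)$ (together with the $-\binom{n_j+1}{2}$ pieces from the $q^{-1}$-to-$q$ conversion and the diagonal contributions) reorganises so that, in the $j$-th factor, the exponent of $q^{n_j}$ is precisely $-(Ak)_j - A_{jj} + 1 + n_j$ or whatever linear form makes the $q$-binomial theorem apply cleanly — in other words, that the off-diagonal entries $A_{ij}$ enter only through the linear combination $(Ak)_j$ and cancel against the $m^t A m$ expansion. This is a finite, deterministic computation but it is easy to get a sign or a shift wrong; I would do it carefully for $N=1$ first to fix all conventions (checking against $\Sf_A(t,q)$ expanded to low order in $t$), and then the general $N$ case follows since the cross terms $n_i m_j A_{ij}$ for $i \ne j$ contribute symmetrically and get absorbed into $(Ak)_j$. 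A secondary check is convergence/formality: all manipulations take place in $\BQ(q)[\![t]\!]$ (indeed the claim is membership in $\BZ[q^{\pm1}][\![t]\!]$, which is manifest from the final formula since each $q$-binomial coefficient lies in $\BZ[q^{\pm1}]$), so there are no analytic subtleties — the identity is an identity of formal power series in $t$ with coefficients in $\BZ[q^{\pm1}]$, proved coefficient-by-coefficient in $t$.
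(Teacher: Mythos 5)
Your plan is correct and is essentially the paper's own proof: expand the product $f_A(t,q)f_A(t,q^{-1})$ as a double sum, convert $(q^{-1};q^{-1})_n=(-1)^nq^{-n(n+1)/2}(q;q)_n$, change variables to the total index so the inner sum runs over a coordinatewise box, and evaluate it factor-by-factor via the finite $q$-binomial theorem, rewriting the resulting Pochhammer as $\left[\!\begin{smallmatrix}(Ak)_j+A_{jj}-1\\ k_j\end{smallmatrix}\!\right]_q$. The only thing left is the exponent bookkeeping you already flagged, which the paper carries out in a four-line computation exactly along these lines.
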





\begin{remark}
\label{rem.Aetale}  
The \'etale map $\BZ[t] \to R_A$ is not finite, but the one
$\BZ[t,\tfrac{1}{\delta}] \to R_A$ is. Note also that there is a natural
embedding $R_A \hookrightarrow \BZ[\![t]\!]$, which plays an important
role in~\cite{GSWZ}.
\end{remark}

\begin{remark}
\label{rem.twist}
Twisting of $q$-hypergeometric elements $\Sf_A(t,q)$ gives elements in the
Habiro ring $\calH_{R/\BZ[t]}$. Indeed, given a symmetric
matrix $B$ with integer entries, we can twist $\Sf_A(t,q)$ by $B+A-I$ to obtain
(up to the signs of $t$, which is a probably regrettable choice of convention)
further elements in the Habiro ring
\be
\sum_{k\in\BZ_{\geq0}^N}
(-1)^{\diag(B)k}q^{-kBk/2-\diag(B)k/2}\prod_{j=1}^N
\left[\!\!\begin{array}{cc} (Ak)_j+A_{jj}-1 \\ k_j \end{array}\!\!\right]_q
t_1^{k_1} \dots t_N^{k_N} \in \calH_{R_A/\BZ[t]} \,.
\ee
These elements no longer come from a symmetrization associated to a matrix with
integer entries. 
\end{remark}

\begin{question}
Does the action for a symmetric matrix $B$,
\be
t_1^{k_1} \dots t_N^{k_N}
\mapsto
(-1)^{\diag(B)k}q^{-k^tBk/2-\diag(B)k/2}
t_1^{k_1} \dots t_N^{k_N}\,,
\ee
(known as the $q$-Borel transform) descend to the Habiro ring of $R_A/\BZ[t]$
or even to the Habiro cohomology for families over $t$? 
\end{question}


\subsection{Habiro cohomology classes from push-forward}
\label{sub.method2}

In this section we give a second method to construct explicit
classes in Habiro cohomology, defined as push-forwards of elements of the Habiro
ring of an \'etale map. To do so, we fix an \'etale map
\be
\BZ[x,\lambda] \to R
\ee
where $x=(x_1,\dots,x_N)$ and $\lambda=(\lambda_1,\dots,\lambda_L)$, and use the
$p$-Frobenius action on differential forms on $\Spec(\BZ[x,\lambda])$ given by 
\be
\label{frobs2}
\vphi_p(a) \= a^p, \qquad \vphi_p(da) \= a^{p-1}da 
\ee
for $a \in \{x_1,\dots,x_N,\lambda_1,\dots,\lambda_L\}$.

Let $\calH_{R/\BZ[x,\lambda]}^{\mathrm{an}}$ denote the subring of elements
of $\calH_{R/\BZ[x,\lambda]}$ whose $(q-\z_m)$-series, with coefficients in $R_m$,
have positive radius of convergence with respect to the completion of any prime ideal.


Setting $X=\Spec(R)$, consider the push-forward $X/B$ of the map
$X \to \Spec(\BZ[x,\lambda])$ along $x$, where
$B=\Spec(\BZ[\lambda,\tfrac{1}{\Delta(\lambda)}])$.
Fix a form $\om\in\Om^{N}(X/B)$ invariant under the
action of the $p$-Frobenius for all but finitely many $p$. For example,
\be
\label{omx}
\om \= \pi^*\wedge_{i=1}^N \frac{dx_i}{x_i}, \qquad
\pi: X \to \Spec(\BZ[x,\lambda]) \,.
\ee

Then we have the following.

\begin{theorem}
\label{thm.push}
There is a push-forward map
\be
\label{push}
\calH_{R/\BZ[x,\lambda]}^{\mathrm{an}} \to \calH^N_\nv(X/B),
\qquad f \mapsto [f \om]\,.
\ee 
\end{theorem}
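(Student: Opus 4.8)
The plan is to construct the push-forward map in~\eqref{push} by working one root of unity at a time and checking compatibility with the gluing maps. Fix an element $f = (f_m(q-\z_m))_{m\ge 1} \in \calH_{R/\BZ[x,\lambda]}^{\mathrm{an}}$, where $f_m(q-\z_m) \in R_m[\![q-\z_m]\!]$ has positive radius of convergence at every maximal ideal. For each $m$ coprime to the relevant bad primes, the product $f_m(q-\z_m)\,\om$ is a section of $\Om^N(X/B)\otimes R_m[\![q-\z_m]\!]$; composing with the relative de Rham projection $\Om^N(X/B) \to H^N_\dR(X/B)$ gives a class $[f_m(q-\z_m)\om] \in H^N_\dR(X/B)\otimes\BQ[\lambda^{1/m},\z_m][\![q-\z_m]\!]$. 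I would first verify that this lands in the correct coefficient ring and that the convergence hypothesis on $f$ (membership in $\calH^{\mathrm{an}}$) forces the resulting $(q-\z_m)$-series of cohomology classes to be $p$-adically convergent for $|q-\z_m|_p<1$, which is exactly the analyticity condition in Definition~\ref{def.hab}. This step is essentially bookkeeping once one fixes a basis of $H^N_\dR(X/B)$ over the localized base ring.

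The substantive point is compatibility with the gluing. I need
\be
\label{pushglue}
[f_m(q-\z_{pm}+(\z_{pm}-\z_m))\,\om] \= \vphi_p\bigl([f_{pm}(q-\z_{pm})\,\om]\bigr)
\ee
in $H^N_\dR(X/B)\otimes\BQ_p[\z_{mp}][\![q-\z_{pm}]\!]$ for all but finitely many $p$. The left side, by the gluing relation for $f$ in $\calH_{R/\BZ[x,\lambda]}$, equals $[\vphi_p f_{pm}(q-\z_{pm})\,\om]$, where $\vphi_p$ here is the Frobenius on $R_{pm}$ (acting as $p$th power on $x,\lambda$ and their roots). The right side involves the Frobenius on de Rham cohomology $H^N_\dR(X/B)\otimes\BZ_p$ coming from the comparison with crystalline cohomology. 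The key lemma is therefore: for a $p$-Frobenius-invariant form $\om \in \Om^N(X/B)$, the crystalline Frobenius $\vphi_p$ on the de Rham class $[\om]$ is compatible with the naive coordinatewise Frobenius~\eqref{frobs2}, i.e. $\vphi_p[g\,\om] = [\vphi_p(g)\om]$ for $g$ in the ring (or its $p$-adic completion). This is where the $p$-Frobenius invariance of $\om$ (e.g.\ the form $\pi^*\wedge_i dx_i/x_i$ in~\eqref{omx}, which satisfies $\vphi_p\bigl(\wedge_i dx_i/x_i\bigr) = \wedge_i x_i^{p-1}dx_i/x_i^p = \wedge_i dx_i/x_i$) is used: it guarantees that the lift of Frobenius on forms, as given by the prescription~\eqref{frobs2}, represents the crystalline Frobenius on the cohomology class.

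The main obstacle I anticipate is precisely this last lemma — matching the abstract crystalline/rigid Frobenius on $H^N_\dR(X/B)$ with the explicit coordinate-level action~\eqref{frobs2} on differential forms. The cleanest route is to invoke the explicit description of the Frobenius via rigid cohomology à la Kedlaya~\cite{Kedlaya:practice,Kedlaya:effective}: one chooses a Frobenius lift on the weak completion of $R$ extending $x_i\mapsto x_i^p$, $\lambda_j\mapsto\lambda_j^p$, and then the induced map on the de Rham complex is computed by the Leibniz rule, giving exactly~\eqref{frobs2} on logarithmic forms up to exact terms. Since cohomology classes are insensitive to the choice of Frobenius lift (different lifts induce homotopic maps on the complex), the naive formula computes the correct class. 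One then only needs that $X/B$ is smooth proper so that $H^N_\dR(X/B)$ modulo torsion is finite free and the comparison isomorphism holds for all but finitely many $p$ — which is part of the standing hypotheses around~\eqref{bf}. Assembling these pieces, the collection $([f_m(q-\z_m)\om])_{m}$ satisfies both the convergence and the gluing axioms, hence defines an element of $\calH^N_\nv(X/B) = \calH(H^N_\dR(X/B))$, and $f\mapsto [f\om]$ is visibly additive, giving the asserted map. Finally I would note $\calH_R$-linearity: multiplying $f$ by an element of $\calH_R$ (a scalar Habiro series) commutes with all the above operations since $\om$ is fixed, so the map is a morphism of $\calH_R$-modules, not merely of abelian groups.
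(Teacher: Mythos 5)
Your identification of the main obstacle is misplaced, and the step you wave off is where the actual content lies. The Frobenius-compatibility issue you spend most of your effort on is, in the paper's setup, essentially an input rather than something to be proved here: the Frobenius acts on forms by the explicit prescription~\eqref{frobs2}, the hypothesis is that $\varphi_p(\om)=\om$, and the Frobenius on $H^N_\dR(X/B)$ is the one fixed once and for all via the crystalline/rigid comparison. Granting that, the collection $f(q)\om$ glues at the level of $\Om^N(X/B)$-valued series formally, exactly as in your second paragraph; this part of your argument is fine but is not the crux.

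The genuine gap is the convergence step you call ``essentially bookkeeping once one fixes a basis.'' Membership in $\calH^{\mathrm{an}}_{R/\BZ[x,\lambda]}$ gives only a \emph{positive} radius of convergence with respect to completions at arbitrary prime ideals: concretely, the coefficients of $f_m$ lie in $R_m$, so they carry no $p$-denominators at all, but their degrees and pole orders along divisors of $X$ grow linearly in the power of $q-\z_m$. Definition~\ref{def.hab}, by contrast, demands convergence of the cohomology-valued series on the full disc $|q-\z_m|_p<1$, and these two conditions are not the same; nothing in your proposal bridges them. The bridge is the substance of the paper's proof: the projection $\Om^N(X/B)\to H^N_\dR(X/B)$ is only linear over the base ring $\BZ[\lambda,1/\Delta]$, not over $R$, so writing $[f_m\om]$ in a fixed basis of $H^N_\dR(X/B)$ requires Griffiths reduction, which introduces denominators. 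The key quantitative point is that reducing a form whose pole order grows linearly in $k$ costs denominators bounded linearly in $k$, hence $p$-valuations growing only logarithmically in $k$ (Proposition~\ref{prop.den2} is the model computation of exactly this phenomenon). This linear-to-logarithmic improvement is what gives convergence on $|q-\z_m|_p<1$, and that convergence is in turn what legitimizes re-expanding the reduced series at $\z_{pm}$ and forces the re-expansion to agree with the reduction of the re-expanded $f_m$, i.e.\ the gluing for the classes. Without this estimate you have not shown that $f\mapsto[f\om]$ lands in $\calH^N_\nv(X/B)$ at all, so the proposal as written does not prove the theorem.
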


Roughly, speaking, we have
\be
\label{roughly}
\text{$x$-push forward} \;\approx\; \Res_{x=0} \;\approx\; \text{$x$-constant term} \,.
\ee
A word of caution: we will work exclusively with affine varieties and explicit
(integral) de Rham cohomology classes, and not touch upon hypercohomology, Hodge
cohomology of smooth projective curves.

\subsection{Habiro cohomology classes of Calabi--Yau manifolds}
\label{sub.CY}

An application of Theorems~\ref{thm.ct} and~\ref{thm.push} constructs
explicit Habiro cohomology classes on Calabi--Yau families over an one-dimensional
space. 

Fix a Laurent polynomial $h_\calA(x)$ as in Equation~\eqref{hA} and
Theorem~\ref{thm.ct}, and let $X$ denote the 0-locus of $1-\lambda h_\calA(x)=0$
and $X/B$ the corresponding smooth map for
$B=\Spec(\BZ[\lambda,\tfrac{1}{\Delta(\lambda)}])$. Let
\be
\om \=  \bigwedge_{i=1}^{n+1} \frac{d x_i}{x_i} \,.
\ee

\begin{theorem}
\label{thm.pfh}
We have
\be
[\Res_{1-\lambda h_\calA(x)}(f_\calA(x,\lambda,q) \om)] \in \calH_\nv^n(X/B) \,.
\ee
\end{theorem}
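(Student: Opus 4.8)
\textbf{Proof plan for Theorem~\ref{thm.pfh}.}

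The plan is to obtain this statement as a direct combination of Theorem~\ref{thm.ct} and Theorem~\ref{thm.push}, with the residue realizing the push-forward of Equation~\eqref{roughly}. First I would set $R = \BZ[x,\lambda][\tfrac{1}{1-\lambda h_{\calA,Q}(x)}]$ (working with the balanced data so that the diagonal signs play no role, i.e.\ specializing the matrix $Q$ or the convention in~\eqref{hA} as needed so that $h_\calA(x) = h_{\calA,Q}(x)$), and observe that the map $\BZ[x,\lambda]\to R$ is \'etale away from the discriminant $\Delta(\lambda)$ of the family $1-\lambda h_\calA(x)=0$; this is exactly the setup of Section~\ref{sub.method2}. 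By Theorem~\ref{thm.ct}, the collection $f_{\calA,Q}(x,\lambda,q)$ is an element of $\calH_{R/\BZ[x,\lambda]}$. The second preliminary point is to check that this element in fact lies in the analytic subring $\calH^{\mathrm{an}}_{R/\BZ[x,\lambda]}$, i.e.\ that each $(q-\z_m)$-series has positive radius of convergence after completing at any prime ideal; this follows because the summand of $f_{\calA,Q}(x,\lambda,q)$ is a proper $q$-hypergeometric term, so the coefficient of each monomial in $x,\lambda$ is a \emph{polynomial} in $q$ (a finite sum over $k$ bounded by the degree), hence its $(q-\z_m)$-expansion is a polynomial and trivially convergent.

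With these in hand, Theorem~\ref{thm.push} applied to the invariant top form $\om = \wedge_{i=1}^{n+1} \tfrac{dx_i}{x_i}$ on $X/B$ (which is $p$-Frobenius invariant by~\eqref{frobs2}, since each $dx_i/x_i \mapsto x_i^{p-1}dx_i/x_i^p = dx_i/x_i$) produces the class $[f_{\calA,Q}(x,\lambda,q)\,\om] \in \calH^{n}_\nv(X/B)$, where $n+1$ is the number of $x$-variables so that $X/B$ has relative dimension $n$. The remaining task is to identify this push-forward class with $[\Res_{1-\lambda h_\calA(x)}(f_\calA(x,\lambda,q)\om)]$. For this I would unwind the construction of the push-forward map in Section~\ref{sub.method2}: the push-forward of $X \to \Spec(\BZ[x,\lambda])$ along the $x$-coordinates, restricted to the hypersurface $X \subset \Spec(\BZ[x^{\pm},\lambda,\tfrac{1}{\Delta}])$ cut out by $1-\lambda h_\calA(x)=0$, is computed exactly by the Poincar\'e/Leray residue along that hypersurface, which is the content of the heuristic~\eqref{roughly}. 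Thus $[f_{\calA,Q}(x,\lambda,q)\om]$ and $[\Res_{1-\lambda h_\calA}(f_\calA(x,\lambda,q)\om)]$ are literally the same element of $\calH^n_\nv(X/B)$, term by term in $m$ and order by order in $q-\z_m$.

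The main obstacle is the precise bookkeeping in the last step: making the identification ``$x$-push-forward $=$ residue along $1-\lambda h_\calA=0$'' rigorous at the level of de Rham cohomology \emph{classes} with their Frobenius structure, so that the gluing conditions of Definition~\ref{def.hab} are respected on both sides. One must verify that the Leray residue map $\Om^{N}(\Spec(\BZ[x^{\pm},\lambda,\tfrac1\Delta]) \setminus X) \to H^{n}_\dR(X/B)$ is compatible with the chosen $p$-Frobenius on forms in~\eqref{frobs2} and with the Frobenius on $H^n_\dR(X/B)$ coming from crystalline/rigid cohomology — in other words, that the residue intertwines the two Frobenius actions for all but finitely many $p$. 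Given that $f_{\calA,Q}(x,\lambda,q)$ already satisfies the Habiro gluing by Theorem~\ref{thm.ct} and the push-forward of Theorem~\ref{thm.push} is a well-defined map of $\calH$-modules, this compatibility is exactly what makes the diagram commute, and the theorem follows; but writing it out carefully (and pinning down the convergence hypothesis of $\calH^{\mathrm{an}}$) is where the real work lies. I expect the argument to reduce, modulo this diagram chase, to the two cited theorems with essentially no new computation.
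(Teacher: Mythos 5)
Your overall route is exactly the paper's: Theorem~\ref{thm.pfh} is presented there as a direct application of Theorems~\ref{thm.ct} and~\ref{thm.push}, with the residue along $1-\lambda h_\calA(x)=0$ realizing the $x$-push-forward in the sense of \eqref{roughly}, so no computation beyond those two results is required.

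One step of your justification is off, though the conclusion is easily repaired. To apply Theorem~\ref{thm.push} you must place $f_{\calA,Q}(x,\lambda,q)$ in $\calH^{\mathrm{an}}_{R/\BZ[x,\lambda]}$, and your argument --- that the coefficient of each monomial in $x,\lambda$ is a polynomial in $q$ --- concerns the wrong expansion: the analyticity condition is about the $(q-\z_m)$-series whose coefficients are elements of $R_m$, obtained after resumming the $x,\lambda$-expansion into rational functions such as powers of $(1-\lambda h_\calA(x))^{-1}$, and polynomiality in $q$ of the monomial coefficients says nothing directly about the growth of those $R_m$-coefficients in a prime-ideal completion. The immediate and correct reason is the integrality built into Theorem~\ref{thm.ct} (via the proof of Theorem~\ref{thm.ct2}): the $(q-\z_m)$-coefficients lie in $R_m$ itself, not merely in $R_m\otimes\BQ$, so their valuations are nonnegative and the series converge for $|q-\z_m|_p<1$, hence in any completion as required. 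With that substitution your proposal coincides with the paper's argument.
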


Note that
\be
\label{hA1}
\Res_{x=0} \frac{1}{1-h_\calA(x)} \om 
\= \sum_{k=0}^\infty \tr (h(x)^k) \lambda^k 
\ee
where $\tr(g(x)) \= Res_{x=0} g(x) \om$ denotes the constant term (i.e., the
coefficient of $x^0$) of a Laurent polynomial $g(x)$.

One might think that the restriction on all coefficients of $h_\calA(x)$ to be 1 in
Theorem~\ref{thm.pfh} is too restrictive. Yet,  
the class of smooth maps $X/B$ that come from polynomials $h$ with all coefficients
$1$ includes all known (and conjecturally all) Calabi--Yau manifolds with a complex
1-dimensional Kalher moduli space. A database for those is available in~\cite{AESZ};
see also~\cite{Candelas:local}. 
In particular, it includes the generating series of the famous Apery sequence
\be
\label{apery}
a_n \= \sum_{k=0}^n \binom{n}{k}^2 \binom{n+k}{k}^2 \= \tr(h_A(x,y,z)^n)
\ee
with
\be
h_A(x_1,x_2,x_3) \= \frac{(x_1+x_2)(x_3+1)(x_1+x_2+x_3)(x_2+x_3+1)}{x_1 x_2 x_3} 
\ee
see e.g.,~\cite[Rem.1.4]{Straub}. 

The sequence $(a_n)$ satisfies a second order recursion with coefficients polynomials
in $n$ of degree $3$, and consequently its generating series satisfies a linear
differential equation of order $3$ with coefficients polynomials in $\lambda$ of
degree $2$. On the other hand, the \texttt{HolonomicFunctions} package gives
that $a_n(q)$ satisfies a linear $q$-difference equation of order $6$ with coefficients
polynomials in $(q^n,q)$ of degree $(58,188)$. There is no guarantee that
these are the smallest parameters but a search with precomputed data and smaller
set of parameters failed to produce a simpler equation. 


The two methods of constructing elements of the Habiro cohomology
from Sections~\ref{sub.method1} and ~\ref{sub.method2} are closely related.
On one hand, one can work with the Picard-Fuchs
equations and their quantisations or work with the original forms that solve the
Picard-Fuchs equation as cohomology classes.
Both lead to slowly growing denominators for what seem to be different reasons.
The first begins with large denominators that disappear after the reduction to
a remainder in the Weyl algebra. The second begins as $p$-integral with generally
large denominators coming from other ideals, which turn into slowly growing
$p$-denominators after reduction of forms. At the end both methods lead to the
same slowly growing denominators.

\subsection{Future directions}
\label{sub.future}

In this section we give a conjectural lift of the perturbative invariants
of one-cusped oriented (finite volume, complete) hyperbolic 3-manifolds
(such as hyperbolic knot complements) to elements of geometrically-constructed
Habiro rings and Habiro cohomologies. Each such manifold $M$ has a variety
of conjugacy classes of
representations of its fundamental group in $\SL_2(\BC)$. Given a pair of peripheral
elements (meridian and longitude) that generate the integral homology of $\pt M$,
there a map from the above variety to $(\BC^*)^2$ that records the
eigenvalues of the meridian and the longitude. The image of the character variety
of $M$ under this map is an affine curve $X_M$ in $(\BC^*)^2$ with defining
polynomial $A_K(x,y) \in \BZ[x^{\pm 1},y^{\pm 1}]$, the so-called $A$-polynomial of
$M$~\cite{CCGLS}. This defines an \'etale map
\be
\label{ketale}
\BZ[x^{\pm 1}] \to
R_M\=\BZ[x^{\pm 1}][y^{\pm 1},\Delta(x)^{-1}]/(A_M(x,y)) \,,
\ee
where $\Delta(x)$ is the discriminant of $A_M$ with respect to $x$. 
Perturbative Complex Chern--Simons theory (that is, the collection of
power series of~\cite{DG,DG2} at roots of unity, with $x$-deformation included)
ought to define a section of a line bundle over the Habiro ring
$\calH_{R_M/\BZ[x^{\pm1}]}$, and the 3D-index of Dimofte--Gaoitto--Gukov~\cite{DGG1}
ought to define an element of the Habiro ring $\calH_{R_M/\BZ[x^{\pm1}]}$.
A full construction of the conjectured elements of the above-mentioned Habiro ring
yet available, nonetheless, the topological invariance of the above section
(as a power series in $q-1$) is known~\cite{GSW}, and so is the topological
invariance of the 3D-index; see~\cite{GHRS,GK:mero} and also the discussion
in~\cite{GW:periods}.

Despite the conjectural tone of the above discussion, all numerical observations
presented in the introduction are provably true for the simplest hyperbolic $4_1$
knot, and we explain this in Section~\ref{sec.41} below.


\subsection*{Acknowledgements} 

The authors wish to thank Don Zagier and especially Peter Scholze and Ferdinand
Wagner for many enlightening conversations. C.W. has been supported by the Huawei
Young Talents Program at Institut des Hautes Études Scientifiques, France. The
authors wish to thank the Max-Planck Institute for Mathematics and the I.H.E.S.
for their hospitality. 


\section{From hypergeometric motives to Habiro cohomology}
\label{sec.method1}

In this section we present in detail a method to construct Habiro cohomology
classes on a hypergeometric motive. To do so, we recall some elementary lemmas
concerning the expansions of $q$-Pochhammer symbols near complex roots of unity,
and then discuss how to obtain elements in the Habiro cohomology of the Fermat
curves. Then, we recall the basic properties of hypergeometric motives, and their
associated functions, and use them to define the collections of series that
appear in Theorem~\ref{thm.1}. Combining all this together, we prove the
convergence and gluing properties of Theorem~\ref{thm.1}.
In the remaining part of this section, we discuss in detail several examples.



\subsection{$q$-Pochhammer symbols near roots of unity}
\label{sub.asy}

In this section present some elementary lemmas regarding the asymptotics of
the $q$-Pochhammer symbols near roots of unity. Note that finite $q$-Pochhammer
symbols can be expressed as quotients of two infinite $q$-Pochhammer symbols
as follows
\be
\label{qpk}
(q^a;q)_k=\tfrac{(q^a;q)_\infty}{(q^{a+k};q)_\infty}
\ee
for every $k \in \BZ_{\geq 0}$ and $a \in \BQ_{>0}$.
\begin{lemma}
\label{lem.q1}
For every rational number $a\in\BQ$, and with $q=e^\hbar$, as $\hbar\to0^{-}$ we
have:  
\be
\label{eq:qpinf}
  (q^a;q)_\infty
  \;\sim\;
  \sqrt{2\pi}\frac{(-\hbar)^{\frac{1}{2}-a}}{\Gamma(a)}
  \exp\Big(\frac{\pi^2}{6\hbar}-\sum_{\ell=2}^\infty
  B_{\ell}(a)
  \frac{B_{\ell-1}(1)}{\ell-1}\frac{\hbar^{\ell-1}}{\ell!}
  \Big)\,.
\ee
\end{lemma}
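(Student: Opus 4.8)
\emph{Proof sketch.} The plan is to take logarithms and compute the full asymptotic expansion of $\log(q^a;q)_\infty$ as $\hbar\to0^-$ by a Mellin--Barnes argument, and then exponentiate; thus ``$\sim$'' in~\eqref{eq:qpinf} is read as an asymptotic expansion for the logarithm. We may assume $a>0$: for $a\notin\BZ_{\le0}$ one reduces to this case via $(q^a;q)_\infty=(1-q^a)(q^{a+1};q)_\infty$ together with the elementary fact that the right-hand side of~\eqref{eq:qpinf} transforms consistently under $a\mapsto a+1$ (the correction $\log((e^{x}-1)/x)$ has Taylor expansion $\sum_{j\ge1}\tfrac{B_j(1)}{j\cdot j!}x^j$, which matches the induced shift in the prefactor and in the Bernoulli series); for $a\in\BZ_{\le0}$ both sides vanish. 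Set $t=-\hbar>0$, so $q=e^{-t}\in(0,1)$ and $(q^a;q)_\infty$ converges.

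Expanding $\log(1-x)=-\sum_{n\ge1}x^n/n$ in each factor gives the Lambert-type series $\log(q^a;q)_\infty=\sum_{k\ge0}\log(1-q^{a+k})=-\sum_{n\ge1}\sum_{m\ge0}\tfrac1n e^{-n(a+m)t}$. Substituting the Mellin--Barnes identity $e^{-\mu t}=\tfrac1{2\pi\im}\int_{(c)}\Gamma(s)(\mu t)^{-s}\,ds$ for $\mu=n(a+m)$ with fixed $c>1$, and interchanging sum and integral (legitimate by absolute convergence for $c>1$, since $\sum_n n^{-1-c}<\infty$, $\sum_m(a+m)^{-c}<\infty$, and $\Gamma$ is integrable on vertical lines), yields $\log(q^a;q)_\infty=-\tfrac1{2\pi\im}\int_{(c)}\Gamma(s)\,\zeta(s+1)\,\zeta(s,a)\,t^{-s}\,ds$, where $\zeta(s,a)=\sum_{m\ge0}(a+m)^{-s}$ is the Hurwitz zeta function.

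Now push the contour to $\Re s=-M-\tfrac12$. Stirling's bound gives exponential decay of $\Gamma$ on vertical lines, while $\zeta(s+1)$ and $\zeta(s,a)$ grow at most polynomially in vertical strips, so the shifted integral is $O(t^{M+1/2})=O(\hbar^M)$; it remains to add the residues in $\Re s>-M-\tfrac12$, located at $s=1$ (the simple pole of $\zeta(s,a)$, residue $1$), at $s=0$, and at $s=-\ell$ for $1\le\ell\le M$ (the poles of $\Gamma$). The pole at $s=1$ contributes $-\Gamma(1)\zeta(2)t^{-1}=\tfrac{\pi^2}{6\hbar}$ (recall $t=-\hbar$). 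At $s=-\ell$, using $\mathrm{Res}_{s=-\ell}\Gamma(s)=(-1)^\ell/\ell!$, the special values $\zeta(1-\ell)=-B_\ell(1)/\ell$ and $\zeta(-\ell,a)=-B_{\ell+1}(a)/(\ell+1)$, and $t^\ell=(-\hbar)^\ell$, one obtains the term $-B_{\ell+1}(a)\tfrac{B_\ell(1)}{\ell}\tfrac{\hbar^\ell}{(\ell+1)!}$, which after reindexing $\ell\mapsto\ell-1$ is precisely the $\ell$-th summand of the Bernoulli series in~\eqref{eq:qpinf}.

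The delicate residue — and the main obstacle — is at $s=0$, where $\Gamma(s)$ and $\zeta(s+1)$ each contribute a simple pole, so the integrand has a \emph{double} pole there; this is exactly the source of the transcendental prefactor $\sqrt{2\pi}\,(-\hbar)^{1/2-a}/\Gamma(a)$. From $\Gamma(s)=s^{-1}-\gamma+O(s)$ and $\zeta(s+1)=s^{-1}+\gamma+O(s)$ the $s^{-1}$ terms in the product cancel, so $\Gamma(s)\zeta(s+1)=s^{-2}+O(1)$; expanding $\zeta(s,a)\,t^{-s}=\zeta(0,a)+\bigl(\zeta'(0,a)-\zeta(0,a)\log t\bigr)s+O(s^2)$, the residue at $s=0$ equals $\zeta'(0,a)-\zeta(0,a)\log t$. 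By $\zeta(0,a)=\tfrac12-a$ and Lerch's formula $\zeta'(0,a)=\log\Gamma(a)-\tfrac12\log(2\pi)$, this is $\log\Gamma(a)-\tfrac12\log(2\pi)-(\tfrac12-a)\log(-\hbar)$, whose negative is $\log\bigl(\sqrt{2\pi}\,(-\hbar)^{1/2-a}/\Gamma(a)\bigr)$. Collecting all residues with the sign from the contour shift gives, for every $M$, $\log(q^a;q)_\infty=\tfrac{\pi^2}{6\hbar}+\log\bigl(\sqrt{2\pi}\,(-\hbar)^{1/2-a}/\Gamma(a)\bigr)-\sum_{\ell=2}^{M}B_\ell(a)\tfrac{B_{\ell-1}(1)}{\ell-1}\tfrac{\hbar^{\ell-1}}{\ell!}+O(\hbar^M)$, and exponentiating yields~\eqref{eq:qpinf}. (An alternative is to apply Euler--Maclaurin to $\sum_{k\ge0}\log(1-e^{(a+k)\hbar})$, where the leading integral is $\tfrac1\hbar\Li_2(e^{a\hbar})$ and the same prefactor emerges from the expansion of $\Li_2$ near its singular value $\Li_2(1)=\pi^2/6$; the Mellin route is preferable because it isolates this delicate piece as a single residue.)
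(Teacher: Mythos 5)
Your argument is correct, but it takes a genuinely different route from the paper's. The paper proves the lemma in one line, by applying a standard summation method (Euler--Maclaurin or Abel--Plana) to $\log(q^a;q)_\infty=\sum_{k\ge0}\log\bigl(1-e^{(a+k)\hbar}\bigr)$ and disposing of $a\in\BZ_{\le0}$ by noting that both sides vanish --- essentially the alternative you mention in your closing parenthesis. You instead use the Mellin--Barnes representation $\log(q^a;q)_\infty=-\tfrac{1}{2\pi i}\int_{(c)}\Gamma(s)\zeta(s+1)\zeta(s,a)t^{-s}\,ds$ and a contour shift, and your bookkeeping checks out: the simple pole at $s=1$ gives $\pi^2/(6\hbar)$; at the double pole $s=0$ the $s^{-1}$ terms of $\Gamma(s)\zeta(s+1)$ do cancel, and $\zeta(0,a)=\tfrac12-a$ together with Lerch's formula produces exactly $\log\bigl(\sqrt{2\pi}\,(-\hbar)^{1/2-a}/\Gamma(a)\bigr)$; the poles of $\Gamma$ at $s=-\ell$ with $\zeta(1-\ell)=-B_\ell(1)/\ell$ and $\zeta(-\ell,a)=-B_{\ell+1}(a)/(\ell+1)$ reproduce the Bernoulli series after reindexing; and your identity $\log\bigl((e^x-1)/x\bigr)=\sum_{j\ge1}B_j(1)\,x^j/(j\cdot j!)$ is precisely what makes the right-hand side consistent under $a\mapsto a+1$, so the reduction to $a>0$ (and the vanishing for $a\in\BZ_{\le0}$, where $1/\Gamma(a)=0$) is legitimate. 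As for what each approach buys: your Mellin route isolates the transcendental prefactor as a single residue via $\zeta'(0,a)$ and gives the $O(\hbar^M)$ error for every $M$ cleanly, from Stirling decay of $\Gamma$ on vertical lines against polynomial growth of the two zeta factors, at the price of Hurwitz-zeta machinery; the paper's Euler--Maclaurin route is more elementary and is the same method invoked for the root-of-unity generalisation in Lemma~\ref{lem:qpzm}, but there the prefactor must be assembled from the expansion of $\Li_2$ near its value $\pi^2/6$ and Stirling's series rather than read off at once.
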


\begin{proof}
This follows from standard summation methods such as the Euler-MacLaurin or the
Abel-Plana summation methods when $\alpha$ is not a non-positive integer
and follows for non-positive integers as both side are identically equal to zero.
\end{proof}

Lemma~\ref{lem.q1} and Equation~\eqref{qpk} imply that
for every $k \in \BZ_{\geq 0}$ and $a\in\BQ_{>0}$, and with $q=e^\hbar$, we have:  
\be
  \label{eq:qp.q1}
\begin{aligned}
  (q^a;q)_k
  &\=
  (-1)^k(a)_k\hbar^k
  \exp\Big(\sum_{\ell=2}^{\infty}(B_{\ell}(k+a)-B_{\ell}(a))
  \frac{B_{\ell-1}(1)}{\ell-1}\frac{\hbar^{\ell-1}}{\ell!}\Big)\\
  &\;\in\;
  (-1)^k(a)_k\hbar^a\BQ[a,k][\![\hbar]\!]\,.
\end{aligned}
\ee

More generally, we will need the expansion of $(q^a;q)_\infty$ near roots of
unity $\z_m$. This is given in the next lemma, using the notation of Equation
~\eqref{abracket}.

\begin{lemma}
\label{lem:qpzm}
For every rational number $a\in\BQ$, integer $m \geq 1$ coprime to the
  denominator of $a$ and $q=\z_m e^\hbar$, as $\hbar\to0^{-}$ we have
\be
\label{eq:qpinfm}
\begin{aligned}
  &(q^a;q)_\infty
  \;\sim\;
  \sqrt{2\pi}\frac{(-m\hbar)^{\frac{1}{2}-[a]_m}}{\Gamma([a]_m)}
  \prod_{\langle-a\rangle_m\neq j=0}^{m-1}
  (1-\z_m^{a+j})^{\frac{1}{2}-\frac{a+j}{m}}
  \exp\Big(\frac{\pi^2}{6m\hbar}\\
  &-\sum_{\ell=2}^\infty
  B_{\ell}([a]_m)
  \frac{B_{\ell-1}(1)}{\ell-1}\frac{(m\hbar)^{\ell-1}}{\ell!}+\!\!\!\!\!\!
  \sum_{\langle-a\rangle_m\neq j=0}^{m-1}
  \sum_{\ell=2}^{\infty}\frac{B_{\ell}(\frac{a+j}{m})}{\ell!}
  \Li_{2-\ell}(\z_m^{a+j})(m\hbar)^{\ell-1}\Big) \,.
\end{aligned}
\ee
\end{lemma}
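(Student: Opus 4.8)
\textbf{Proof plan for Lemma~\ref{lem:qpzm}.}

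The plan is to reduce the expansion of $(q^a;q)_\infty$ near $\z_m$ to $m$ copies of the $\hbar\to0^-$ expansion already established in Lemma~\ref{lem.q1}, by splitting the index of the infinite product into residue classes modulo $m$. Concretely, write $q=\z_m e^{\hbar}$ and group
\be
(q^a;q)_\infty \= \prod_{n\geq0}(1-q^{a+n}) \= \prod_{j=0}^{m-1}\prod_{\substack{n\geq0\\ n\equiv j\,(m)}}(1-q^{a+n})
\= \prod_{j=0}^{m-1}\bigl(\z_m^{a+j}\,e^{m\hbar[(a+j)/m]};\,e^{m\hbar}\bigr)_\infty^{(j)},
\ee
where the inner product over the class $n\equiv j$ is, after the substitution $n=j+m\nu$, an infinite product in the variable $Q:=e^{m\hbar}$ of the form $\prod_{\nu\geq0}(1-\z_m^{a+j}Q^{(a+j)/m+\nu})$. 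For the residue class $j\equiv -a\pmod m$ the base $\z_m^{a+j}$ equals $1$, so this factor is genuinely a $q$-Pochhammer symbol $((e^{m\hbar})^{[a]_m};e^{m\hbar})_\infty$ to which Lemma~\ref{lem.q1} applies directly with the rescaled variable $m\hbar$; this produces the $\sqrt{2\pi}(-m\hbar)^{1/2-[a]_m}/\Gamma([a]_m)$ prefactor, the $\pi^2/(6m\hbar)$ term, and the first Bernoulli sum in~\eqref{eq:qpinfm}.

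The remaining $m-1$ factors, indexed by $\langle-a\rangle\neq j$, have base a nontrivial root of unity $\zeta:=\z_m^{a+j}\neq1$, so the product $\prod_{\nu\geq0}(1-\zeta\,Q^{(a+j)/m+\nu})$ converges to a finite nonzero value as $Q\to1^-$ and contributes no singular term. For these I would apply Euler--Maclaurin (or Abel--Plana, as in the proof of Lemma~\ref{lem.q1}) to $\sum_{\nu\geq0}\log(1-\zeta\,Q^{c+\nu})$ with $c=(a+j)/m$ and $Q=e^{m\hbar}$: the leading (integral) term gives the $\hbar$-independent constant $(1-\zeta)^{1/2-(a+j)/m}$ after evaluating $\int_0^\infty\log(1-\zeta e^{-m\hbar(c+t)})\,dt$ and the $\log$-derivative boundary correction at $t=0$, and the higher Euler--Maclaurin corrections, which involve $B_\ell$ evaluated at $c$ times derivatives of $\log(1-\zeta e^{-s})$ at $s=0$, repackage into $\sum_{\ell\geq2}\frac{B_\ell((a+j)/m)}{\ell!}\Li_{2-\ell}(\zeta)(m\hbar)^{\ell-1}$ using the standard identity $\bigl(-\partial_s\bigr)^{\ell-1}\log(1-\zeta e^{-s})\big|_{s=0}=\Li_{2-\ell}(\zeta)$ for $\ell\geq2$ (and the $\ell=1$ term is absorbed into the constant). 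Multiplying the $m$ factors and collecting the prefactors, singular terms, and the two Bernoulli/polylog sums yields exactly~\eqref{eq:qpinfm}.

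The main obstacle is bookkeeping rather than conceptual: one must track carefully how the fractional exponent $(a+j)/m$, the integer part implicit in $[a]_m$, and the boundary terms of Euler--Maclaurin combine across the $m$ residue classes, and check that the $j\equiv-a$ class really is the unique one producing the $\Gamma([a]_m)$ and $\pi^2/(6m\hbar)$ contributions while the others produce only the finite root-of-unity products and the polylogarithmic corrections. A secondary point needing care is the degenerate case where $a$ is a non-positive integer (or where some intermediate $\Gamma$-argument hits a pole): there both sides vanish identically, exactly as noted in the proof of Lemma~\ref{lem.q1}, so that case is handled separately and trivially. Convergence of all the series in $\hbar$ is asymptotic only, consistent with the $\sim$ in the statement, so no analyticity claims are needed.
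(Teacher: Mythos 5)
Your skeleton is the same as the paper's: split $(q^a;q)_\infty=\prod_{j=0}^{m-1}(q^{a+j};q^m)_\infty$ into residue classes mod $m$, treat the class $j=\langle-a\rangle_m$ by Lemma~\ref{lem.q1} with $\hbar$ replaced by $m\hbar$, and handle the remaining classes by Euler--Maclaurin/Abel--Plana. The genuine gap is your treatment of the classes with $\z_m^{a+j}\neq1$: you assert that $\prod_{\nu\geq0}(1-\z_m^{a+j}Q^{c+\nu})$ (with $Q=e^{m\hbar}$, $c=\tfrac{a+j}{m}$) ``converges to a finite nonzero value as $Q\to1^-$ and contributes no singular term'', with the Euler--Maclaurin integral term allegedly giving the constant $(1-\z_m^{a+j})^{\frac12-c}$. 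This is false. Each individual factor tends to $1-\z_m^{a+j}$, but on the order of $1/|m\hbar|$ factors differ appreciably from that limit, and
\be
\sum_{\nu\geq0}\log\bigl(1-\z_m^{a+j}Q^{c+\nu}\bigr)
= -\sum_{k\geq1}\frac{\z_m^{(a+j)k}}{k}\,\frac{Q^{ck}}{1-Q^{k}}
= \sum_{\ell\geq0}\frac{B_\ell(c)}{\ell!}\,\Li_{2-\ell}(\z_m^{a+j})\,(m\hbar)^{\ell-1}\,,
\ee
whose $\ell=0$ term $\Li_2(\z_m^{a+j})/(m\hbar)$ is an exponential singularity; it is exactly the Euler--Maclaurin integral term, which is $O(1/\hbar)$, not $O(1)$, while the constant $(1-\z_m^{a+j})^{\frac12-c}$ is the $\ell=1$ (i.e.\ $B_1$-boundary) contribution. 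This is precisely why the paper's proof quotes the auxiliary asymptotic identity $(q^ax;q)_\infty\sim\exp\bigl(\sum_{\ell\geq0}\frac{B_\ell(a)}{\ell!}\Li_{2-\ell}(x)\hbar^{\ell-1}\bigr)$ with the sum starting at $\ell=0$, not $\ell=2$.

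If you run your Euler--Maclaurin step correctly, the non-principal classes produce the extra factor $\exp\bigl(\sum_{j\neq\langle-a\rangle_m}\Li_2(\z_m^{a+j})/(m\hbar)\bigr)$, which by the distribution relation $\sum_{\zeta^m=1}\Li_2(\zeta)=\Li_2(1)/m$ combines with the principal class's $\pi^2/(6m\hbar)$ to give total singular exponent $\pi^2/(6m^2\hbar)$; a sanity check is $a=1$, $m=2$ (i.e.\ $q\to-1$), where $(q;q)_\infty$ has exponent $\pi^2/(24\hbar)$, not $\pi^2/(12\hbar)$. So the displayed formula~\eqref{eq:qpinfm}, read literally with only $\pi^2/(6m\hbar)$ and the $\ell\geq2$ sums, omits these $\ell=0$ contributions (apparently a slip in the statement, harmless downstream since the singular parts cancel in the ratios defining $B_m(\a,\b;q-\z_m)$). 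Your proposal neither detects nor repairs this: the decisive ``no singular term'' step is asserted on invalid grounds rather than computed. The fix is to replace that claim by the full expansion above and carry the $\Li_2$ terms explicitly.
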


\begin{proof}
Notice that
\be
  (q^a;q)_\infty
  \=
  \prod_{j=0}^{m-1}(q^{a+j};q^m)_\infty\,.
\ee
This then reduces the computation to Equation~\eqref{eq:qpinf} and the asymptotic
identity as $\hbar\to0^{-}$
\be
  (q^ax;q)_\infty
  \;\sim\;
  \exp\Big(\sum_{\ell=0}^{\infty}\frac{B_{\ell}(a)}{\ell!}
  \Li_{2-\ell}(x)\hbar^{\ell-1}\Big)\,,\qquad q\=e^{\hbar}\,,
\ee
which again follows from standard summation methods.
\end{proof}

Note that Lemma~\ref{lem:qpzm} and Equation~\eqref{qpk} imply an expansion of
$(q^a;q)_k$ near roots of unity of order coprime to the denominator of $a$. 

The above asymptotic expansions motivate the collection of series
for some fixed $a\in\BQ\setminus\BZ$, $m\in\BZ_{>0}$ coprime to the denominator
of $a$
\be
  \Gamma_m(a;q-\z_m)
  \=
  \Gamma([a]_m)\frac{(q^{a};q)_\infty}{(q;q)_\infty}
  \in(q-\z_m)^{-[a]_m}\BQ[\z_m][\![q-\z_m]\!]
\ee
which, in a sense, is an expansion of the $q$-$\Gamma$ function (originally
introduced by Jackson~\cite{Jackson}) at roots of unity.
The next remark relates this function to exponential periods.

\begin{remark}
\label{rem:gam.exp.per}
The collection
\be
  \om_{m,q-\z_m}
  \=
  \Gamma_m(a;q-\z_m)[x^{-[a]_m}dx]_{-x}
\ee
(where the subscript $_{-x}$ denotes twisted de Rham cohomology) 
should give an element of the Habiro cohomology associated to the twisted
de Rham complex with differential $d-dx\wedge$ associated to the exponential
periods (see~\cite[Sec. 4.3]{KZ:periods})
\be
  \int x^{-a}e^{-x}dx\,.
\ee
\end{remark}

\subsection{The Fermat curve}
\label{sec:fermat}

The first example that we will discuss will be the smooth projective
Fermat curve $X_N$ over $\BZ[1/N]$ whose affine equation given by
\be
\label{fermatdef}
y^N+z^N=1 
\ee
for a positive integer $N>0$. It is a curve of genus of genus $(N-1)(N-2)/2$.
This is a classic example discussed in detail among other places,
in~\cite{Coleman:fermat} and~\cite{Gross:periods}.
For $\a,\b\in\frac{1}{N}\BZ_{>0}$ we have de Rham cohomology classes
associated to the 1-forms (where $y^N=x$, thus $z^N=1-x$)
\be
\label{eq:omab}
  \om_{\a,\b}
  \=
  x^{\a-1}(1-x)^{\b-1}dx\,.
\ee
The periods of these forms are expressed in terms of Euler's $B$-function
\be
  B(\a,\b)
  \=
  \frac{\Gamma(\a)\Gamma(\b)}{\Gamma(\a+\b)}
  \=
  \int_{0}^{1}\om_{\a,\b}\,.
\ee
Notice that the functional equations for the $\Gamma$ function descend to the
cohomology classes, for example,
\be
  d(x^{\a}(1-x)^{\b})
  \=
  \a\om_{\a,\b}-(\a+\b)\om_{\a+1,\b}
  \=
  -\b\om_{\a,\b}+(\a+\b)\om_{\a,\b+1}\,.
\ee

Coleman~\cite{Coleman:fermat} computed the Frobenius matrix for $H_\dR^1(X_N)$
in terms of the basis of holomorphic one-forms indexed by
$\a,\b\in(0,1)\cap\frac{1}{N}\BZ$ with $\a+\b\neq1$ and given by
\be
  v_{\a,\b}
  \=(\a+\b-\lfloor \a+\b\rfloor)^{\lfloor \a+\b\rfloor}
  [\om_{\a,\b}]\,.
\ee
He showed~\cite[Prop.1.4, Thm.1.7]{Coleman:fermat}
that the action of the Frobenius on $v_{\a,\b}$ is expressed in terms of the
$p$-adic $\Gamma$-function (sometimes called the Morita $\Gamma$-function
~\cite{Morita,Boyarsky}) by 
\be
  \varphi_p v_{[\a]_p,[\b]_p}
  \=
  (-1)^{\lfloor \a+\b\rfloor}
  p^{\lfloor 2-[\a]_p-[\b]_p\rfloor}
  \frac{\Gamma_p(\a+\b-\lfloor\a+\b\rfloor)}{\Gamma_p(\a)\Gamma_p(\b)}
  v_{\a,\b}\,,
\ee
with $[a]_m$ given by Equation~\eqref{abracket}.
If $p>2N$ is prime, this can be expressed in the basis
$\om$ by the formula
\be
\label{eq:colegam}
  \varphi_p [\om_{[\a]_p,[\b]_p}]
  \=
  p^{[\a+\b]_{p}+1-[\a]_{p}-[\b]_{p}}
  \frac{\Gamma_p(\a+\b)}{\Gamma_p(\a)\Gamma_p(\b)}
  \frac{\Gamma([\a+\b]_{p})}
  {\Gamma([\a]_{p}+[\b]_{p})}
  [\om_{\a,\b}]\,.
\ee
This follows form the fact that, with our lower bound on $p$,
$\a+\b-\lfloor\a+\b\rfloor$ does not contain a factor of $p$ and that
\be
  1>2-[\a]_p-[\b]_p-([\a+\b]_{p}+1-[\a]_{p}-[\b]_{p})\geq 1/p
\ee
is also an element of $\frac{1}{N}\BZ$ and hence is at least $0$.


One can associate a $q$-deformation of this form from the infinite Pochhammer
symbol. Indeed, for each $m$ and $q=\z_me^{\hbar}$, we define
\be
\label{eq:Bmin}
  B_m(\a,\b;q-\z_m)
  \=
  \frac{\Gamma([\a]_m+[\b]_m)}{\Gamma([\a]_m)\Gamma([\b]_m)} 
  \frac{(q^{\a+\b};q)_\infty(q;q)_\infty}
  {(q^{\a};q)_\infty(q^{\b};q)_\infty}
  \in(q-\z_m)^{-1}\BQ[\z_m][\![q-\z_m]\!] \,.
\ee
To see that the left hand side of the above equation lies in
$(q-\z_m)^{-1}\BQ[\z_m][\![q-\z_m]\!]$ and to compute the corresponding power
series expansion, we use Equation~\eqref{eq:qpinfm} which implies that with
$q=\z_m e^\hbar$, we have
\be
\begin{aligned}
  &B_m(\a,\b;q-\z_m)
  \=
  \frac{\Gamma([\a]_m+[\b]_m)}{\Gamma([\a+\b]_m)}
  (-m\hbar)^{[\a]_m+[\b]_m-[\a+\b]_m-1}
  \prod_{j=1}^{m-1}(1-\z_m^{j})^{\frac{1}{2}-\frac{j}{m}}\\
  &\times\!\!\!\!\prod_{m[\a+\b]_m-\a-\b\neq j=0}^{m-1}
  \!\!\!\!\!\!\!(1-\z_m^{\a+\b+j})^{\frac{1}{2}-\frac{\a+\b+j}{m}}
  \!\!\!\!\prod_{m[\a]_m-\a\neq j=0}^{m-1}
  \!\!\!\!\!\!\!(1-\z_m^{\a+j})^{-\frac{1}{2}+\frac{\a+j}{m}}
  \!\!\!\!\prod_{m[\b]_m-\b\neq j=0}^{m-1}
  \!\!\!\!\!\!\!(1-\z_m^{\b+j})^{-\frac{1}{2}+\frac{\b+j}{m}}\\
  &\times\exp\Big(-\sum_{\ell=2}^\infty
  (B_{\ell}([\a+\b]_m)+B_{\ell}(1)-B_{\ell}([\a]_m)-B_{\ell}([\b]_m))
  \frac{B_{\ell-1}(1)}{\ell-1}\frac{(m\hbar)^{\ell-1}}{\ell!}\\
  &+
  \sum_{\ell=2}^{\infty}
  \Big(\sum_{m[\a+\b]_m-\a-\b\neq j=0}^{m-1}
  \frac{B_{\ell}(\frac{\a+\b+j}{m})}{\ell!}\Li_{2-\ell}(\z_m^{\a+\b+j})
  +\sum_{j=1}^{m-1}\frac{B_{\ell}(\frac{j}{m})}{\ell!}\Li_{2-\ell}(\z_m^{j})\\
  &-\sum_{m[\a]_m-\a\neq j=0}^{m-1}
  \frac{B_{\ell}(\frac{\a+j}{m})}{\ell!}\Li_{2-\ell}(\z_m^{\a+j})
  -\sum_{m[\b]_m-\b\neq j=0}^{m-1}
  \frac{B_{\ell}(\frac{\b+j}{m})}{\ell!}\Li_{2-\ell}(\z_m^{\b+j})\Big)
  (m\hbar)^{\ell-1}\Big)\,.
\end{aligned}
\ee
This implies the previous inclusion~\eqref{eq:Bmin} since,
$[\a]_m+[\b]_m-[\a+\b]_m-1 \in \{0,-1\}$ and
the $m$-th power of the four $j$-products is equal to
\be
\prod_{j=1}^{m-1}(1-\z_m^{j})^{\a+\b+\langle
  -\a-\b\rangle_m-\a-\langle-\a\rangle_m-\b-\langle-\b\rangle_m}
  \=
  1\pmod{(\BQ[\z_m]^{\times})^m}\,.
\ee

We are interested in the $p$-adic properties of the collection
$(B_m(\a,\b;q-\z_m))_{m \geq 1, (m,N)=1}$. This is described in the
following theorem, which again involves the $p$-adic $\Gamma$-function
and illustrates that the $q$-Pochhammer symbol ``knows'' about the
$p$-adic $\Gamma$-function.

\begin{theorem}
\label{thm.beta}
Fix $\a,\b\in\frac{1}{N}\BZ$ and prime $p$ with $(p,N)=1$.
Then the series
\be
\log(q)B_m(\a,\b;q-\z_m)
\ee
is convergent for $|q-\z_m|_p<1$ and satisfies
\be
\begin{aligned}
  B_m(\a,\b;q-\z_m)
&\= 
  p^{[[\a]_m+[\b]_m]_{p}+1-[\a]_{pm}-[\b]_{pm}}
  \frac{\Gamma_p([\a]_m+[\b]_m)}{\Gamma_p([\a]_m)\Gamma_p([\b]_m)}
  \frac{\Gamma([[\a]_m+[\b]_m]_{p})}
  {\Gamma([\a]_{pm}+[\b]_{pm})} \\ 
&  \qquad\qquad \times B_{pm}(\a,\b;q-\z_{pm})\\
&\;\in\;
  (q-\z_{pm})^{-1}\BQ_p[\z_{pm}][\![q-\z_{pm}]\!]\,.
\end{aligned}
\ee
\end{theorem}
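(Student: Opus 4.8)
The plan is to reduce the statement to the known Frobenius structure of the Fermat curve, which after the identity~\eqref{eq:colegam} is itself a consequence of Coleman's computation of the Frobenius matrix in terms of the $p$-adic $\Gamma$-function. The key observation is that $B_m(\a,\b;q-\z_m)$ is, by definition~\eqref{eq:Bmin}, the product of the scalar $\Gamma([\a]_m+[\b]_m)/(\Gamma([\a]_m)\Gamma([\b]_m))$ with a ratio of infinite $q$-Pochhammer symbols, so the whole claim splits into (i) a gluing statement for the $q$-Pochhammer ratios near roots of unity, and (ii) a bookkeeping identity among the complex and $p$-adic $\Gamma$-factors. Concretely, I would first establish that the collection of series
\be
\label{prop.qbeta.aux}
  \frac{(q^{\a+\b};q)_\infty(q;q)_\infty}{(q^{\a};q)_\infty(q^{\b};q)_\infty}
\ee
glues across $\z_m \rightsquigarrow \z_{pm}$ up to an explicit scalar in $\BQ_p[\z_{pm}]$, namely the ratio obtained by writing $(q^{a};q)_\infty = \prod_{j=0}^{p-1}(q^{a+j};q^p)_\infty$ for each of the four entries $a \in \{\a,\b,\a+\b,1\}$ and peeling off the single $q^p$-Pochhammer factor that carries a pole.

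\textbf{Step 1: the $q$-Pochhammer gluing.} For $m$ coprime to the denominator of $a$, write $(q^{a};q)_\infty = \prod_{j=0}^{p-1}(q^{a+j};q^{pm})_\infty \cdot \big(\text{regrouping so the }\z_{pm}\text{-expansion appears}\big)$; more precisely, since $q^p$ near $q = \z_{pm}e^{\hbar}$ behaves like $\z_m e^{p\hbar}$, the Frobenius $\varphi_p$ acting on $R_{pm}$-series relates the $\z_m$-expansion of $(q^{a};q)_\infty$ to a $\z_{pm}$-expansion. Using the asymptotic formula~\eqref{eq:qpinfm} of Lemma~\ref{lem:qpzm} at level $m$ and at level $pm$ and dividing, all the exponential (polylogarithm and Bernoulli) tails cancel except for the one index $j$ in the level-$pm$ product with $\langle -a\rangle_{pm} \equiv j \pmod p$, which produces precisely a power of $p$ with exponent $\frac{1}{2}-[a]_{pm}$ adjusted against $\frac{1}{2}-[a]_m$, together with finitely many roots-of-unity factors $(1-\z_{pm}^{a+j})^{\frac{1}{2}-\frac{a+j}{pm}}$ whose product over the four entries reduces mod $p$-th powers to a unit. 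Collecting the power-of-$p$ contributions over $a \in \{\a+\b,1\}$ in the numerator and $a \in \{\a,\b\}$ in the denominator gives the exponent $1 - [\a]_{pm} - [\b]_{pm} + \lfloor\cdots\rfloor$ appearing in the theorem, and the $\Gamma$-scalars from~\eqref{eq:qpinfm} assemble into $\Gamma([\a]_{pm}+[\b]_{pm})/(\Gamma([\a]_{pm})\Gamma([\b]_{pm}))^{-1}$-type ratios. This is the computational heart, and it is essentially the $q$-lift of the argument that Coleman's Frobenius formula is equivalent to~\eqref{eq:colegam}.

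\textbf{Step 2: matching with the $p$-adic $\Gamma$-function.} Having expressed the gluing scalar as a ratio of complex $\Gamma$-values and a power of $p$, I would invoke the Gross--Koblitz-type relation between the relevant products of complex $\Gamma$-values at $[a]_m$-arguments and $\Gamma_p$ at the same arguments — this is exactly the manipulation that turns Coleman's formula into~\eqref{eq:colegam}, applied now with the extra layer of roots of unity of order $m$ (so $[\,\cdot\,]_m$ and $[\,\cdot\,]_{pm}$ both appear and one uses $[[a]_m]_p = [a]_{pm}$, valid since $m$ is coprime to $p$). The reflection and multiplication formulas for $\Gamma_p$ convert $\Gamma_p([\a]_m)\Gamma_p([\b]_m)/\Gamma_p([\a]_m+[\b]_m)$ times the relevant power of $p$ into exactly the factor stated. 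Finally, convergence of $\log(q)B_m(\a,\b;q-\z_m)$ for $|q-\z_m|_p<1$: the pole of $B_m$ at $\z_m$ is simple (established below~\eqref{eq:Bmin}), so $\log(q)B_m$ is holomorphic there, and the logarithmically-growing denominators of $\log(q)/(q-\z_m)$ recalled in the introduction (together with the fact that the remaining exponential series in~\eqref{eq:Bmin} has coefficients in $\BQ[\z_m]$ with denominators controlled by Bernoulli numbers, hence bounded $p$-denominators for $p$ coprime to $N$) give $p$-adic convergence on the open unit disc.

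\textbf{Main obstacle.} The main difficulty is Step 1: carefully tracking, in the ratio of the level-$m$ and level-$pm$ asymptotic expansions~\eqref{eq:qpinfm}, exactly which single polylogarithm/Bernoulli term survives the cancellation for each of the four entries $\a,\b,\a+\b,1$, and verifying that the surviving roots-of-unity prefactors combine (over the four entries, and using $\a+\b$ balancing the $\a,\b,1$ contributions) into a perfect $p$-th power in $\BQ[\z_{pm}]^\times$ so that they disappear from the cohomology class — equivalently, that the scalar is genuinely in $\BQ_p[\z_{pm}]$ and not merely in a larger field. Once the power-of-$p$ exponent and the $\Gamma$-scalar are pinned down, Step 2 is a formal consequence of the standard functional equations for $\Gamma_p$, exactly as in the passage from Coleman's formula to~\eqref{eq:colegam}.
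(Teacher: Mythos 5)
There is a genuine gap, and it sits exactly where you flag your ``main obstacle.'' The statement contains no Frobenius twist and no substitution $q\mapsto q^p$: both $B_m(\a,\b;q-\z_m)$ and $B_{pm}(\a,\b;q-\z_{pm})$ contain, by \eqref{eq:Bmin}, the \emph{same} function of $q$, namely the ratio $(q^{\a+\b};q)_\infty(q;q)_\infty/((q^{\a};q)_\infty(q^{\b};q)_\infty)$; the index $m$ enters only through the classical $\Gamma$-prefactor and through the point of expansion. So there is nothing to ``glue'' at the level of $q$-Pochhammer symbols: once one knows $\log(q)B_m$ is holomorphic on $|q|_p=1$ (so that re-expansion at $\z_{pm}$ is legitimate), the whole content of the identity is the scalar ratio of the two $\Gamma$-prefactors. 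Your Step 1 — dividing the archimedean expansions \eqref{eq:qpinfm} taken at level $m$ and at level $pm$ and expecting the Bernoulli/polylogarithm tails to cancel up to one surviving term — is not a meaningful operation: those are asymptotic expansions in different local parameters at different points, the powers of $p$ in the theorem are $p$-adic and do not emerge from archimedean asymptotics, and the roots-of-unity prefactors you would need to combine into a $p$-th power (your admitted obstacle) never have to be confronted in a correct argument. Likewise the appeal to $\varphi_p$ acting on $R_{pm}$-series addresses a statement that is not being made here.

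The paper's proof runs differently and more elementarily, and your proposal is missing its engine: reduction to nonnegative integer parameters plus $p$-adic continuity. For $\a,\b\in\BZ_{\geq0}$ the Pochhammer ratio lies in $\BZ[(1-q^j)^{-1}\,|\,j\in\BZ_{>0}]$, i.e.\ it is a rational function with at most simple poles at roots of unity, which gives holomorphy of $\log(q)B_m$ on $|q|_p=1$; the scalar identity is then the elementary formula $\Gamma_p(pn-\ell)=(-1)^{pn-\ell}\Gamma(pn-\ell)/(p^{n-1}\Gamma(n))$ applied to the prefactors (no Gross--Koblitz input is needed), and both statements extend to $\a,\b\in\frac{1}{N}\BZ$ because the coefficients of the $(q-\z_m)$- and $(q-\z_{pm})$-expansions are $p$-adically continuous in the parameters and $\BZ_{>0}$ is dense in $\BZ_p$. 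Your convergence argument also falls short of the claim: a simple pole at $\z_m$ together with ``Bernoulli-controlled'' denominators of the exponential factor does not give convergence on the whole disc $|q-\z_m|_p<1$ (exponentiating a series a priori introduces factorial denominators); it is the rational-function structure for integer parameters, transported by continuity, that yields it.
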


\begin{proof}
Suppose that $\a,\b\in\BZ_{\geq0}$, then
\be
  \frac{(q^{\a+\b};q)_\infty(q;q)_\infty}{(q^{\a};q)_\infty(q^{\b};q)_\infty}
  \in\BZ[(1-q^j)^{-1}\;|\;j\in\BZ_{>0}]\,.
\ee
Therefore, $B_m(\a,\b;q-\z_m)$ is a rational function with at
most simple poles at roots of unity.
It follows that $\log(q)B_m(\a,\b;q-\z_m)$ is holomorphic for $|q|_p=1$.
Notice that the coefficients in the expansion of $\log(q)B(\a,\b;q-\z_m)$ are
continuous and therefore we see that, since $\frac{1}{N}\BZ\subseteq\BZ_p$, we must
also have $\log(q)B_m(\a,\b;q-\z_m)$ holomorphic for $\a,\b\in\frac{1}{N}\BZ$.
This completes the proof of the first statement.

For the second recall that for integer $n,\ell$ with $0\leq\ell<p$ we have
\be
  \Gamma_p(pn-\ell)
  \=
  (-1)^{pn-\ell}\frac{\Gamma(pn-\ell)}{p^{n-1}\Gamma(n)}\,.
\ee
Notice that $[\a]_{m}=
p[\a]_{pm}-\ell$ for some $\ell\in[0,p)\cap\BZ$.
Therefore, for integral $\a$, we find that
\be
  \frac{\Gamma([\a]_m)}{\Gamma([\a]_{pm})}
  \=
  \Gamma_p([\a]_m)(-1)^{[\a]_m}p^{[\a]_{pm}-1}\,.
\ee
Therefore, for all integral $\a,\b$ we find that
\be
\begin{aligned}
  &\frac{B_{pm}(\a,\b;q-\z_{pm})}{B_m(\a,\b;q-\z_m)}
  \=
  \frac{\Gamma([\a]_m)\Gamma([\b]_m)}
  {\Gamma([\a]_m+[\b]_m)}
  \frac{\Gamma([\a]_{pm}+[\b]_{pm})}{\Gamma([\a]_{pm})\Gamma([\b]_{pm})}\\
  &\=
  p^{[\a]_{pm}+[\b]_{pm}-[[\a]_m+[\b]_m]_{p}-1}
  \frac{\Gamma_p([\a]_m)\Gamma_p([\b]_m)}
  {\Gamma_p([\a]_m+[\b]_m)}\frac{\Gamma([\a]_{pm}+[\b]_{pm})}{
    \Gamma([[\a]_m+[\b]_m]_{p})}
  \in\BQ[\![q-\z_{pm}]\!]\,.
\end{aligned}
\ee
Again the density of $\BZ_{>0}$ in $\BZ_p$ and the continuity of the coefficients
in the $q-\z_{pm}$ expansion implies this equality for all
$\a,\b\in\frac{1}{N}\BZ$.
\end{proof}

Consider the collection of series
\be
  g_{\a,\b,m}(q-\z_m)
  \=
  B_m(\a,\b;q-\z_m)
  [\om_{[\a]_m,[\b]_m}]
  \in H_{\dR}^{1}(X_N/\BZ[1/(2N)!])\otimes\BQ[\![q-\z_m]\!]
\ee
for $m \geq 1$ with $(m,N)=1$.   
The previous theorem and Coleman's formula~\eqref{eq:colegam} implies the following:

\begin{corollary}
\label{cor.fermat}  
For all $\a,\b \in (0,1)\cap\tfrac{1}{N}\BZ$ with $\a+\b\neq1$,
the collection
$g_{\a,\b}(q)$ is an element of the Habiro cohomology of
the Fermat curve $X_N$, i.e.,
\be
  g_{\a,\b}(q)\in\calH^1_\nv(X_N/\BZ[1/(2N)!])\,.
\ee
\end{corollary}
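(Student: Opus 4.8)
The plan is to check directly that the collection $g_{\a,\b}(q)=(g_{\a,\b,m}(q-\z_m))_{m\geq1,(m,N)=1}$ satisfies the two defining conditions of Definition~\ref{def.hab} for the module $\calH^1_\nv(X_N/\BZ[1/(2N)!])=\calH(H^1_{\dR}(X_N/\BZ[1/(2N)!]))$: $p$-adic convergence of each $(q-\z_m)$-series on $|q-\z_m|_p<1$, and the Frobenius gluing relating levels $m$ and $pm$ for every prime $p\nmid(2N)!$ (equivalently $p>2N$); as in Theorem~\ref{thm.1} we permit simple poles at roots of unity, i.e.\ we check holomorphy of $\log(q)\,g_{\a,\b,m}$. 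First I would record two elementary facts that make the indices line up. The first is that $[\a]_m,[\b]_m\in(0,1)\cap\tfrac1N\BZ$ with $[\a]_m+[\b]_m\neq1$: a bracket $[\cdot]_m$ of a non-integer is never $1$, while $[\a]_m+[\b]_m$ equals $[\a+\b]_m$ or $[\a+\b]_m+1$ according as $\langle-\a\rangle_m+\langle-\b\rangle_m<m$ or not, and $[\a+\b]_m\in(0,1)$ since $\a+\b\notin\BZ$; hence $\om_{[\a]_m,[\b]_m}$ is a bona fide de Rham class to which Coleman's formula~\eqref{eq:colegam} applies. The second is the combinatorial identity
\be
[[\a]_m]_p=[\a]_{pm},\qquad [[\b]_m]_p=[\b]_{pm}\,,
\ee
which I would deduce from the definition of $\langle\cdot\rangle$ by checking $\langle-\a\rangle_{pm}=\langle-\a\rangle_m+m\langle-[\a]_m\rangle_p$: the right-hand side lies in $[0,pm)$, reduces to $-\a$ mod $m$, and reduces to $-\a$ mod $p$ (since $m\langle-[\a]_m\rangle_p\equiv-\a-\langle-\a\rangle_m\bmod p$), so it equals $\langle-\a\rangle_{pm}$ by uniqueness. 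With these in hand, convergence is immediate: the first assertion of Theorem~\ref{thm.beta} says $\log(q)\,B_m(\a,\b;q-\z_m)$ converges for $|q-\z_m|_p<1$, and multiplying by the $q$-independent class $[\om_{[\a]_m,[\b]_m}]$ changes nothing.

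For the gluing — the crux — fix $p\nmid(2N)!$ and $m$ coprime to $N$. Re-expanding $g_{\a,\b,m}$ at $\z_{pm}$ amounts to re-expanding $B_m(\a,\b;q-\z_m)$ there (the class is constant), so Theorem~\ref{thm.beta} gives
\be
g_{\a,\b,m}\bigl(q-\z_{pm}+(\z_{pm}-\z_m)\bigr)\=c_{p,m}\;B_{pm}(\a,\b;q-\z_{pm})\;[\om_{[\a]_m,[\b]_m}]\,,
\ee
where $c_{p,m}$ is the scalar appearing there, built from $\Gamma_p([\a]_m),\Gamma_p([\b]_m),\Gamma_p([\a]_m+[\b]_m)$, two ordinary $\Gamma$-values, and the power $p^{[[\a]_m+[\b]_m]_p+1-[\a]_{pm}-[\b]_{pm}}$. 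On the other side, $\varphi_p$ acts on $\calH(H^1_{\dR})$ only through the Frobenius on $H^1_{\dR}(X_N)$ — it fixes $q-\z_{pm}$ and the cyclotomic coefficients of $B_{pm}$ — so $\varphi_p\,g_{\a,\b,pm}=B_{pm}(\a,\b;q-\z_{pm})\cdot\varphi_p[\om_{[\a]_{pm},[\b]_{pm}}]$, and Coleman's formula~\eqref{eq:colegam} applied with parameters $[\a]_m,[\b]_m$ gives $\varphi_p[\om_{[\a]_{pm},[\b]_{pm}}]=c'_{p,m}\,[\om_{[\a]_m,[\b]_m}]$, with $c'_{p,m}$ built from the same $p$-adic $\Gamma$-values but with $[[\a]_m]_p,[[\b]_m]_p$ in place of $[\a]_{pm},[\b]_{pm}$. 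By the identity $[[\a]_m]_p=[\a]_{pm}$ (and likewise for $\b$) we get $c'_{p,m}=c_{p,m}$, so the two sides of the gluing agree; this proves $g_{\a,\b}(q)\in\calH^1_\nv(X_N/\BZ[1/(2N)!])$.

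The reason the corollary is short is that no new analysis is required: the meromorphy of $B_m$ on $|q|_p=1$ with its precise $p$-adic $\Gamma$-factor (Theorem~\ref{thm.beta}) and the $p$-adic $\Gamma$-factor of the Frobenius on the Fermat cohomology (Coleman, \eqref{eq:colegam}) produce the \emph{same} scalar. The only genuinely nontrivial step is thus the reconciliation of these two scalars: verifying $[[\a]_m]_p=[\a]_{pm}$ and that the exponent $[[\a]_m+[\b]_m]_p+1-[\a]_{pm}-[\b]_{pm}$ from Theorem~\ref{thm.beta} matches $[\gamma+\delta]_p+1-[\gamma]_p-[\delta]_p$ from \eqref{eq:colegam} with $\gamma=[\a]_m,\delta=[\b]_m$. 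A secondary caution — the place where a careless argument could go wrong — is to fix the convention that $\varphi_p$ on $\calH(H^1_{\dR})$ does not act on the coefficient ring $\BQ_p[\z_{pm}]$, so that it commutes with multiplication by the scalar series $B_{pm}(\a,\b;q-\z_{pm})$; with that convention the argument closes.
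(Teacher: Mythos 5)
Your proposal is correct and coincides with the paper's own (essentially one-line) proof: the paper likewise deduces the corollary by combining Theorem~\ref{thm.beta} with Coleman's formula~\eqref{eq:colegam}, the scalar match resting on exactly the bracket identity $[[\a]_m]_p=[\a]_{pm}$ that you verify. One minor remark: your uniqueness argument for that identity implicitly assumes $(p,m)=1$ (a CRT step); to cover $p\mid m$ as well, note instead that $\langle-\a\rangle_m+m\langle-[\a]_m\rangle_p$ lies in $[0,pm)$ and satisfies $\a+\langle-\a\rangle_m+m\langle-[\a]_m\rangle_p=pm[[\a]_m]_p$ with $[[\a]_m]_p$ of denominator coprime to $pm$, so it equals $\langle-\a\rangle_{pm}$ in all cases.
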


\subsection{Basics on balanced hypergeometric series}
\label{sub.nfn}

In this section, we recall some of the basic properties of the hypergeometric
function ${}_{n+1}F_{n}$ that are well-known and may be found for example in
Gasper--Rahman~\cite{Gasper}. Recall that for $\a\in\BQ^{n+1}$ and
$\b\in\BQ^{n+1}$ with $\b_{n+1}=1$, we have
\be
  {}_{n+1}F_n(\a;\b;\lambda)
  \=
  \sum_{k\in\BZ_{\geq0}}
  \frac{(\a_1)_k\cdots(\a_{n+1})_k}{(\b_1)_k\cdots(\b_{n})_kk!}
  \lambda^{k}\,.
\ee
We assume that $\a_i$ and $\b_j$ are pairwise disjoint. 
For $\b_j - \a_j \not\in \BZ_{\leq 0}$ and $\b_j > \a_j$ (see \cite{RV:hyper}),
these holomorphic functions near $\lambda=0$ can be described by
the integral formula
\be
  {}_{n+1}F_n(\a;\b;\lambda)
  \=
  \Big(\prod_{j=1}^{n}\frac{\Gamma(\b_j)}{\Gamma(\a_j)\Gamma(\b_j-\a_j)}\Big)
  \int_0^1\!\!\cdots\!\!\int_0^1
  \om(\a;\b;\lambda)
\ee
where
\be
\label{omab}
\om(\a;\b;\lambda)
  \=
  (1-\lambda x_1\cdots x_{n})^{-\a_{n+1}}
  \prod_{j=1}^{n}
  x_j^{\a_j}(1-x_j)^{\b_j-1-\a_j}\frac{dx_j}{x_j}\,.
\ee
For example, for $\lambda$ near $0$, we have
\be
  {}_2F_1(1/2,1/2;1;\lambda)
  \=
  \frac{1}{\pi}
  \int_0^1\frac{dx}{\sqrt{x(1-x)(1-\lambda x)}}
\ee

These functions satisfy the elementary differential equations
\be
\label{eq:de}
P{\;}_{n+1}F_{n}(\a;\b;\lambda)=0, \qquad
P \= \th\prod_{j=1}^{n}(\th+\b_j-1)-\lambda\prod_{j=1}^{n+1}(\th+\a_j) \in \W\,.
\ee
We can construct a fundamental basis of solutions by applying Frobenius's method.
To phrase it, let
$\overline{\b}=(\overline{\b}_1,\dots, \overline{\b}_{n+1}) \in \BQ/\BZ$
denote the $\BQ/\BZ$-reduction of $\b$ and $\b_i^\max=\max\{\b_j \,\, |
\overline{\b}_j=\overline{\b}_i \}$ and $d_i$ be the cardinality of the
set $\{\b_j \,\, | \overline{\b}_j=\overline{\b}_i \}$.

The solutions are all of the form
\be
  {}_{n+1}F_n(\a+\rho;\b+\rho;\lambda)\lambda^{\rho}\,,
\ee
where we take $\rho=1-\b_i^\max+\epsilon$ and consider the coefficients of the
$\epsilon$ expansion to the power $\mathrm{O}(\epsilon^{d_i})$. 

When all $\overline{\b}_i\in\BQ/\BZ$ are distinct, the multiplicities
$d_i=1$ and therefore the solutions are all given for $j=1,\dots,n$ by
\be
  {}_{n+1}F_n(\a+1-\b_j;\b+1-\b_j;\lambda)\lambda^{1-\b_j}\,.
\ee

By considering the exact forms for $i=1,\dots,n$ given by
\be
  d\Big(
  \frac{\Gamma(\b_i)}{\Gamma(\a_i)\Gamma(\b_i-\a_i)}
  \frac{x_i^{\a_i}(1-x_i)^{\b_i-1-\a_i}}
  {(1-\lambda x_1\cdots x_{n})^{\a_{n+1}}}
  \prod_{i\neq j=1}^n
  \frac{\Gamma(\b_j)}{\Gamma(\a_j)\Gamma(\b_j-\a_j)}
  x_j^{\a_j}(1-x_j)^{\b_j-1-\a_j}\frac{dx_j}{x_j}\Big),
\ee
one can show that $P\om(\a;\b;\lambda)$ is an exact form. This implies that
the integral over any closed contour will produce a solution to the differential
equation.

There is a natural $q$-deformation of these hypergeometric sums originally
introduced by Heine, given by 
\be
  {}_{n+1}\phi_{n}(\a;\b;\lambda,q)
  \=
  \sum_{k\in\BZ_{\geq0}}
  \frac{(q^{\a_1};q)_k\cdots(q^{\a_{n+1}};q)_k}
  {(q^{\b_1};q)_k\cdots(q^{\b_{n}};q)_k(q;q)_k}
  \lambda^{k}\,.
\ee
These functions satisfy the $q$-difference equation
\be
\label{eq:deq}
\Big((1-\sigma)\prod_{j=1}^n(1-q^{\b_i-1}\sigma)
-\lambda\prod_{j=1}^{n+1}(1-q^{\a_i}\sigma)\Big)\;
  {}_{n+1}\phi_{n}(\a;\b;\lambda,q)
  \=0\,.
\ee
(These functions were shown to be quantum modular forms in~\cite{GW:qmod}.)
By Frobenius's method applied to the $q$-difference equations, one can construct
an analogous basis of solutions
\be
  {}_{n+1}\phi_n(\a+\rho;\b+\rho;\lambda,q)\lambda^{\rho}
\ee
again taking $\rho=1-\b_i^\max+\epsilon$ and considering the coefficients
of the $\epsilon$ expansion to the power $\mathrm{O}(\epsilon^{d_i})$.

Using~\eqref{eq:qp.q1}, we can compare the $q\to1$ limit of $\phi$ with $F$ 
as follows
\be
  {}_{n+1}\phi_n(\a+\rho;\b+\rho;\lambda,q)\lambda^{\rho}
  \=
  {}_{n+1}F_n(\a+\rho;\b+\rho;\lambda)\lambda^{\rho}
  +\mathrm{O}(q-1)\,.
\ee

These hypergeometric function have a simple relation when one shifts entries
of $\a$ or $\b$ by $1$. Explicitly if $\delta_i$ is the vector with $i$th
coordinate $1$ and all other coordinates zero, we have
\be
\label{eq:fashift}
\begin{aligned}
  \a_i\;{}_{n+1}F_{n}(\a+\delta_i;\b;\lambda)
  & \=
  (\a_i+\th)\;{}_{n+1}F_{n}(\a;\b;\lambda)\,,
\\
  (1-q^{\a_i})\;{}_{n+1}\phi_{n}(\a+\delta_i;\b;\lambda,q)
  & \=
  (1-q^{\a_i}\sigma)\;{}_{n+1}\phi_{n}(\a;\b;\lambda,q)\,.
\end{aligned}
\ee
Therefore, without loss of generality, we will assume
that $\a,\b\in(0,1]^{n+1}$.

\subsection{From $q$-deformations to $D$-modules}

We are interested in the full $q-1$ series expansion of the $q$-deformation
${}_{n+1}\phi_n$. In fact, we want the $q-1$ expansion of a fundamental basis
of solutions to the $q$-Picard-Fuchs equation obtained by the Frobenius method.
A convenient basis to work in for these computations is given by
\be
\label{eq:fnorm}
  {\;}_{n+1}\widetilde\phi_n(\a+\rho;\b+\rho;\lambda,q)\lambda^{\rho} \= 
  C_\rho(\a;\b,q) 
  {\;}_{n+1}\phi_n(\a+\rho;\b+\rho;\lambda,q)\lambda^{\rho}\,.
\ee
where
\be
\label{Cdef}
  C_\rho(\a;\b,q) \=
  \prod_{j=1}^{n+1}
  \frac{(q^{\b_j+\rho};q)_\infty(q^{\a_j};q)_\infty}{
    (q^{\a_j+\rho};q)_\infty(q^{\b_j};q)_\infty} \,.
\ee
Equation~\eqref{eq:qpinf} implies that 
\be
C_\rho(\a;\b,q) \in
\prod_{j=1}^{n+1}\frac{\Gamma(\a_j+\rho)\Gamma(\b_j)}{
  \Gamma(\a_j)\Gamma(\b_j+\rho)} \BQ[\![q-1]\!]
\ee
and further implies that there exists $D_{1}(y,q-1)\in\BQ[y][\![q-1]\!]$ so that 
\be
{\;}_{n+1}\widetilde\phi_n(\a+\rho;\b+\rho;\lambda,q)\lambda^{\rho}
 \= D_{1}(\th,q-1) \prod_{j=1}^{n+1}
  \frac{\Gamma(\a_j+\rho)\Gamma(\b_j)}{\Gamma(\a_j)\Gamma(\b_j+\rho)}
  {\;}_{n+1}F_n(\a+\rho;\b+\rho;\lambda)\lambda^{\rho}\,.
\ee
We are also interested in the expansions when $q$ is near $\z_m$.
To understand this recall $[a]_m$ from Equation~\eqref{abracket}.
Note that if $a\in(0,1]$ then $[a]_m\in(0,1]$ with the same denominator as $a$.
Then we consider the congruence sums for $i=0,\dots,m-1$
\be
\label{eq:phinorm}
  \prod_{j=1}^{n+1}
  \frac{(q^{\b_j+i+m[\rho]_m};q)_\infty(q^{\a_j};q)_\infty
  }{(q^{\a_j+i+m[\rho]_m};q)_\infty(q^{\b_j};q)_\infty
  }
  \sum_{k\in\BZ_{\geq0}}
  \frac{(q^{\a_1+i+m[\rho]_m};q)_{mk}\cdots(q^{\a_{n+1}+i+m[\rho]_m};q)_{mk}}
  {(q^{\b_1+i+m[\rho]_m};q)_{mk}\cdots(q^{\b_{n+1}+i+m[\rho]_m};q)_{mk}}
  \lambda^{k+\rho}\,.
\ee
Using Equation~\eqref{eq:qpinfm} of Lemma~\ref{lem:qpzm}, we find that for
$q$ near $\z_m$, there exists
$D_{m,i}(y,q-\z_m)\in\BQ[y,\z_m][\![q-\z_m]\!]$ so that
\be
\begin{aligned}
  &\prod_{j=1}^{n+1}\frac{(q^{\b_j+i+m[\rho]_m};q)_\infty(q^{\a_j};q)_\infty}
  {(q^{\a_j+i+m[\rho]_m};q)_\infty(q^{\b_j};q)_\infty}
  \frac{(q^{\a_j+i+m[\rho]_m};q)_{mk}}
  {(q^{\b_j+i+m[\rho]_m};q)_{mk}}\\
  &=
  \prod_{j=1}^{n+1}\frac{\Gamma([\a_j+i]_m+[\rho]_m+k)\Gamma([\b_j]_m)}
  {\Gamma([\b_j+i]_m+[\rho]_m+k)\Gamma([\a_j]_m)}
  D_{m,i}(\tfrac{mk+i+m[\rho]_m}{m},q-\z_m)
\end{aligned}
\ee
Therefore, we see that~\eqref{eq:phinorm} is equal to
\be\label{eq:dmj}
\begin{aligned}
  D_{m,i}(\tfrac{1}{m}\th,q-\z_m)
  \sum_{k\in\BZ}
  \prod_{j=1}^{n+1}\Big(\frac{\Gamma([\a_j+i]_m+[\rho]_m+k)\Gamma([\b_j]_m)}
  {\Gamma([\b_j+i]_m+[\rho]_m+k)\Gamma([\a_j]_m)}\Big)
  \lambda^{mk+i+m[\rho]_m} \,.
\end{aligned}
\ee
Altogether, we find the following:
\begin{lemma}
\label{lem:Dexists}
There exists $D_m(y,\lambda,q-\z_m)\in\BQ[\th,\lambda,\z_m][\![q-\z_m]\!]$
such that
\be
\begin{aligned}
  & {\;}_{n+1}\widetilde\phi_n(\a+\rho;\b+\rho;\lambda,q)\lambda^{\rho}\\
  &\=
  D_{m}(\tfrac{1}{m}\th,\lambda,q-\z_m)
  \prod_{j=1}^{n+1}\frac{\Gamma([\a_j]_m+[\rho]_m)\Gamma([\b_j]_m)}
  {\Gamma([\b_j]_m+[\rho]_m)\Gamma([\a_j]_m)}
  \sum_{k\in\BZ}
  \prod_{j=1}^{n+1}\Big(\frac{([\a_j]_m+[\rho]_m)_k}
  {([\b_j]_m+[\rho]_m)_k}\Big)
  \lambda^{mk+m[\rho]_m} \,.
\end{aligned}
\ee
\end{lemma}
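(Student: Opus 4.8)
The plan is to deduce Lemma~\ref{lem:Dexists} by combining the two computations that precede it, namely the normalisation factor $C_\rho(\a;\b,q)$ of~\eqref{eq:fnorm}--\eqref{Cdef} and the $m$-dissection of the Heine series used in~\eqref{eq:phinorm}. First I would recall that, by definition of the Frobenius-method solution, ${}_{n+1}\phi_n(\a+\rho;\b+\rho;\lambda,q)\lambda^\rho$ is a sum over $k\in\BZ_{\geq0}$ of $\lambda^{k+\rho}$ times a ratio of $q$-Pochhammer symbols $(q^{a};q)_{k}$; writing each such $(q^{a};q)_k$ as $(q^{a};q)_\infty/(q^{a+k};q)_\infty$ via~\eqref{qpk}, the whole summand becomes a ratio of infinite $q$-Pochhammers. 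Multiplying by $C_\rho(\a;\b,q)$ then exactly cancels the $\rho$-independent infinite products and leaves, for each $j$, the factor $(q^{\b_j+\rho};q)_\infty/(q^{\a_j+\rho};q)_\infty$ absorbed into the shifted argument. To access the expansion at a primitive $m$-th root of unity $\z_m$ I would split the index $k$ according to its residue $i$ modulo $m$ — writing $k=mk'+i$ — so that each $q$-Pochhammer acquires an argument of the form $q^{a+i+m[\rho]_m}$ and the relevant asymptotic input becomes Lemma~\ref{lem:qpzm} rather than Lemma~\ref{lem.q1}. This is precisely the content of~\eqref{eq:phinorm}.

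The second step is purely formal: apply Lemma~\ref{lem:qpzm} (via Equation~\eqref{eq:qpinfm}) to each infinite $q$-Pochhammer appearing after the cancellation. The lemma says that $(q^{a};q)_\infty$ near $\z_m$ equals $\sqrt{2\pi}(-m\hbar)^{1/2-[a]_m}/\Gamma([a]_m)$ times a root-of-unity prefactor and an exponential of a power series in $m\hbar$ with no constant term; crucially the prefactors and exponentials are the \emph{same} shape for numerator and denominator arguments that differ by integers, so in each ratio $(q^{a+i+m[\rho]_m};q)_\infty/(q^{b+i+m[\rho]_m};q)_\infty$ all the transcendental pieces collect into a single unit power series in $q-\z_m$ over $\BQ[\z_m]$, while the $\Gamma$-factors reassemble exactly into the ratio $\Gamma([\a_j+i]_m+[\rho]_m+k)\Gamma([\b_j]_m)\big/\big(\Gamma([\b_j+i]_m+[\rho]_m+k)\Gamma([\a_j]_m)\big)$ displayed before~\eqref{eq:dmj}. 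Since $\th$ acts on $\lambda^{k}$ as multiplication by $k$, I can replace the discrete variable $(mk+i+m[\rho]_m)/m$ by the operator $\frac{1}{m}\th$ acting on the whole series, which turns the $k$-dependent power series $D_{m,i}$ into the differential operator $D_{m,i}(\tfrac1m\th,q-\z_m)$ of~\eqref{eq:dmj}. Summing over the $m$ residues $i$ and using additivity of $[\cdot]_m$ to identify $[\a_j+i]_m+[\rho]_m$ with $[\a_j]_m+[\rho]_m$ up to an integer shift absorbed into the Pochhammer index then collapses everything into a single operator $D_m$ and the single $\Gamma$-prefactor $\prod_j \Gamma([\a_j]_m+[\rho]_m)\Gamma([\b_j]_m)/(\Gamma([\b_j]_m+[\rho]_m)\Gamma([\a_j]_m))$ in front of $\sum_k \prod_j ([\a_j]_m+[\rho]_m)_k/([\b_j]_m+[\rho]_m)_k\,\lambda^{mk+m[\rho]_m}$, which is the claimed formula.

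The last thing to check is the integrality/membership claim, that $D_m(y,\lambda,q-\z_m)\in\BQ[\th,\lambda,\z_m][\![q-\z_m]\!]$ — i.e.\ that it really is a \emph{polynomial} in $\th$ (equivalently in $y$) with power-series coefficients in $q-\z_m$, and in particular has no $\lambda$-denominators. This follows because each $D_{m,i}$, being built from finitely many ratios of the explicit exponential-of-power-series expressions in Lemma~\ref{lem:qpzm} evaluated at arguments linear in $k$, is manifestly a power series in $q-\z_m$ whose coefficients are polynomials in $k$ (hence in $\frac1m\th$), with leading term $1$ at $q=\z_m$; the $\lambda$-dependence enters only through the finitely many powers $\lambda^i$ ($0\le i<m$) picked up in reindexing the residue classes. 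I do not expect a genuine obstacle here, only bookkeeping; the one point that deserves care — and the step I would flag as the crux — is verifying that after the $C_\rho$-cancellation the surviving root-of-unity prefactors from~\eqref{eq:qpinfm} (the products $\prod_{j}(1-\z_m^{a+j})^{1/2-(a+j)/m}$) genuinely cancel between the numerator and denominator $q$-Pochhammers, so that no fractional powers of cyclotomic units leak into $D_m$. This is exactly the type of cancellation already carried out by hand for the $B_m$-series in Section~\ref{sec:fermat} (the line showing the $m$-th power of the four $j$-products is $1$ modulo $m$-th powers), and the same counting of exponents $\a+\b+\langle-\a-\b\rangle_m-\cdots$ applies verbatim once one notes that here the numerator and denominator arguments of a given ratio differ by an integer, so $[a]_m$ and $[b]_m$ pair off and the exponent telescopes to an integer.
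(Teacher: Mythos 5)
Your first two paragraphs reproduce the computation the paper carries out \emph{before} the lemma, namely the normalisation by $C_\rho$, the dissection of the sum into residue classes modulo $m$, and the existence of the per-class operators $D_{m,i}(\tfrac1m\th,q-\z_m)$ — this is exactly \eqref{eq:phinorm} and \eqref{eq:dmj}, obtained from Lemma~\ref{lem:qpzm}, and that part is fine. The gap is in the step where you pass from the $m$ residue-class sums to the single reference series: you assert that $[\a_j+i]_m+[\rho]_m$ agrees with $[\a_j]_m+[\rho]_m$ ``up to an integer shift absorbed into the Pochhammer index''. This absorption fails. One has $[\a_j+i]_m=[\a_j]_m+s_{ij}$ and $[\b_j+i]_m=[\b_j]_m+t_{ij}$ with $s_{ij},t_{ij}\in\{0,1\}$, and $s_{ij}=1$ exactly when $\langle-\a_j\rangle<i$ (similarly for $t_{ij}$); the shifts therefore depend on $j$ and differ between numerator and denominator parameters, so no reindexing $k\mapsto k+\mathrm{const}$ of the bilateral sum removes them. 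The leftover factor in the $i$-th class is $\prod_j([\a_j]_m+[\rho]_m+k)^{s_{ij}}([\b_j]_m+[\rho]_m+k)^{-t_{ij}}$, and the denominator factors are \emph{not} polynomials in $k$, so they cannot be converted directly into a polynomial in $\tfrac1m\th$ as your argument requires. The paper's proof supplies precisely the missing ingredient: using the parameter-shift (contiguity) relations \eqref{eq:fashift} it produces, for each $i$, an operator $S_i\in\W$ with polynomial coefficients in $\lambda$ and $\th$ such that the $i$-th congruence sum equals $\lambda^i S_i$ applied to the reference sum, and then sets $D_m=\sum_{i=0}^{m-1}D_{m,i}(\tfrac1m\th,q-\z_m)\,\lambda^i S_i$. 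Without constructing these $S_i$ (or some equivalent device), your argument does not yield the stated form of $D_m$ in $\BQ[\th,\lambda,\z_m][\![q-\z_m]\!]$; note also that the $\lambda$-dependence of $D_m$ comes from the $S_i$, not merely from the powers $\lambda^i$ as you suggest.

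Relatedly, the point you flag as the crux — cancellation of the cyclotomic prefactors $\prod_j(1-\z_m^{a+j})^{\frac12-\frac{a+j}{m}}$ after the $C_\rho$-normalisation — is not where the difficulty of this lemma lies. That cancellation only concerns the claim that the coefficients of the individual $D_{m,i}$ lie in $\BQ[\z_m]$, which is part of the display preceding \eqref{eq:dmj} and is handled by Lemma~\ref{lem:qpzm} in the same way as for $B_m$ in Section~\ref{sec:fermat}; the actual content of the proof of Lemma~\ref{lem:Dexists} is the reduction of the $i$-dependent bracket parameters $([\a_j+i]_m,[\b_j+i]_m)$ to the fixed ones $([\a_j]_m,[\b_j]_m)$ via Weyl-algebra operators, which your proposal omits.
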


\begin{proof}
We can always increase the arguments of the $\Gamma$ functions in the
numerator and decrease their arguments in the denominator by applying the
relations~\eqref{eq:fashift}. Therefore, for every $i=0,\dots,m-1$,
there exists $S_i\in\W$ so that
\be
\begin{aligned}
  &\sum_{k\in\BZ}
  \prod_{j=1}^{n+1}\frac{\Gamma([\a_j+i]_m+[\rho]_m+k)\Gamma([\b_j]_m)}
  {\Gamma([\b_j+i]_m+[\rho]_m+k)\Gamma([\a_j]_m)}
  \lambda^{mk+i+m[\rho]_m}\\
  &\!\!=
  \lambda^i
  S_i
  \sum_{k\in\BZ}
  \prod_{j=1}^{n+1}\frac{\Gamma([\a_j]_m+[\rho]_m+k)\Gamma([\b_j]_m)}
  {\Gamma([\b_j]_m+[\rho]_m+k)\Gamma([\a_j]_m)}
  \lambda^{mk+m[\rho]_m}\!.
\end{aligned}
\ee
Therefore, we define
\be
  D_m(\tfrac{1}{m}\th,\lambda,q-\z_m)
  \=
  \sum_{i=0}^{m-1}
  D_{m,i}(\tfrac{1}{m}\th,q-\z_m)\lambda^iS_i
\ee
The proof then follows from Equation~\eqref{eq:dmj}.
\end{proof}

From its very definition we see that the action of
$D_m(\tfrac{1}{m}\th,\lambda,q-\z_m)$ on the classical hypergeometric
functions satisfies the $q$-Picard-Fuchs equation~\eqref{eq:deq} for all solutions
to the classical Picard-Fuchs $P_m\in\W$,
where
\be
  P_m
  \=
  \th\prod_{j=1}^{n}(\frac{1}{m}\th+[\b_j]_m-1)-\lambda^m\prod_{j=1}^{n+1}(\frac{1}{m}\th+[\a_j]_m)
\ee
so that for all $\rho=1-\b_i^\max+\epsilon+\mathrm{O}(\epsilon^{d_i})$
\be
  P_m\sum_{k\in\BZ}
  \prod_{j=1}^{n+1}\frac{\Gamma([\a_j]_m+[\rho]_m+k)\Gamma([\b_j]_m)}
  {\Gamma([\b_j]_m+[\rho]_m+k)\Gamma([\a_j]_m)}\lambda^{mk+m[\rho]_m}\=0\,.
\ee
This implies the following:

\begin{lemma}
\label{lem:qdif}
We have
\be
\Big((1-\sigma)\prod_{j=1}^n(1-q^{\b_i-1}\sigma)
-\lambda\prod_{j=1}^{n+1}(1-q^{\a_i}\sigma)\Big)D_{m}(\tfrac{1}{m}\th,\lambda,q-\z_m)
\= Q_mP_m
\ee
for some operator $Q_m\in\W_{\BQ}[\z_m][\![q-\z_m]\!]$.
\end{lemma}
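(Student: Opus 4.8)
\emph{The plan} is to read the lemma as a right-division statement for $q$-difference--differential operators. The discussion preceding it has already done the analytic work: for every Frobenius exponent $\rho$ of $P_m$ at $\lambda=0$ (taken through order $\mathrm{O}(\epsilon^{d_i})$ in the confluent cases) the classical hypergeometric solution $f_\rho$ of $P_m$ written just above satisfies $P_m f_\rho=0$, while $D_m(\tfrac1m\th,\lambda,q-\z_m)f_\rho$ is annihilated by the $q$-Picard--Fuchs operator $L_q$ appearing on the left of~\eqref{eq:deq}. Hence the composite $A:=L_q\,D_m(\tfrac1m\th,\lambda,q-\z_m)$ kills the whole Frobenius basis $\{f_\rho\}$ of $P_m$, and the content of the lemma is to upgrade this to the operator identity $A=Q_mP_m$, i.e.\ to exhibit $P_m$ as a right factor of $A$.

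\emph{The main step} is right Euclidean division of $A$ by $P_m$. First I would fix the ambient ring: near $q=\z_m$ the shift $\sigma$ acts on $\lambda$-monomials whose exponents lie in a fixed coset of $\tfrac1m\BZ$ as a unit times a power series in $q-\z_m$ with coefficients in $\W_\BQ[\z_m]$, so that $A$ (and trivially $P_m$) lies in $\calD:=\W_\BQ[\z_m][\![q-\z_m]\!]$, regarded as a ring of differential operators in $\th$ over the power series ring in $q-\z_m$. The hypergeometric operator $P_m$ is regular singular at $\lambda=0$ --- its indicial polynomial $\th\prod_{j=1}^n(\tfrac1m\th+[\b_j]_m-1)$ has degree $n+1=\mathrm{ord}_\th(P_m)$ --- so Frobenius's method produces exactly $n+1$ solutions, linearly independent over the constants, namely the $f_\rho$ together with their $\epsilon$-derivatives. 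Performing the division $A=Q_mP_m+S_m$ with $\mathrm{ord}_\th(S_m)\le n$ (legitimate once the $\th$-leading coefficient $(1/m)^n(1-\lambda^m/m)$ of $P_m$ is inverted) and applying to each $f_\rho$ yields $S_m f_\rho=0$; but a nonzero operator of $\th$-order $\le n$ has a solution space of dimension $\le n$, whereas the $f_\rho$ span an $(n+1)$-dimensional space, so the usual Wronskian/uniqueness argument --- run coefficient-by-coefficient in the powers of $q-\z_m$ --- forces $S_m=0$. Therefore $A=Q_mP_m$, which is exactly the factorization that feeds the proof of Theorem~\ref{thm.1.2}.

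\emph{The hard part} is the coefficient-ring bookkeeping: the division just described inverts $1-\lambda^m/m$, whereas the lemma asks for $Q_m\in\W_\BQ[\z_m][\![q-\z_m]\!]$. I would instead reduce $A$ modulo $P_m$ by descending $\lambda^m$-degree, dividing against the $\lambda^m$-leading part $\prod_{j=1}^{n+1}(\tfrac1m\th+[\a_j]_m)$ of $P_m$, whose $\th$-leading coefficient is the \emph{constant} $(1/m)^{n+1}$; this introduces no denominator in $\lambda$ and produces $Q_m\in\W_\BQ[\z_m][\![q-\z_m]\!]$ with $\lambda$-degree bounded by that of $A$. (Equivalently, divide over $\BQ[\![\lambda]\!]$, where $1-\lambda^m/m$ is a unit, and then use the a priori bounds on the $\lambda$-degrees of $A$ and $P_m$ to see the quotient terminates polynomially.) The other point needing care is the precise description of $\calD$ near $q=\z_m$, in particular controlling the root-of-unity twist $\z_m^k$ hidden inside $\sigma$ when it is expanded against the fractional $\lambda$-exponents of the $f_\rho$; the division-plus-dimension-count argument itself is entirely standard once the Frobenius basis of $P_m$ from Section~\ref{sub.nfn} and the annihilation statement from the paragraph before the lemma are granted.
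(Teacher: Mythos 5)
Your proposal follows the same route as the paper: the paragraph preceding the lemma records that $D_m$ sends the full Frobenius basis of solutions of $P_m$ to solutions of the $q$-Picard--Fuchs equation, and the paper deduces the factorization exactly as you do, by right division by $P_m$ and the observation that a remainder of $\th$-order at most $n$ cannot annihilate an $(n+1)$-dimensional fundamental system. Your additional bookkeeping about where the quotient $Q_m$ lives (avoiding denominators in the $\th$-leading coefficient of $P_m$) is more care than the paper itself supplies, so there is nothing to correct.
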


\subsection{Convergence and gluing}
\label{sub.conv}

The operator $D_{m}(\th,\lambda,q-\z_m)$ can be written as
\be
\label{DDrem}
D_{m}(\th,\lambda,q-\z_m)
\= QP + D_{m}^{\rem}(\th,\lambda,q-\z_m)
\ee
where $D^\rem$ denotes the remainder of $D$ divided on the right by the Picard-Fuchs
operator $P$ and $n+1$ is the order of $P$. 
In this section, we will show that
the series in $q-\z_m$ is convergent once multiplied by $\log(q)^{n+1}$.
Recall that $N$ denotes the common denominator of $\a$, $\b$
and the function $C_\rho(\a;\b,q)$ from~\eqref{Cdef}. Using Equation
~\eqref{eq:qpinfm}, we can expand it into power series in $q-\z_m$ for all $m \geq 1$
with $(m,N)=1$:
\be
\label{Cseries}
C_\rho(\a;\b,q) \in
\prod_{j=1}^{n+1}\frac{\Gamma([\a_j+\rho]_m)\Gamma([\b_j]_m)}
  {\Gamma([\b_j+\rho]_m)\Gamma([\a_j]_m)} \BQ[\![q-\z_m]\!] \,.
\ee

\begin{lemma}
\label{lem:anal}
For $\rho\in\BZ_p$, the above expansions of
\be
\label{eq:hol.lem}
\log(q)^{n+1}
C_\rho(\a;\b,q)
\prod_{j=1}^{n+1}\frac{\Gamma([\b_j+\rho]_m)\Gamma([\a_j]_m)}{\Gamma([\a_j+\rho]_m)
  \Gamma([\b_j]_m)}
\ee
define holomorphic functions for all $|q-\z_m|_p<1$ with $(m,N)=(p,N)=1$.
\end{lemma}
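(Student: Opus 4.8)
The plan is to reduce the assertion to the case where $\rho$ and (after a harmless $p$-adic perturbation) the parameters $\a_j,\b_j$ are ordinary non-negative integers, where the function in question becomes an explicit rational function, and then to return to $\rho\in\BZ_p$ by a $p$-adic continuity argument. Suppose first that $\a_j,\b_j\in\BZ_{\geq1}$ and $\rho=r\in\BZ_{\geq0}$. Using $(q^{a};q)_\infty/(q^{a+r};q)_\infty=(q^{a};q)_r$ and $(q^{b+r};q)_\infty/(q^{b};q)_\infty=1/(q^{b};q)_r$, the function $C_\rho(\a;\b,q)$ of~\eqref{Cdef} specializes to
\be
C_r(\a;\b,q)\=\prod_{j=1}^{n+1}\frac{(q^{\a_j};q)_r}{(q^{\b_j};q)_r}\,,
\ee
and the accompanying product of $\Gamma$-values in~\eqref{eq:hol.lem} is a nonzero rational constant. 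At a root of unity $\z$ of order $\ell$ the order of vanishing of $(q^{c};q)_r=\prod_{i=0}^{r-1}(1-q^{c+i})$ equals the number of multiples of $\ell$ in a window of $r$ consecutive integers, hence lies in $\{\lfloor r/\ell\rfloor,\lceil r/\ell\rceil\}$; therefore each factor $(q^{\a_j};q)_r/(q^{\b_j};q)_r$ has at worst a simple pole at $\z$ and no singularity away from roots of unity, so $C_r(\a;\b,q)$ has poles of order at most $n+1$, all located at roots of unity. On the other hand the $p$-adic (Iwasawa) logarithm $\log q$ is holomorphic on $|q-\z_m|_p<1$ and vanishes to first order at every root of unity lying in that disk — these are precisely the $\z_m\z_{p^k}$, $k\geq0$ — since $\log$ kills roots of unity and $\log q=\log(q/(\z_m\z_{p^k}))$ has a simple zero at $q=\z_m\z_{p^k}$. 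Consequently $\log(q)^{n+1}$ cancels every pole of $C_r(\a;\b,q)$ inside the disk, so the lemma holds for integer parameters.

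To pass to $\rho\in\BZ_p$, fix the residue $i\in\{0,\dots,m-1\}$ with $\rho\equiv i\pmod m$ and, as in the proof of Lemma~\ref{lem:Dexists}, organize the series defining $C_\rho$ according to congruence classes modulo $m$; then within $\rho\in i+m\BZ_p$ each bracket $[\a_j+\rho]_m$, $[\b_j+\rho]_m$ is an affine function of $\rho$, so by Equation~\eqref{eq:qpinfm} every coefficient of the $(q-\z_m)$-expansion of the function in~\eqref{eq:hol.lem} is a $p$-adic analytic (indeed polynomial) function of $\rho$, and likewise of $\a_j,\b_j$ ranging in a fixed class modulo $m$. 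Choosing positive integers $A_j\equiv\a_j$, $B_j\equiv\b_j$, $r\equiv i\pmod m$ that are $p$-adically close to $\a_j,\b_j,\rho$, the integer case gives holomorphy on $|q-\z_m|_p<1$ for this dense family of parameters. Since $\BZ_p$ is compact the coefficient valuations are bounded uniformly, and the logarithmically growing denominators of the integer-parameter expansions — which are forced by the holomorphy of $\log(q)/(q-\z')$ on $|q-\z_m|_p<1$ for $\z'=\z_m\z_{p^k}$, the mechanism recalled after Definition~\ref{def.hab} — stay uniform along the family; hence the holomorphy persists for all $\rho\in i+m\BZ_p$, and therefore for all $\rho\in\BZ_p$. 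The $\epsilon$-expanded solutions attached to repeated values of $\b$ are then handled by differentiating this holomorphic family in $\rho$.

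The hard part is exactly the uniformity invoked above: one must ensure that as the integer parameters run over the dense family the $p$-adic radius of holomorphy does not shrink, i.e., produce a bound $v_p(c_k)\geq-c\log_p k$ on the coefficients of the $(q-\z_m)$-expansion with $c$ independent of the parameters. Here I would first telescope $(q^{\a_j};q)_r/(q^{\b_j};q)_r=(q^{\a_j};q)_{\b_j-\a_j}/(q^{\a_j+r};q)_{\b_j-\a_j}$, which exhibits each factor as a fixed polynomial divided by a product of $\b_j-\a_j$ cyclotomic-type factors, and then control, through the partial-fraction decomposition, the $(q-\z_m)$-expansions of $\log(q)/(q-\z_m\z_{p^k})$. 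An alternative that partly sidesteps this bookkeeping is to rewrite each factor of $C_\rho$ as the ratio $B_m(\a_j+\rho,\b_j-\a_j;q-\z_m)/B_m(\a_j,\b_j-\a_j;q-\z_m)$ of $q$-Beta functions (up to an explicit ratio of $\Gamma$-values) and to quote Theorem~\ref{thm.beta} for the numerator: in the integer case the denominator $B_m(\a_j,\b_j-\a_j;q-\z_m)$ is, up to a constant, the reciprocal of a $q$-binomial coefficient, so its reciprocal is a polynomial and the ratio is manifestly holomorphic, while the general case reduces via the density argument already present in the proof of Theorem~\ref{thm.beta} once one checks that $B_m(\a_j,\b_j-\a_j;q-\z_m)$ has no zeros in the disk.
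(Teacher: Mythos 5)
Your overall strategy is the same as the paper's: specialize to integer parameters so that the quantity in \eqref{eq:hol.lem} becomes $\log(q)^{n+1}$ times an explicit rational function whose poles lie at roots of unity and have order at most $n+1$ (your counting of multiples of $\ell$ in a window of consecutive integers is the same bound the paper extracts from Lemma~\ref{lem:qpzm}), observe that the simple zeros of the Iwasawa logarithm at the roots of unity $\z_m\z_{p^k}$ inside the disk cancel these poles, and then return to $\rho\in\BZ_p$ by $p$-adic continuity and density of the coefficients. Where you diverge is that you also perturb $\a_j,\b_j$ to integers $A_j,B_j$. The paper does not: it keeps $\a,\b$ at their rational values and specializes only $\rho\in\BZ_{\geq0}$, in which case $C_\rho(\a;\b,q)$ (times the $\Gamma$-ratio) is already the finite ratio $\prod_j(q^{\a_j};q)_\rho/(q^{\b_j};q)_\rho\in\BQ(q^{1/N})$ with the stated pole bound, and the only parameter that is then varied over $\BZ_p$ is $\rho$ itself, with the pole bound $n+1$ independent of $\rho$.

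This extra perturbation is exactly what creates the gap that you flag as ``the hard part'' and never close. Since $\b_j-\a_j\in(0,1)$ is not an integer, any integers $A_j,B_j$ that are $p$-adically close to $\a_j,\b_j$ must be enormous in the archimedean sense, and $B_j-A_j\to\infty$ along the approximating family; hence your telescoping $(q^{A_j};q)_r/(q^{B_j};q)_r=(q^{A_j};q)_{B_j-A_j}/(q^{A_j+r};q)_{B_j-A_j}$ does not produce rational functions of bounded complexity, and the asserted uniform bound $v_p(c_k)\geq-c\log_p k$ with $c$ independent of the parameters is precisely what remains unproved. The fallback via ratios of $q$-Beta functions is likewise incomplete: Theorem~\ref{thm.beta} controls $\log(q)B_m(\a,\b;q-\z_m)$ for fixed $\a,\b\in\frac{1}{N}\BZ$, but your argument additionally needs that $B_m(\a_j,\b_j-\a_j;q-\z_m)$ has no zeros in $|q-\z_m|_p<1$ (so that dividing by it is harmless), and this is left as something ``to check''. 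So, as written, the proposal settles the integer-parameter case correctly but does not complete the passage to general $\rho\in\BZ_p$; the repair is to drop the perturbation of $\a,\b$ altogether and argue as the paper does, using continuity of the coefficients in the single variable $\rho$ together with the density of $\BZ_{\geq0}$ in $\BZ_p$, the specialization to which already yields the rational-function/pole-cancellation picture with constants not depending on the approximating parameters.
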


\begin{proof}
For $\rho\in\BZ_{\geq0}$ we see that
\be
  \frac{(q^{\b_1+\rho};q)_\infty\cdots(q^{\b_{n+1}+\rho};q)_\infty}
  {(q^{\a_1+\rho};q)_\infty\cdots(q^{\a_{n+1}+\rho};q)_\infty}
  \frac{(q^{\a_1};q)_\infty\cdots(q^{\a_{n+1}};q)_\infty}{
    (q^{\b_1};q)_\infty\cdots(q^{\b_{n+1}};q)_\infty}
  \=
  \frac{(q^{\a_1};q)_\rho\cdots(q^{\a_{n+1}};q)_\rho}{
    (q^{\b_1};q)_\rho\cdots(q^{\b_{n+1}};q)_\rho}
    \in\BQ(q^{1/N})\,.
\ee
This function has potential poles at roots of unity and we see that the order
of the poles is bounded above by $n+1$ from Lemma~\ref{lem:qpzm}.
Therefore, we see that Equation~\eqref{eq:hol.lem} gives a holomorphic function
on the set $|q|_p=1$.
Notice that the coefficients, up to some global factor, are continuous functions
of $\rho$.
Therefore, we see that their growth is determined by the behaviour on $\BZ_{\geq0}$,
since this is dense in $\BZ_p$.
\end{proof}

\begin{theorem}
\label{thm:bounds.on.denom}
The series in $q-\z_m$
\be
  \log(q)^{n+1}D_{m}^{\rem}(\th,\lambda,q-\z_m)
\ee
are convergent for $|q-\z_m|_p<1$.
\end{theorem}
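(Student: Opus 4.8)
The goal is to bound the denominators (equivalently, establish $p$-adic convergence for $|q-\z_m|_p<1$) of the coefficients of $\log(q)^{n+1} D_m^{\rem}(\th,\lambda,q-\z_m)$, where $D_m^{\rem}$ is the right-remainder of $D_m$ by the Picard--Fuchs operator $P$ (of order $n+1$). The central idea is that although $D_m$ itself has \emph{factorially} growing denominators (coming from the hypergeometric normalization, via the $\Gamma$-quotients in Lemma~\ref{lem:Dexists}), these are exactly the denominators of the \emph{full fundamental basis of solutions} of $P$ at roots of unity --- and the remainder $D_m^{\rem}$ is pinned down, modulo the ideal generated by $P$, by how it acts on that basis. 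So the first step is to write $D_m^{\rem} = \sum_{i=0}^{n} c_i(\lambda, q-\z_m)\,\th^i$ with unknown coefficients $c_i$, and to solve for the $c_i$ by evaluating both sides of~\eqref{DDrem} on the $n+1$ explicit Frobenius solutions
\be
\psi_\rho(\lambda,q) \= {\;}_{n+1}\widetilde\phi_n(\a+\rho;\b+\rho;\lambda,q)\lambda^{\rho}, \qquad \rho = 1-\b_i^{\max}+\epsilon+\mathrm{O}(\epsilon^{d_i}),
\ee
using that $P\psi=0$ kills the $QP$ term. This yields a linear system: $D_m^{\rem}$ applied to the classical solution basis equals $D_m$ applied to it, i.e. (by Lemma~\ref{lem:Dexists}) equals $\psi_\rho(\lambda,q)$ itself, a known $q$-hypergeometric quantity.

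**Key steps in order.** First, solve the linear system for the $c_i$ by Cramer's rule: the Wronskian-type matrix of the classical solutions $\psi_\rho\big|_{q\to1}$ and their $\th$-derivatives is invertible (its determinant is, up to a unit and a power of $\lambda$, the Wronskian of $P$, which one computes explicitly from the leading and subleading terms of $P$ — it is a monomial in $\lambda$ and $1-\lambda$ up to constants, hence a $p$-adic unit on $|q-\z_m|_p<1$ away from the bad primes dividing $N$). Second, apply $C_\rho(\a;\b,q)^{-1}$ times $\prod_j \Gamma$-factors to each solution so as to land in the renormalized basis; by Lemma~\ref{lem:anal}, $\log(q)^{n+1}$ times $C_\rho$ times the inverse $\Gamma$-quotient is \emph{holomorphic} on $|q-\z_m|_p<1$ for $\rho\in\BZ_p$ --- this is precisely where the $\log(q)^{n+1}$ factor is consumed and where the factorial denominators cancel. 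Third, combine: each $c_i$ is (Cramer) a ratio whose numerator is a $\BQ_p$-linear combination of products of the renormalized holomorphic solutions $\widetilde\psi_\rho$ and their $\th$-derivatives (the operator $\th=\lambda\pt_\lambda$ preserves holomorphy and the $\log$-denominator structure), divided by the unit Wronskian. Multiplying through by $\log(q)^{n+1}$ and tracking that the needed $\rho$-values $1-\b_i^{\max}$ lie in $\tfrac1N\BZ\subset\BZ_p$ for $(p,N)=1$, one concludes $\log(q)^{n+1}c_i$ is holomorphic, hence $\log(q)^{n+1}D_m^{\rem}$ converges on $|q-\z_m|_p<1$.

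**The main obstacle.** The delicate point is the \emph{bookkeeping of $\log(q)$ powers}: each classical solution $\psi_\rho$, renormalized, needs one factor of $\log(q)$ worth of denominator-clearing (Lemma~\ref{lem:anal} gives $\log(q)^{n+1}$, but the solution itself has poles of order $\le n+1$ in $q-\z_m$ coming from all $n+1$ $q$-Pochhammer factors in the denominator), and in Cramer's rule one forms products/derivatives of up to $n+1$ such objects against an inverse Wronskian. One must check that the Wronskian denominator does not \emph{consume} $\log(q)$ powers (it should be a genuine unit, not merely holomorphic), and that taking $\th$-derivatives of the holomorphic renormalized solutions does not worsen the $\log$-growth beyond what the single overall $\log(q)^{n+1}$ can absorb --- here the logarithmic (not linear) rate of denominator growth in $q-\z_m$, matching $\log q/(q-\z_m)$ being holomorphic, is what saves the day, since $\th$ shifts a log-denominator of rate $r$ to rate $r$ (not $r+1$). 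I would handle this by proving a small lemma: if $g(\lambda,q-\z_m)$ is holomorphic on $|q-\z_m|_p<1$ with coefficients in $V_m\otimes\BQ[\z_m]$, then so is $\th g$ and so is any $\W_{\BQ}[\z_m]$-polynomial-in-$\th$ applied to $g$; then the whole argument is just linear algebra over the ring of such holomorphic germs, with the Wronskian invertible there. The only genuinely arithmetic input beyond this is the explicit shape of the Wronskian of $P$, which one extracts from~\eqref{eq:de2} by the standard reduction (Abel's identity for $q$-difference/differential operators): $\mathrm{Wr}(P) = \mathrm{const}\cdot \lambda^{-\sum(\b_j-1)}(1-\lambda)^{-1}$ up to normalization, manifestly a unit on $|q-\z_m|_p<1$ for $(p, N)=1$ and $\lambda$ in the étale base ring $B = \Spec(\BZ[\lambda,(\lambda(\lambda-1)N)^{-1}])$.
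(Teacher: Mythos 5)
Your proposal follows essentially the same route as the paper's proof: write $D_m^{\rem}=\sum_{i=0}^{n}c_i(\lambda,q-\z_m)\th^i$, evaluate on the fundamental (Frobenius) basis of classical solutions so that the $QP$ term dies and the result is the normalized $q$-hypergeometric solutions of~\eqref{eq:fnorm}, invoke Lemma~\ref{lem:anal} to get convergence of $\log(q)^{n+1}$ times those solutions, and invert the ($q$-independent, $\lambda$-only) Wronskian matrix to recover the coefficients. Your extra care about the Wronskian being a unit and about $\th$ preserving the logarithmic denominator growth are harmless refinements of the same argument, so the proposal is correct and matches the paper.
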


\begin{proof}
Since $D_m^{\rem}$ is the remainder by right division with an $(n+1)$-th order
operator $P$, we We can write
\be
\label{DDrem2}
D_{m}^{\rem}(\th,\lambda,q-\z_m) 
\= \sum_{i=0}^{n} D_{m}^{\rem,i}(\lambda,q-\z_m)\th^{i} \,.
\ee
Let $U(\lambda)$ be the fundamental basis of solutions to Equation~\eqref{eq:de}.
Then taking the corresponding solutions $\Phi$ to the $q$-difference equation
from Equation~\eqref{eq:fnorm}, we find that
\be
\log(q)^{n+1}(D_{m}^{\rem,0}(\lambda,q-\z_m),\dots,D_{m}^{\rem,n}(\lambda,q-\z_m))
U(\lambda)
  \=
  \log(q)^{n+1}\Phi(\lambda,q)\,.
\ee
By lemma~\ref{lem:anal}, the RHS is convergent for all $|q-\z_m|_p<1$ and therefore,
by simply multiplying both sides by $U(\lambda)^{-1}$ we obtain that
$\log(q)^{n+1}D_{m}^{\rem,i}(\lambda,q-\z_m)$ is convergent for $|q-\z_m|_p<1$.
\end{proof}

Therefore, we see that the operators $\log(q)^{n+1}D_{m}^{\rem,i}(\lambda,q-\z_m)$ can
be re-expanded at other roots of unity.
However, Lemma~\ref{lem:Dexists} implies that the arguments
of the classical function ${}_{n+1}F_n$ depend on the order of the root of unity
$m$ and are given by $([\a]_m,[\b]_m)$. However, varying $m$ generates a finite set 
$\{([\a]_m,[\b]_m)\}_{m\in\BZ_{>0}}$.

We can consider the $D$-module generated by ${}_{n+1}F_n$ with
$\a,\b\in\Big((0,1]\cap \frac{1}{N}\BZ\Big)^{n+1}$.
From Frobenius's algorithm, we see that this module contains the solution space.
The naive $q$-deformation is compatible wih this space and we obtain the solution
space to the $q$-Picard-Fuchs likewise.
With this full $D$-module and $\sigma$-module, we can therefore, find operators
$D_{m}(\th,\lambda,q-\z_m)$
that act on the $D$-module to produce the $q-\z_m$ expansions of the element of
the $\sigma$-module.
These operators can be re-expanded to series in $q-\z_{pm}$ once we $p$-adically
complete from Theorem~\ref{thm:bounds.on.denom}.

With Theorem~\ref{thm:bounds.on.denom} in hand we can finally prove Theorem
~\ref{thm.1}.

\begin{proof}[Proof of Theorem~\ref{thm.1}]
Consider the smooth projective variety $X/B$
defined by the affine equations
\be
\begin{aligned}
  w^N\=1-\lambda y_1^N\cdots y^N_n\,,\quad
  y_j^N+z_j^N\=1, \quad j=1,\dots,n 
\end{aligned}
\ee
and with $B=\Spec(\BZ[\lambda,(\lambda(\lambda-1)N)^{-1}])$.
It is easy to see that if $U$ is the affine variety defined by the above equations
then we have $Z=X\setminus U$ is a divisor with normal crossings.

It follows that with $x_j=y_j^N, \, 1-x_j=z_j^N$ for $j=1,\dots,n$, we have
\be
  \om(\a;\b;\lambda)
  \=
  (1-\lambda x_1 \dots x_n)^{-\a_{n+1}}
  \bigwedge_{j=1}^{n}x_j^{\a_j}(1-x_j)^{\b_j-1-\a_j}\frac{dx_j}{x_j}
  \in H^n_\dR(U/B) \,.
\ee
It is easy to see that the residue map $H_\dR^n(U/B) \to H_\dR^{n-1}(Z/B)$
vanishes, hence by the long exact sequence for de Rham cohomology for the pair
$(X,U)$, the above classes lift to well-defined elements in $H^n_\dR(X/B)$. 

The D-modules generated by
$\{\om([\a]_m;[\b]_m;\lambda)\}_{m\in\BZ_{>0}}$ are submodules
of $H^n_\dR(X/B)$. The operators $D_{m}(\th,\lambda,q-\z_m)$ act on these submodules.
For each $m$, we have (as defined in Equation~\eqref{omdef})
\be
  \om_{m,q-\z_m}
  \=
  \prod_{j=1}^{n}
  B_{m}(\a_j,\b_j-\a_j;q-\z_m)
  D_{m}(\th,\lambda^{\frac{1}{m}},q-\z_{m})
  \om([\a]_m,[\b]_m,\lambda^{\frac{1}{m}})\,.
\ee
We will now use the specialization to $\lambda=0$ to complete the argument.  
Since $X/B$ is proper smooth, it follows by the finiteness of coherent cohomology
(as was kindly communicated to us by P. Scholze and F. Wagner)
that $H_{\dR}^{n}(X/B)$ is a
finitely-generated $\BZ[\lambda,(\lambda(\lambda-1)N)^{-1}]$-module. Modulo torsion,
(which amounts to enlarging $\BZ[\lambda],(\lambda(\lambda-1)N)^{-1}]$ to
$\BZ[\lambda,\Delta(\lambda)^{-1}]$ for some suitably chosen $\Delta(\lambda)
\in \BZ[\lambda]$, a Noetherian ring) $H_{\dR}^{n}(X/B)$ is finite, projective
hence a submodule of a free $\BZ[\lambda,\Delta(\lambda)^{-1}]$-module.
It follows that modulo torsion
$H_{\dR}^{n}(X/B)$ injects into $H_{\dR}^{n}(X/B)\otimes\BQ(\!(\lambda)\!)$.
Under base-change $\widehat{X}=X\times_B\widehat{B}$ of $X$ to
$\widehat{B}=\mathrm{Spec}(\BQ(\!(\lambda)\!))$, 
we obtain the maps
\be
\label{eq:hom.maps}
  H_{\dR}^{n}(X/B)\otimes\BQ
  \hookrightarrow
  H_{\dR}^{n}(X/B)\otimes\BQ(\!(\lambda)\!)
  \cong
  H_{\dR}^{n}(\widehat{X}/\widehat{B})
  \cong H_{\dR}^{n}(X_N^n\times\widehat{B}/\widehat{B})\,,
\ee
where $X_N$ is the Fermat curve from Section~\ref{sec:fermat}.
The last isomorphism follows from the identity
\be
  (1-\lambda x_1\cdots x_n)^{\frac{1}{N}}
  \=
  \sum_{k=0}^{\infty}(-1)^k\binom{\frac{1}{N}}{k}x_1^k\cdots x_n^k\lambda^k\,.
\ee
The Frobenius morphisms are compatible with the maps~\eqref{eq:hom.maps}.

Under these maps, $\om(\a;\b;\lambda)\in H_{\dR}^{n}(X/B)$ is sent to the class
\be
  {}_{n+1}F_{n}(\alpha;\beta;\lambda)
  \bigwedge_{j=1}^{n}[\om_{\a_j,\b_j-\a_j}]
  \in H_{\dR}^{n}(X_N^n\times\widehat{B}/\widehat{B})\,,
\ee
where $\om_{\a,\b}$ are the classes defined in Equation~\eqref{eq:omab}.
Therefore, we see that in this $\lambda$-completed ring we have
\be
\begin{aligned}
  \om_{m,q-\z_m}
  &\=
  {}_{n+1}\phi_{n}(\alpha;\beta;\lambda^{\frac{1}{m}},\z_m+q-\z_m)
  \bigwedge_{j=1}^{n}\om_{[\a_j]_m,[\b_j-\a_j]_m}\\
  &\;\in\;
  H_{\dR}^{n}(X_N^n\times\widehat{B}/\widehat{B})[\lambda^{\frac{1}{m}},\z_m][\![q-\z_m]\!]\,.
\end{aligned}
\ee
The gluing then follows from the $B$-normalisation, which accounts for the action
of Frobenius on the forms $\om_{a,b}$ (as considered in Section~\ref{sec:fermat}),
and the fact that ${}_{n+1}\phi_{n}$ is given by the same functions at each root
of unity once we account for the action of Frobenius, which takes
$\lambda^{\frac{1}{pm}}\mapsto\lambda^{\frac{1}{m}}$. Since the first map
in Equation~\eqref{eq:hom.maps} is an injection, we see that an equality in
$H_{\dR}^{n}(X/B)\otimes\BQ_p(\!(\lambda)\!)$ proves the equality in
$H_{\dR}^{n}(X/B)\otimes\BQ_p$, and this concludes the proof of the theorem.
\end{proof}

In the remaining of this section, we discuss three examples of explicit elements
of the Habiro cohomology illustrating Theorem~\ref{thm.1}. 

\subsection{The Legendre family of elliptic curves}
\label{sub.leg1a}

Our first example is the Legendre family of elliptic curves $E/B$
defined by 
\be
  y^2\=x(x-1)(x-16\lambda)
\ee
where $\lambda \neq 0, 1/16$. We can think of the above equation as defining
a family of affine elliptic curves $E_\lambda$ parametrized by
$\lambda \in B=\Spec(\BZ[\lambda,\tfrac{1}{\lambda(\lambda-1)}])$.
The holomorphic one-form is given by $\om=dx/y$.
It satisfies the differential equation
\be
P(\lambda,\th)\om \=0 
\ee  
where $\th=\lambda \pt_\lambda$ as before and 
\be
  P(\lambda,\th)
  \=
  (16\lambda - 1)\th^2 + 16\lambda\th + 4\lambda
  \=
  4\lambda(2\th+1)^2-\th^2\,.
\ee
Indeed,
\be
  P(\lambda,\th)\om
  \=
  d(-32x^2(x-1)^2\lambda y^{-3})
  \=0\in H^{1}(E/B)\,.
\ee
We can solve the PF equation $P(\lambda,\th)f(\lambda)$ for a power series in
$\lambda$ and find the unique solution 
\be
\label{PFll}
  P(\lambda,\th)f(\lambda)\=0, \qquad  f(\lambda)
  \=
  \sum_{k=0}^{\infty}
  \frac{(2k)!^2}{k!^4}\lambda^k\in\BZ[\![\lambda]\!], \quad f(0)\=1 \,. 
\ee
The naive $q$-deformation\footnote{This is different from the
  one considered in Shirai~\cite{Shirai}. See the following section Shirai's
  $q$-deformation.}
\be
  f(\lambda,q)
  \=
  \sum_{k=0}^{\infty}
  \frac{(q;q)_{2k}^2}{(q;q)_{k}^4}\lambda^k
\ee
satisfies the $q$-difference equation
\be
\label{eq:qPF.v1}
\begin{aligned}
  &(1-\lambda)f(\lambda,q)-2(1+q\lambda)f(q\lambda,q)
  +(1+(2q-q^2)\lambda)f(q^2\lambda,q) + 4q^2\lambda f(q^3\lambda,q)\\
  &\qquad- (-2q^3 + q^2)\lambda f(q^4\lambda,q) - 2q^3\lambda f(q^5\lambda,q)
  - q^4\lambda f(q^6\lambda,q)
  \=0\,.
\end{aligned}
\ee
This equation is a $q$-deformation of the Picard-Fuchs equation. Indeed,
substituting $q=e^\hbar$ and expanding into power series in $\hbar$, one sees that
the coefficient of $\hbar$ is zero, and that of $\hbar^2$ is the Picard-Fuchs
equation~\eqref{PFll} for $f(\lambda)$. 

The series $f(\lambda,q)$ can be expanded around each complex root of unity.  
For example, the expansion at $q=1$ is given by
\be
\label{DDleg1}
f(\lambda,q) \=
\big(1 + \th^2(q-1)
+\tfrac{1}{12}(6\th^4+2\th^3-5\th^2)(q-1)^2+\cdots\big)f(\lambda) \\
\ee
from which we obtain
\be
\label{DDleg2}
\begin{aligned}
D_1(\lambda,\th,q-1) & \=   1 + \th^2 (q-1)
+\tfrac{1}{12}(6\th^4+2\th^3-5\th^2)(q-1)^2+\cdots
\\
D^\rem_1(\lambda,\th,q-1) & \=
1 - \tfrac{16 \lambda \th + 4\lambda}{\Delta(\lambda)} (q-1) +
\tfrac{2 (-13 \lambda - 1056 \lambda^2 + 1792 \lambda^3) \th 
-3 \lambda - 568 \lambda^2 + 640 \lambda^3}{3\Delta(\lambda)^3} (q-1)^2 + \cdots
\end{aligned}
\ee
with $\Delta(\lambda)=16\lambda-1$. 
Likewise, the expansion of $f(\lambda,q)$ at $q=\z_3$ given by
\be
\Big((1+\z_3\lambda^{\frac{1}{3}})+\big(2\lambda^{\frac{1}{3}}
+8\lambda^{\frac{1}{3}}\th+9(-1+\lambda^{\frac{1}{3}})\th^2
+(2\lambda^{\frac{1}{3}}+4\lambda^{\frac{1}{3}}\th-9\th^2)\z_3\big)(q-\z_3)
+\cdots\Big)f(\lambda)\,.
\ee
More generally, we have expansions
$f(\lambda^{1/m},q) \= D_{m}(\lambda^{1/m},\th,q-\z_m) f(\lambda)$. 
The corresponding Habiro cohomology class $\om_q\in\calH^{1}_\nv(E/B)$
satisfies the $q$-difference equation~\eqref{eq:qPF.v1}.
Notice that this class is different from that of Theorem~\ref{thm.1}
in two ways; first the use of the use of $(q;q)_{2k}/(q;q)_k$ as opposed to
$(q^{1/2};q)_k$, and second the absence of the $B$-normalisation of
Equation~\eqref{omdef}, which in this case does not drastically change
the resulting Habiro cohomology class (see Remark~\ref{rem.factorials}).
The constant term of $\om_q$ at roots of unity is given by
\be
\label{ctleg1}
\om_{m,0} \=
\Big( \sum_{k=0}^{m-1}
\frac{(\z_m;\z_m)_{2k}^2}{(\z_m;\z_m)_{k}^4}\lambda^{\frac{k}{m}} \Big) \om
\,.  
\ee

\subsection{Another version of the Legendre family}
\label{sub.leg2a}

Our next example is a second version of the Legendre family, which like
the one before leads to elements in Habiro cohomology, but unlike before,
the $q$-Picard-Fuchs equation is second order, and not 6th order. Thus,
a fixed Habiro cohomology may have several $q$-holonomic submodules. 

Consider the same family
\be
  y^2\=x(x-1)(x-\lambda)
\ee
where $\lambda \neq 0, 1$.   
The holomorphic one-form is given by $\om=dx/y$.
This form satisfies the following differential
equation
\be
P(\lambda,\th)\om \=0 
\ee  
where $\th=\lambda \pt_\lambda$ as before and 
\be
  P(\lambda,\th)
  \=
  4(\lambda - 1)\th^2 + 4\lambda\th + \lambda
  \=
  \lambda(2\th+1)^2-4\th^2\,.
\ee
Indeed,
\be
  P(\lambda,\th)\om
  \=
  d(-2x^2(x-1)^2\lambda y^{-3})
  \=0\in H^{1}(E/B)\,.
\ee
We can solve the PF equation $P(\lambda,\th)f(\lambda)$ for a power series in
$\lambda$ and find the unique solution 
\be
\label{PFlla}
  P(\lambda,\th)f(\lambda)\=0, \qquad  f(\lambda)
  \=
  \sum_{k=0}^{\infty}
  \frac{(1/2)_k^2}{k!^2}\lambda^k\in\BZ[\![\lambda]\!], \quad f(0)\=1 \,. 
\ee
The $q$-deformation:
\be
\label{leg12}
  f(\lambda,q)
  \=
  \sum_{k=0}^{\infty}
  \frac{(q^{1/2};q)_{k}^2}{(q;q)_{k}^2}\lambda^k\,.
\ee
is the $q$-series considered by Shirai~\cite{Shirai}.
It satisfies the $q$-PF equation
\be
(1-\lambda)f(\lambda,q)-2(1-q^{1/2}\lambda)f(q\lambda,q)
+(1-q\lambda)f(q^2\lambda,q)\=0\,.
\ee
We can use Equation~\eqref{eq:qp.q1} to compute the operator
\be
D_1(\lambda,\th,q-1) \=
1 - \tfrac{1}{2}\th(q-1) + (\tfrac{1}{12}\th^2 + \tfrac{11}{48}\th)(q-1)^2
+ (-\tfrac{7}{96}\th^2 - \tfrac{7}{48}\th)(q-1)^3+\cdots
\ee
that satisfies
\be
f(\lambda,q)\=D_1(\lambda,\th,q-1)f(\lambda)\,.
\ee
Theorem~\ref{thm.1}, adapted to the series~\eqref{leg12} implies
that $\om_q \in \calH^1_\nv(E/B)$ (see Remark~\ref{rem.factorials} for the
slight change in normalisation), and that the
constant term of $\om_q$ when $q=\z_m$ is given by
\be
\label{ctleg2}
\om_{m,0} \=
\Big( \sum_{k=0}^{m-1}
\frac{(\z_m^{1/2};\z_m)_{k}^2}{(\z_m;\z_m)_{k}^2}\lambda^{\frac{k}{m}} \Big) \om
\,.  
\ee
Here we remark that the appearance of $q$ to a fractional power may cause
difficulties in general.

Thus, the two $q$-deformations of the Legendre family in this and the previous
subsections lead to different classes in Habiro cohomology, whose constant terms
given by~\eqref{ctleg1} and~\eqref{ctleg2}, respectively, agree when $m=1$. On
the other hand, these classes generate $q$-holonomic submodules in Habiro cohomology
of ranks 6 and 2, respectively.

\subsection{The quintic 3-fold}
\label{sub.quintica}

The last example illustrating Theorem~\ref{thm.1} will be the Dwork family of
the quintic 3-fold, from which mirror symmetry arose and forever connected
it with Calabi--Yau manifolds~\cite{Candelas:pair}.
A basic reference is the book of Cox-Katz~\cite{Cox-Katz}.
The Picard-Fuchs equation is


\be
\label{PFquintic}
(\th^4 -5 \lambda (5\th + 1)(5\th + 2)(5\th + 3)(5\th + 4))f(\lambda) \=0
\ee
with unique solution 
\be
f(\lambda) \= \sum_{k=0}^\infty \frac{(5k)!}{k!^5} \lambda^k \in \BZ[\![\lambda]\!],
\qquad f(0)=1 \,.
\ee
Its $q$-deformation
\be
f(\lambda,q) \= \sum_{k=0}^\infty \frac{(q;q)_{5k}}{(q;q)_k^5} \lambda^k
\in \BZ[q][\![\lambda]\!]
\ee
satisfies the linear 24th order $q$-difference equation
\begin{tiny}
\be
\label{qPFquintic}
\begin{aligned}
  & (1 - \lambda) f(\lambda)
  + (-4 - q \lambda) f(q \lambda)
  + (6 - q^2 \lambda) f(
  q^2 \lambda) + (-4 - q^3 \lambda) f(q^3 \lambda)
  + (1 - q^4 \lambda) f(q^4 \lambda)
  + q (1 + q) (1 + q^2) \lambda f(q^5 \lambda)
\\ &   
  + q^2 (1 + q) (1 + q^2) \lambda f(q^6 \lambda)
  + q^3 (1 + q) (1 + q^2) \lambda f(q^7 \lambda)
  + q^4 (1 + q) (1 + q^2) \lambda f(q^8 \lambda)
  + q^5 (1 + q) (1 + q^2) \lambda f(q^9 \lambda)
\\ &  
  - q^3 (1 + q^2) (1 + q + q^2) \lambda f(q^{10} \lambda)
  - q^4 (1 + q^2) (1 + q + q^2) \lambda f(q^{11} \lambda)
  - q^5 (1 + q^2) (1 + q + q^2) \lambda f(q^{12} \lambda)
\\ &  
  - q^6 (1 + q^2) (1 + q + q^2) \lambda f(q^{13} \lambda)
  - q^7 (1 + q^2) (1 + q + q^2) \lambda f(q^{14} \lambda)
  + q^6 (1 + q) (1 + q^2) \lambda f(q^{15} \lambda)
\\ &  
  + q^7 (1 + q) (1 + q^2) \lambda f(q^{16} \lambda)
  + q^8 (1 + q) (1 + q^2) \lambda f(q^{17} \lambda)
  + q^9 (1 + q) (1 + q^2) \lambda f(q^{18} \lambda)
  + q^{10} (1 + q) (1 + q^2) \lambda f(q^{19} \lambda)
\\ &  
  - q^{10} \lambda f(q^{20} \lambda)
  - q^{11} \lambda f(q^{21} \lambda)
  - q^{12} \lambda f(q^{22} \lambda)
  - q^{13} \lambda f(q^{23} \lambda)
  - q^{14} \lambda f(q^{24} \lambda) \= 0 \,.
\end{aligned}
\ee
\end{tiny}

We find that
\be
\label{DQ1}
\begin{aligned}
D_{1}(\lambda,\th,q-1) \= & 
1 + 5\th^2 (q-1) + (\tfrac{25}{2}\th^4 + \tfrac{5}{3}\th^3
- \tfrac{25}{12}\th^2)(q-1)^2 \\ & + (\tfrac{125}{6}\th^6 + \tfrac{25}{3}\th^5
- \tfrac{125}{12}\th^4 - \tfrac{5}{3}\th^3 + \tfrac{5}{4}\th^2)(q-1)^3+\cdots
\end{aligned}
\ee
and
\be
\label{DremQ1}
\begin{aligned}
D^\rem_{1}(\lambda,\th,x) \= &
1 + 5\th^2 (q-1) -\frac{1}{12\Delta(\lambda)}
5 (3600 \lambda + 37500 \lambda \th - 5 \th^2 + 
    146875 \lambda \th^2 + 4 \th^3 \\ & + 
    175000 \lambda \th^3)(q-1)^2
    -\frac{1}{12\Delta(\lambda)^3}
  5 (5400 \lambda + 183750000 \lambda^2 \\ & + 76171875000 \lambda^3 + 
    70650 \lambda \th + 1899062500 \lambda^2 \th + 
    699707031250 \lambda^3 \th \\ & + 3 \th^2 + 
    324750 \lambda \th^2 + 6593359375 \lambda^2 \th^2 + 
    1767578125000 \lambda^3 \th^2 - 4 \th^3 \\ & + 
    906250 \lambda \th^3 + 8515625000 \lambda^2 \th^3 + 
    1281738281250 \lambda^3 \th^3)(q-1)^3+\cdots  
\end{aligned}
\ee
with $\Delta(\lambda)=5^5\lambda-1$.


\section{From the Habiro ring of an \'etale map to Habiro cohomology}
\label{sec.method2}

Elements of the Habiro ring of \'etale algebras were constructed in~\cite{GSWZ}
in a variety of methods all stemming from $q$-difference equations of
$q$-hypergeometric sums. One method~\cite[Sec. 4.1]{GSWZ} used a so called
``residue formula'' to construct elements.
We can now rephrase this residue method in terms of Habiro cohomology. We illustrate
the main idea with an example
\be
\label{sumtoy}
f(w,t,q) \=
\sum_{k=0}^{\infty}(-1)^{Ak}\frac{q^{Ak(k+1)/2}w^{Ak}}{(w;q)_{k+1}}t^k
\ee
which is then expanded in around $q=\z_m$. The coefficients of these power series
can be interpreted as rational functions in a similar
sense to the identification of power series in terms of periods in Section
~\ref{sec.method1}. For example, the value at $q=1$ is given by
\be
\sum_{k=0}^{\infty}(-1)^{Ak}\frac{w^{Ak}}{(1-w)^k}t^k \=
\frac{1-w}{1-w-(-1)^Aw^At}\,.
\ee
We can therefore associate to the sum ~\eqref{sumtoy} an element of the Habiro ring
\be
  f(w,t,q)
  \;\in\;
  \calH_{\BZ[t][w^{\pm 1},(1-w-(-1)^Aw^At)^{-1}]/\BZ[t]}\,.
\ee
Taking the residue
\be
\Res f(w,t,q)\frac{dw}{w}\,.
\ee
we then obtain an element of the Habiro ring of the \'etale map
\be
\BZ[t]\to\BZ[t][w^{\pm 1},\delta^{-1}]/(1-w-(-1)^Aw^At)
\ee
We can interpret this element as a class
\be
f(w,t,q)\frac{dw}{w} \in\calH_\nv^{1}(X/B)
\ee
where $X_t$ is determined by the equation $1-w-(-1)^Aw^At$.
In this section, we will generalise this construction by replacing the residue,
which produced algebraic functions, by cohomology classes, which in general will
produce periods.

\subsection{Proof of Theorem~\ref{thm.ct2}}
\label{sub.thm.ct2}

For simplicity, we will assume that $Q=0$ for the proof; however, the more general result follows essentially the same argument.
Fix the \'etale $\BZ[t]$-algebra $R:=\BZ[t][f_\calA(t)]$ with $f_\calA(t)$
as in~\eqref{fcalA}  and recall the proper $q$-hypergeometric series $f_\calA(t,q)$
from Equation~\eqref{fcalAq}. Being proper $q$-hypergeometric, it follows that
it satisfies the system of linear $q$-difference equations
\be
\label{fhsys}
f-\s_\a f = t_\a f - q t_\a \prod_{\a \in \calA} \s_\a f, \qquad \a \in \calA
\ee
with respect to the variables $t_\a$ where $\s_\a(t_\b) = q^{\delta_{\a,\b}} t_\b$.

To prove Theorem~\ref{thm.fA1}, we need to show that: 
\begin{itemize}
\item
  $f_\calA(t,q)$ can be expanded as series in $q-\z_m$ with coefficients in
\newline  $R_m=R[t^{1/m},\z_m]$, 
\item
  these expansions $p$-adically glue after completion and Frobenius twist.
\end{itemize}

To compute the expansion of $f_\calA(t,q)$ near $q=1$, observe first
that by definition and by Equation~\eqref{fcalA}, we have
\be
\label{deltaa}
f_\calA(t,q) \= f_\calA(t) + O(q-1) \,.
\ee

Equation~\eqref{eq:qp.q1} with $q=e^\hbar$ implies that 
\be
\label{qq1}
\begin{aligned}
(q;q)_k \= & (-1)^kk!\hbar^k
  \exp\Big(\sum_{\ell=2}^{\infty}(B_{\ell}(k+1)-B_{\ell}(1))
  \frac{B_{\ell-1}(1)}{\ell}\frac{\hbar^{\ell-1}}{\ell!}\Big) \\
   & \in (-1)^kk!\hbar^k
  \BQ[k][\![q-1]\!]\,.
\end{aligned}
\ee

Equations~\eqref{fcalAq} and \eqref{deltaa} imply that
\be
\label{S11}
f_\calA(t,q) \in S \otimes \BQ[\![q-1]\!]
\ee
where
\be
\label{Sd1}
S\= \BZ[t]\langle f_\calA(t) \rangle
\;:=\; \BZ[t][(\pt_x)^\ell(f_\calA(t)) \, | \ell \geq 0] 
\ee
Since $f_\calA(t)=(1-\sum_{\a \in \calA} t_\a)^{-1}$ is a rational function,
it follows that
\be
\label{SsubR1}
S = R \,.
\ee

On the other hand, Equation~\eqref{SfA1} implies that
\be
\label{AAA1}
f_\calA(t,q) \in \BZ[q^{\pm1}][\![t]\!] \subset
\BZ[\![t]\!][\![q-1]\!]
\ee
Moreover, we claim that
\be
\label{RQ1}
R \otimes \BQ \cap \BZ[\![t]\!] \= R \,.
\ee
Indeed, one inclusion is obvious since
$f_\calA(t) \in \BZ[\![t]\!]$ and the other inclusion is
easy to show (compare with ~\cite[Eqn.(169)]{GSWZ}).

Equations~\eqref{S11}, ~\eqref{SsubR1}, ~\eqref{AAA1} and~\eqref{RQ1} conclude that
$f_\calA(t,q) \in R[\![q-1]\!]$. 

To compute the expansion $f_\calA(t,q)$ near $q=\z_m$,
we use $m$-congruence sums (i.e., taking the summation variable $k\in\BZ_{\geq0}^\calA$
in Equation~\eqref{fcalAq} in a fixed congruence class modulo $m$), we obtain
the following formula for the constant term of the expansion of $f_\calA(t,q)$ at
$q=\z_m$: 
\be
f_\calA(t^{1/m},q) \=
  \frac{1}{1-\sum_\a t_\a}
  \sum_{\ell\in(\BZ_{\geq0}/m\BZ_{\geq0})^\calA}
  \frac{(\z_m;\z_m)_{\sum_\a \ell_a}}{\prod_\a (\z_m;\z_m)_{\ell_\a}}
  \prod_\a t_\a^{\ell_\a/m} + O(q-\z_m) \,.
\ee
Note that the above sum is in $\BZ[t^{1/m},\z_m]$.

We now use the expansion of the $q$-factorial at roots of unity from
Lemma~\ref{lem:qpzm} with $q=\z_me^{\hbar}$ and the equation~\eqref{qpk}
to deduce that
\be
\label{qqm}
\begin{aligned}
  (q;q)_{km+n}
  &\=
  (\z_m;\z_m)_{n}
  (-1)^kk!m^{2k}\hbar^k
  \exp\Big(\sum_{\ell=2}^{\infty}(B_{\ell}(k+1)-B_{\ell}(1))
  \tfrac{B_{\ell-1}(1)(m\hbar)^{\ell-1}}{(-1)^{\ell-1}(\ell-1)\ell!}\\
  &\qquad\qquad\qquad+\sum_{j=0}^{m-1}
  \sum_{\ell=2}^{\infty}
  (B_{\ell}(\tfrac{j+1}{m})-B_{\ell}(\tfrac{j+1}{m}+k+\lceil\tfrac{n-j}{m}\rceil))
  \Li_{2-\ell}(\z_m^j)\tfrac{(m\hbar)^{\ell-1}}{\ell!}\Big)\\
  &\;\in\;
  (-1)^kk!m^{2k}(q-\z_m)^k
  \BQ[\z_m,k][\![q-\z_m]\!]\,.
\end{aligned}
\ee
This and the arguments of the above proof implies that
$f_\calA(t^{1/m},q) \in R_m[\![q-\z_m]\!]$ for all $m \geq 1$.
This proves that $f_\calA(t,q)$ can be expanded for $q$ near a complex root
of unity.

To show that these expansions glue, we use $q$-holonomic methods, namely that
the system of $q$-difference equations~\eqref{fhsys} has a unique solution
as power series in $\BZ[\![t]\!]$ equal to $1+\mathrm{O}(t)$.
By definition $f_\calA(t,q)$, and a fortiori, its expansion around $q=\z_m$
and its re-expansion around $q=\z_{pm}$, and its expansion around $q=\z_{pm}$
satisfy this system of linear $q$-difference equations. 
This implies that the $p$-adic re-expansions also satisfies that same equation with
initial condition $1+\mathrm{O}(t)$ and therefore must be equal after undoing the
$t^{1/m}$ via the Frobenius.

\subsection{Proof of Theorem~\ref{thm.fA1}}
\label{sub.nahm}

We first prove Proposition~\ref{prop.A1}, which 
follows from the well-known $q$-binomial theorem
\be
\label{qbinom}
(x;q)_n \= \sum_{k=0}^n (-1)^k q^{k(k-1)/2}
\left[\!\!\begin{array}{cc} n \\ k \end{array}\!\!\right]_q x^k \,.
\ee
Indeed, we have:
\be
\label{pfA}
\begin{aligned}
\Sf_A(t,q)
  &\=
  \sum_{k,\ell\in\BZ}(-1)^{\diag(A)(k+\ell)}
  \frac{q^{kAk/2-\ell A\ell/2+\diag(A)k/2-\diag(A)\ell/2}}{
    (q;q)_{k}(q^{-1};q^{-1})_{\ell}}t^{k+\ell}\\
  &\=
  \sum_{k,\ell\in\BZ}(-1)^{\diag(A)k+\diag(A-I)\ell}
  \frac{q^{kAk/2-\ell(A-I)\ell/2+\diag(A)k/2-\diag(A-I)\ell/2}}{
    (q;q)_{k}(q;q)_{\ell}}t^{k+\ell}\\
  &\=
  \sum_{m\in\BZ_{\geq0}^N}\sum_{k=0}^{m}
  (-1)^{\diag(I)k+\diag(A-I)m}
  \frac{q^{kIk/2+\diag(1)k/2-m(A-I)m/2+m(A-I)k+\diag(A)k-\diag(A-I)m/2}}{
    (q;q)_{k}(q;q)_{m-k}}t^{m}\\
  &\=
  \sum_{m\in\BZ_{\geq0}^N}
  (-1)^{\diag(A-I)m}q^{-m(A-I)m/2-\diag(A-I)m/2}
\prod_{j=1}^N
\left[\!\!\begin{array}{cc} (Am)_j+A_{jj}-1  \\ m_j \end{array}\!\!\right]_q
t^{m}\,.
\end{aligned}
\ee
\qed

Proposition~\ref{prop.A1} implies the following. Let $\s_j$ denote the operator
that shifts $t_j$ to $qt_j$ and $t_i$ (for $i \neq j$) to $t_i$, for $j=1,\dots,N$. 

\begin{corollary}
\label{cor.SfA}
$\Sf_A(t,q)$ is $q$-holonomic and satisfies the system of linear
$q$-difference equations
\be
\label{eq:qdiff.sym.nahm}
\begin{aligned}
  &((1-\sigma_i)\prod_{j=1}^{N}(q^{A_{jj}-(A-I)_{ij}}
  \sigma^{(A-I)m};q)_{A_{ij}-1}\Sf_A)(t,q)\\
  &\=
  (-1)^{(A-I)_{ii}}q^{-(A-I)_{ii}}t_i\sigma^{(A-I)_{i\cdot}}
  \prod_{j=1}^{N}(q^{A_{jj}}\sigma^{Am};q)_{A_{ij}}\Sf_A)(t,q), \qquad i=1,\dots,N\,.
\end{aligned}
\ee
\end{corollary}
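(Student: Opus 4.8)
The plan is to derive both assertions of Corollary~\ref{cor.SfA} from the closed form \eqref{SfA1} of Proposition~\ref{prop.A1}. For the $q$-holonomicity, the quickest argument is that $\Sf_A(t,q)=f_A(t,q)\,f_A(t,q^{-1})$ is a product of $q$-holonomic series: each Nahm sum $f_A$ has a proper $q$-hypergeometric summand (a $q$-Gaussian times reciprocal $q$-factorials), hence is $q$-holonomic in $(t,q)$, and $q$-holonomicity is preserved under products. Equivalently — and this is presumably what ``Proposition~\ref{prop.A1} implies the following'' refers to — the summand in \eqref{SfA1} itself, the sign $(-1)^{\diag(A-I)k}$ times the $q$-Gaussian $q^{-k(A-I)k/2-\diag(A-I)k/2}$ times $\prod_j\binom{(Ak)_j+A_{jj}-1}{k_j}_q$, is proper $q$-hypergeometric, so $\Sf_A$ is $q$-holonomic directly. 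Either way the first assertion follows, and it also guarantees a priori that in each coordinate direction $\Sf_A$ satisfies a first-order inhomogeneous contiguous relation of the expected shape; the remaining task is to write that relation down explicitly.

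First I would set $\Sf_A(t,q)=\sum_{m\in\BZ_{\geq0}^N}a(m)\,t^m$ with $a(m)$ the summand of \eqref{SfA1}, and compute the term ratio $a(m+e_i)/a(m)$ for each $i=1,\dots,N$. The sign contributes $(-1)^{(A-I)_{ii}}$; the $q$-Gaussian contributes $q^{-((A-I)m)_i-(A-I)_{ii}}$ (using symmetry of $A-I$); and, via $(q;q)_{n+r}=(q;q)_n(q^{n+1};q)_r$, the factor $\binom{(Am)_j+A_{jj}-1}{m_j}_q$ contributes $(q^{(Am)_j+A_{jj}};q)_{A_{ij}}$ from its numerator, $(q^{((A-I)m)_j+A_{jj}};q)_{A_{ij}-\delta_{ij}}^{-1}$ from its ``upper'' denominator, and — only when $j=i$ — an extra $(1-q^{m_i+1})^{-1}$ from its ``lower'' denominator. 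Collecting everything,
\[
\frac{a(m+e_i)}{a(m)}=(-1)^{(A-I)_{ii}}\,q^{-((A-I)m)_i-(A-I)_{ii}}\,\frac{1}{1-q^{m_i+1}}\,\frac{\prod_{j=1}^N(q^{(Am)_j+A_{jj}};q)_{A_{ij}}}{\prod_{j=1}^N(q^{((A-I)m)_j+A_{jj}};q)_{A_{ij}-\delta_{ij}}}\,,
\]
and clearing denominators yields a polynomial $q$-recurrence satisfied by the coefficients $a(m)$.

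Next I would repackage this recurrence into the operator identity \eqref{eq:qdiff.sym.nahm}. Multiply the recurrence by $t^m$ and sum over $m$. The device that keeps this clean is to apply $(1-\sigma_i)$ first: since $(1-\sigma_i)t^m=(1-q^{m_i})t^m$ vanishes at $m_i=0$, the effective sum runs over $m_i\geq1$, so the reindexing $m\mapsto m+e_i$ produces no boundary term. Under the dictionary $\sigma_l t^m=q^{m_l}t^m$ the scalars $q^{(Am)_j}$ and $q^{((A-I)m)_j}$ turn into the operator monomials written $\sigma^{Am}$ and $\sigma^{(A-I)m}$ in \eqref{eq:qdiff.sym.nahm}, while the combined shift $a(m)\mapsto a(m+e_i)$, $t^m\mapsto t_i^{-1}t^{m+e_i}$ produces a prefactor consisting of $t_i$ times a monomial in the shift operators, matching the $t_i\,\sigma^{(A-I)_{i\cdot}}$ of \eqref{eq:qdiff.sym.nahm} once the $q^{-((A-I)m)_i}$ from the Gaussian is converted and the shift-operator sign convention is fixed. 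Transporting the numerator $q$-Pochhammer symbols onto the $t_i$-side and the ``upper''-denominator ones (together with the $(1-q^{m_i+1})$, which combines with $1-\sigma_i$) onto the other side then gives \eqref{eq:qdiff.sym.nahm}.

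The hard part will be precisely this last repackaging, and three points need care. First, normalization: the term ratio naturally produces $q$-Pochhammer symbols of lengths $A_{ij}$ and $A_{ij}-\delta_{ij}$ based at $q^{(Am)_j+A_{jj}}$ and $q^{((A-I)m)_j+A_{jj}}$, whereas \eqref{eq:qdiff.sym.nahm} is written with lengths $A_{ij}$ and $A_{ij}-1$ based at $q^{A_{jj}}\sigma^{Am}$ and $q^{A_{jj}-(A-I)_{ij}}\sigma^{(A-I)m}$; reconciling the two requires repeated use of $(q^{a};q)_{r}=(1-q^{a})(q^{a+1};q)_{r-1}$ together with $(A-I)_{ij}=A_{ij}-\delta_{ij}$, absorbing the surplus $q^{-((A-I)m)_i}$ from the Gaussian along the way. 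Second, entries $A_{ij}\le0$: a $q$-Pochhammer of non-positive length is read as $(x;q)_{-r}=1/(xq^{-r};q)_r$, and such factors must be carried to the opposite side of the equation — this is exactly why \eqref{eq:qdiff.sym.nahm} distributes its $q$-Pochhammer products between the $(1-\sigma_i)$-side and the $t_i$-side rather than collecting them on one side. Third, one should confirm that nothing survives on the $m_i=0$ locus after $(1-\sigma_i)$, so no correction term is needed. None of these steps is conceptually deep, but the bookkeeping is the real content of the proof; before declaring the final system equal to \eqref{eq:qdiff.sym.nahm} I would also sanity-check it on a low-rank matrix $A$ and against the Nahm-type $q$-difference system satisfied by $f_A(t,q)$ itself.
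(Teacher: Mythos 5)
Your proposal takes essentially the same route as the paper: the paper's proof consists precisely of computing the ratio of the summand of \eqref{SfA1} at $m+\delta_i$ to that at $m$ and then asserting that this, together with Proposition~\ref{prop.A1}, yields the $q$-difference system, leaving the operator repackaging implicit just as you do. Your term ratio matches the paper's (your denominator Pochhammer length $A_{ij}-\delta_{ij}$, versus the paper's printed $A_{ij}-1$, is in fact the more careful bookkeeping), so the proposal is correct and coincides with the paper's approach.
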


\begin{proof}
The quotient of the summand of the RHS of Equation~\eqref{SfA1} at $m+\delta_i$
by that at $m$ \hbox{equals to}
\be
  (-1)^{(A-I)_{ii}}q^{-((A-I)m)_i-(A-I)_{ii}}\frac{t_i}{1-q^{m_i+1}}
  \prod_{j=1}^N
  \frac{(q^{(Am)_j+A_{jj}};q)_{A_{ij}}}{(q^{(Am)_j+A_{jj}-m_j};q)_{A_{ij}-1}}\,.
\ee
This and Proposition~\ref{prop.A1} implies the corollary.
\end{proof}  

\begin{proof}[Proof of Theorem~\ref{thm.fA1}]
The proof will follow the steps of the proof of Theorem~\ref{thm.ct2} with
some modifications due to the fact that we are dealing with algebraic rather than
with rational functions. To prove Theorem~\ref{thm.fA1}, we need to show that:
\begin{itemize}
\item
  $\Sf_A(t,q)$ can be expanded as series in $q-\z_m$ with coefficients in
  $R_m=R[t^{1/m},\z_m]$,
\item
  these expansions $p$-adically glue after completion and Frobenius twist.
\end{itemize}

To compute the expansion of $\Sf_A(t,q)$ near $q=1$, we need the following
identity~\cite[Eq. 4.1.5-4.1.7]{RV:Apoly}
\be
\label{deltaid}
  \frac{1}{\delta(t)}
  \=
  \sum_{k\in\BZ_{\geq0}^N}
  (-1)^{\diag(A-I)k} \prod_{j=1}^N
  \binom{(Ak)_j+A_{jj}-1}{k_j}
  t_1^{k_1} \dots t_N^{k_N}
\ee
which together with Equation~\eqref{SfA1} implies that
$\Sf_A(t,q)=\frac{1}{\delta(t)} + O(q-1)$.

Equations~\eqref{SfA1}, ~\eqref{deltaid} and~\eqref{qq1} imply that
\be
\label{S1}
\Sf_A(t,q) \in S \otimes \BQ[\![q-1]\!]
\ee
where
\be
\label{Sd}
S\= \BZ[t^{\pm 1}]\langle \delta(t)^{-1} \rangle
\;:=\; \BZ[t^{\pm 1}][(\pt_t)^\ell(\delta(t)^{-1}) \, | \ell \geq 0] 
\ee
We claim that
\be
\label{SsubR}
S \subset R \,.
\ee
Indeed, Equation~\eqref{deltadef} implies that
$\delta \in \BZ[z^{\pm 1}]$ where $z^{\pm 1}=(z_1^{\pm 1},\dots,z_N^{\pm 1})]$.
It follows that
$\pt_t (\delta^{-1}) \in \BZ[\delta^{-1},z^{\pm 1}, (\pt_t z)^{\pm 1}]$ and
$\pt_t z_i \in \BZ[t^{\pm 1},\delta^{-1},z^{\pm 1}]$ from the definition of
$\delta$.
This proves~\eqref{SsubR}.

On the other hand, Equation~\eqref{SfA1} implies that
\be
\label{AAA}
\Sf_A(t,q) \in \BZ[q^{\pm1}][\![t]\!] \subset \BZ[\![t]\!][\![q-1]\!]
\ee
Moreover, we claim that
\be
\label{RQ}
R \otimes \BQ \cap \BZ[\![t]\!] \= R \,.
\ee
Indeed, the series $z_j(t)$ of Equation~\eqref{XA} and $\delta(t)$ from
Equation~\eqref{deltadef} satisfy $z_j(t)=1+O(t)$ for $j=1,\dots,N$ and
$\delta(t)=1 +O(t)$, which implies that $R \subset \BZ[\![t]\!]$ proving one
inclusion in~\eqref{RQ}. The other inclusion is proven as in~\cite[Eqn.(169)]{GSWZ}.

Equations~\eqref{S1}, ~\eqref{Sd}, ~\eqref{AAA} and~\eqref{RQ} conclude that
$\Sf_A(t,q) \in R[\![q-1]\!]$. 

To compute the expansion $\Sf_A(t,q)$ from Equation~\eqref{SfA1} near $q=\z_m$,
we use $m$-congruence sums (i.e., taking the summation variable $k\in\BZ_{\geq0}^N$
in Equation~\eqref{SfA1} in a fixed congruence class modulo $m$) to obtain
the following formula for the constant term of the expansion of $\Sf_A$ at
$q=\z_m$:
\be
\begin{aligned}
  \Sf_A(t^{1/m},q)
  &\=
  \frac{1}{\delta(t)}
  \sum_{\ell\in(\BZ_{\geq0}/m\BZ_{\geq0})^N}
  (-1)^{\diag(A-I)\ell}\z_m^{-\ell(A-I)\ell/2-\diag(A-I)\ell/2}\\
  &\qquad\qquad\qquad\times\prod_{j=1}^N
  \left[\!\!\begin{array}{cc} (A\ell)_j+A_{jj}-1 \\ \ell_j
    \end{array}\!\!\right]_{\z_m} t_1^{\ell_1/m} \dots t_N^{\ell_N/m} + O(q-\z_m) \,.
\end{aligned}
\ee
Note that the above sum is in $\BZ[t^{1/m},\z_m]$.
Then, the expansion of the $q$-binomial at roots of unity~\eqref{qqm}
and the arguments of the above proof implies that
$\Sf_A(t^{1/m},q) \in R_m[\![q-\z_m]\!]$ for all $m \geq 1$. This proves that
$\Sf_A(t,q)$ can be expanded for $q$ near a complex root of unity.

To show that these expansions glue, we use $q$-holonomic methods, namely that
the system of $q$-difference equations~\eqref{eq:qdiff.sym.nahm} has a unique solution
as power series in $t$ equal to $1+\mathrm{O}(t)$. By definition $\Sf_A(t,q)$ satisfies
this system of linear $q$-difference equations.
Therefore, the expansions for $q$ near $\z_m$ also satisfy this difference equation.
This implies that the $p$-adic re-expansions also satisfies that same equation with
initial condition $1+\mathrm{O}(t)$ and therefore must be equal after undoing the
$t^{1/m}$ via the Frobenius.
\end{proof}

\subsection{Push-forward to cohomology}
\label{sub.pf}

In this section, we will prove Theorem~\ref{thm.push}.
The result is almost obvious but we need to check that the series converge.

\begin{proof}[Proof of Theorem~\ref{thm.push}] 
Fix $f(q)\in\calH_{R/\BZ[x,\lambda]}^{\mathrm{an}}$ and $\om\in\Om^{n}(X)$ so
that $\varphi_p (\om)=\om$, where $X=\Spec(R)$ and $B=\Spec(\BZ[\lambda,1/\Delta])$
(where $\lambda$ is a possible empty set of variables).
Then, we see that $\Om^{n}(X/B)$ is an $R$-module and we have
$f(q)\om\in\calH(\Om^{n}(X/B))$.
We can then reduce this to give an element of
\be
f(q)\om\in \prod_m H^{n}_\dR(X/B)[\z_m][\![q-\z_m]\!]\,.
\ee
The assumption that $f\in\calH_{R/\BZ[x,\lambda]}^{\mathrm{an}}$, implies
that the growth of the denominators appearing in the expansion of $f$ as power
series in $q-\z_m$ is linear.
Therefore, we see that, after inverting $\Delta$, applying Griffiths
reduction~\cite{Griffiths} introduces logarithmically growing valuations.
Therefore, the series are convergent and therefore their re-expansions must agree.
This concludes the proof of the theorem. 
\end{proof}

In the rest of this section, we revisit the examples from subsections
~\ref{sub.leg1a}, ~\ref{sub.leg1b} and ~\ref{sub.quintica} and apply the second
method to obtain elements of the Habiro cohomology. Our examples illustrate both 
Theorem~\ref{thm.push}, and the fact that the two methods of Theorems~\ref{thm.1}
and~\ref{thm.push} constructing elements in the Habiro cohomology are complementary,
and lead to the same elements. 

\subsection{The Legendre family}
\label{sub.leg1b}

Proposition~\ref{prop.A1} with $A=2$ and a slight shift $t \mapsto q^{-1}t$ imply
that
\be
\Sf(t,q)
\=
\Big(\sum_{m=0}^{\infty} \frac{q^{m^2}}{(q;q)_m}t^{m}\Big)
\Big(\sum_{\ell}^{\infty}
\frac{q^{-\ell(\ell+1)}}{(q^{-1};q^{-1})_{\ell}}t^{\ell}\Big)
  \=
  \sum_{k=0}^{\infty}(-1)^{k}q^{-k(k+1)/2}
  \left[\!\!\begin{array}{cc} 2k \\ k \end{array}\!\!\right]_qt^k \,.
\ee
This and Theorem~\ref{thm.fA1} defines an element $\Sf(t,q)$ of the Habiro ring of
the \'etale map
\be
\BZ[t] \to \BZ[t][z^{\pm1}]/(z^2-(1+4t))\,.
\ee
It follows that $\Sf(x,q)\Sf(x^{-1}\lambda,q)$ is an element of the
Habiro ring of the \'etale map 
\be
\label{xlaz}
  \BZ[\lambda,x^{\pm1}]
  \to
  \BZ[\lambda,x^{\pm1}][z^{\pm1}]/(z^2-(1+4x)(1+4\lambda x^{-1}))\,.
\ee
We can then define a class in the Habiro cohomology 
\be
\om_q = \Sf(\lambda,q)\Sf(x^{-1}\lambda,q)\frac{dx}{x} \in \calH^1_\nv(X/B)
\ee
where $X/B$ is the family of elliptic curves defined by
$z^2 = (1+4x)(1-4\lambda x^{-1})$.
Since
\be
\label{Sfxx}
\Sf(x,q)\Sf(x^{-1}\lambda,q) \=
\sum_{k,\ell=0}^{\infty}(-1)^{k+\ell}q^{-k(k+1)/2-\ell(\ell+1)/2}
\left[\!\!\begin{array}{cc} 2k \\ k \end{array}\!\!\right]_q
\left[\!\!\begin{array}{cc} 2\ell \\ \ell \end{array}\!\!\right]_q
x^{k-\ell}\lambda^\ell 
\ee
it follows that $\om_q$ satisfies the same $q$-difference equation in $\lambda$
as the $x$-constant term in the series~\eqref{Sfxx}, namely the series
\be
  \sum_{k=0}^{\infty}
  q^{-k(k+1)}
  \left[\!\!\begin{array}{cc} 2k \\ k \end{array}\!\!\right]_q^2
  \lambda^k\,.
\ee
This gives a different $q$-deformation to that of section~\ref{sub.leg1a}.
The class in section~\ref{sub.leg1a} can be constructed in a similar way as a
push-forward by taking
\be
\label{f111}
  f(\lambda,x,q)
  \=
  \sum_{k,\ell=0}^{\infty}
  \left[\!\!\begin{array}{cc} 2k \\ k \end{array}\!\!\right]_q
  \left[\!\!\begin{array}{cc} 2\ell \\ \ell \end{array}\!\!\right]_q
  \lambda^kx^{\ell-k}\,.
\ee
Remark~\ref{rem.twist} implies that this gives an element of the Habiro ring of
the \'etale map
\be
\BZ[\lambda,x^{\pm1}]\to
\BZ[\lambda,x^{\pm 1}][y^{\pm1}]/(y^2-(1-4x)(1-4\lambda x^{-1}))\,.
\ee
The specialization $f(\lambda,x)=f(\lambda,x,1)$ is the algebraic function
\be
f(\lambda,x) \= \sum_{k,\ell=0}^{\infty} \binom{2k}{k}^2 \binom{2\ell}{\ell}^2 
\lambda^kx^{\ell-k} = \frac{1}{\sqrt{(1-4x)(1-4\lambda x^{-1})}} = \frac{1}{y}
\ee
on the family of elliptic curves $y^2-(1-4x)(1-4\lambda x^{-1})$.

On the other hand, computing the first few terms, we find
\be
\begin{aligned}
  f(\lambda,x,q)
  &\=
  (1+\tfrac{1}{2}(2\th_\lambda^2 + 2\th_x\th_\lambda + \th_x^2)(q-1)\\
  &\qquad+\tfrac{1}{24}\big(12\th_\lambda^4 + (24\th_x + 4)\th_\lambda^3
  + (24\th_x^2 + 6\th_x - 10)\th_\lambda^2\\
  &\qquad + (12\th_x^3 + 6\th_x^2 - 10\th_x)\th_\lambda
  + (3\th_x^4 + 2\th_x^3 - 5\th_x^2)\big)(q-1)^2+\cdots)f(\lambda,x)\,.
\end{aligned}
\ee
which after reduction, implies that
\be
\begin{aligned}
  f(\lambda,x,q)
  &\=
  \frac{1}{y}+
  \frac{1}{y^5}\big(x - 16\lambda + \lambda x^{-1}(1+2\lambda x^{-1})
  + x^2 (2 + 64\lambda^2 x^{-3})\big)(q-1)\\
  &\qquad+\frac{1}{y^9}\Big((11 x^2 + 12 x^3 - 14 x^4)
  + (x - 182 x^2 - 192 x^3 + 256 x^4) (\lambda x^{-1})\\
  &\qquad+ (11 - 182 x + 2148 x^2 - 1664 x^3 + 1280 x^4) (\lambda x^{-1})^2\\
  &\qquad+ (12 - 192 x - 1664 x^2 - 6144 x^3 + 6656 x^4) (\lambda x^{-1})^3\\
  &\qquad+ (-14 + 256 x + 1280 x^2 + 6656 x^3 - 6144 x^4) (\lambda x^{-1})^4\Big)
  (q-1)^2+\cdots\,.
\end{aligned}
\ee
Pushing forward, and illustrating Theorem~\ref{thm.push}, we obtain the
element
\be
  f(\lambda,x,q)\frac{dx}{x} \in \calH^1_\nv(X/B)\,,
\ee
which agrees with the element in the Habiro cohomology constructed in
Section~\ref{sub.leg1a}.

\subsection{The quintic 3-fold}
\label{sub.quinticb}

The last example in this section illustrating Theorem~\ref{thm.push} is the
quintic 3-fold. Let
\be
f_\psi(x) \= \psi x_1x_2x_3x_4x_5 -(x_1^5+x_2^5+x_3^5+x_4^5+x_5^5)
\ee
for $x=(x_1,\dots,x_5)$. Consider the (homogeneous, degree $0$) rational function
$g_\psi(x)$ and differential form $\Omega_0$ given by
\be
g_\psi(x) \= \frac{\psi x_1 x_2 x_3 x_4 x_5}{f_\psi(x)},
\qquad
\Omega_0 \= \sum_{j=1}^5 (-1)^{j-1} \frac{dx_1}{x_1}\wedge \dots
\wedge \widehat{\frac{dx_j}{x_j}} \wedge \dots \wedge \frac{dx_5}{x_5}
\ee
\be
g_\psi(x) =\!\!
  \sum_{m_i\in\BZ_{\geq0}^{5}}
  \binom{m_1+\cdots+m_5}{m_1,\dots,m_5}x_1^{4m_1-m_2-\cdots-m_5}
  \cdots x_5^{4m_5-m_1-\cdots-m_4}\psi^{-m_1-\cdots-m_5}
\ee
using the standard multinomial notation.
Since $g_\psi(x) \in \BQ(\psi,x)$, by Remark \ref{rem.twist} it follows that 
its $q$-deformation
\be
\begin{aligned}
g_\psi(x,q) &=\!\!
  \sum_{m_i\in\BZ_{\geq0}^{5}}
  \left[\!\!\begin{array}{cc} m_1+\cdots+m_5 \\ m_1,\cdots,m_5
    \end{array}\!\!\right]_q\!\!
  x_1^{4m_1-m_2-\cdots-m_5}\cdots x_5^{4m_5-m_1-\cdots-m_4}\psi^{-m_1-\cdots-m_5}.
\end{aligned}
\ee
defines an element in the Habiro ring
$\calH_{\BZ[\psi^{\pm 1},x][f_\psi(x)^{-1}]/\BZ[\psi^{\pm 1},x]}$.
Let $X/B$ denote the family of affine 3-folds defined by $f_\psi(x)=0$ over
$B=\Spec(\BZ[\psi^{-5},\tfrac{1}{\Delta(\psi^{-5})}])$ where
$\Delta(\lambda)=5^5 \lambda -1$.
We can then define a class in $\calH_\nv^3(X/B)$ by
\be
\begin{aligned}
  &\om_q
  \=
  \Res \, \big( g_\psi(x,q) \Omega_0 \big)\,.
\end{aligned}
\ee
Here, the residue map along $f_\psi(x)$ is the one discussed in Cox--Katz
~\cite[Sec.5.3]{Cox-Katz}. 

Now $\Res g_\psi(x) \Omega_0$ satisfies the same linear differential
equation as $\Res_{x=0}g_\psi(x)$, where $\Res_{x=0}$ means taking the
constant term (see Equation~\eqref{roughly}) and
\be
\Res_{x=0}g_\psi(x) \= \sum_{m\in\BZ_{\geq0}} \frac{(5m)!}{m!^5} \psi^{-5m} \,.
\ee
A detailed proof of this fact is given in~\cite[Example 5.4.1]{Cox-Katz}.

It follows that $\om_q$ satisfies the same $q$-difference equation as the sum
\be
  \sum_{m\in\BZ_{\geq0}} \frac{(q;q)_{5m}}{(q;q)_m^5}
  \psi^{-5m}\,.
\ee
Up to a change of variables $\psi^{-5} \mapsto \lambda$, this push-forward gives
rise to the same class in Habiro cohomology as the one in subsection
~\ref{sub.quintica}. 


\section{The 3D-index of the $4_1$ knot}
\label{sec.41}

In this section we discuss in detail the 3D-index of the
simplest hyperbolic $4_1$ knot (a sequence of $q$-hypergeometric series) whose
experiments presented briefly in the introduction provably produce elements
of the first Habiro cohomology of an elliptic curve $X$ (the $A$-polynomial of the
$4_1$ knot). Our presentation of the 3D-index of the $4_1$ knot will
follow~\cite{GW:periods} and will blend experiments, statements and theorems.

To simplify the notation, we use $(x,y)$ instead of $(z_1,z_2)$
in Equation~\eqref{RA} and likewise $(x,y,z)$ instead of $(z_1,z_2,z_3)$.

\subsection{Unsymmetrised series}

Consider the collection of $q$-hypergeometric series
\be
\label{hndef}
h_{n}(q) \= \sum_{k\in\BZ}
(-1)^{k+n}\frac{q^{k(k+1)/2+nk+n(2n+1)/2}}{(q;q)_{k}(q;q)_{k+2n}},
\qquad (n \in \BZ) \,.
\ee
from~\cite[Prop.8]{GW:periods}. This is one of the two colored descendant
holomorphic blocks of the $4_1$ knot.
Using the variables $(q^k,q^n)=(x,y)$, we find that the 
Nahm equations of $h_n(q)$ (that come from the summand of $h_n(q)$) are given by
\be
\label{nahmdesc}
xy+(1-x^2y)(1-y)\=0\,.
\ee
This equation defines an affine curve $X=X_{xy}=\Spec(R)$\footnote{the ring $R$ in
  this section should not be confused with any rings with the same name from previous
  sections.}
in $\mathbb{G}_m^2$
with coordinates $(x,y)$ and gives rise to an \'etale map
\be
\label{41et}
\BZ[x^{\pm 1}]\to R:=
\BZ[x^{\pm 1}][y^{\pm 1},\tfrac{1}{\delta}]/(xy+(1-x^2y)(1-y))
\ee
and to a smooth map $X/\mathbb{G}_m$, 
where 
\be
\label{delta2}
\delta^2(x) \=x^{2} - 2x - 1 - 2x^{-1} + x^{-2}\,.
\ee
It is easy to see that $X/\Spec(\BZ[1/30])$ is a smooth affine elliptic curve.

A numerical computation of the asymptotics of $h_n(q)$ with
$q=e^\hbar$ as $\hbar \to 0$ are given by
\be
h_{n}(e^\hbar) \sim
f_{X/\mathbb{G}_m}(e^\hbar)|_{x=e^{n \hbar}}\,,
\ee
where
\be
\label{fXvals}
\begin{aligned}
  f_{X/\mathbb{G}_m}(e^\hbar)
  \= e^{\tfrac{V(x)}{\hbar}}
  \frac{1}{\sqrt{\delta}}\Big(1&+
  \frac{\hbar}{24\delta^3}(x^3 - x^2 - 2x + 15 - 2x^{-1} - x^{-2} + x^{-3})\\
  &+\frac{\hbar^2}{1152\delta^{6}}(x^6 - 2x^5 - 3x^4 + 610x^3 - 606x^2 - 1210x + 3117\\
  &- 1210x^{-1} - 606x^{-2} + 610x^{-3} - 3x^{-4} - 2x^{-5} + x^{-6})+\cdots\Big)\,.
\end{aligned}
\ee
We have computed 150 coefficients of the above series.
%
%
Note that the specialization to $x=1$ of the above series is the perturbative
Chern--Simons series of the $4_1$ knot few terms of which were given
in~\cite[Eqn.1.3]{GZ:kashaev} and~\cite[Eqn.1]{GSWZ}.

Aside from numerical computations, formal Gaussian integration from~\cite{DG},
applied to the $q$-hypergeometric sum~\eqref{hndef}, works and defines a collection
$f_{X/\mathbb{G}_m}(q)$ of power series around each root of unity. 

\begin{remark}
\label{rem.1}
The arguments of~\cite{GSWZ} (together with an appropriate extension of relative
$K$-theory of the \'etale map~\eqref{41et}) ought to imply that
$f_{X/\mathbb{G}_m}(q)$ is a section of a line bundle over the Habiro ring
$\calH_{R/\BZ[x^{\pm1}]}$. 
\end{remark}

\subsection{Symmetrised series}

We consider next the symmetrisation
\be
\label{fsdef}
\Sf_{X/\mathbb{G}_m}(q) = f_{X/\mathbb{G}_m}(q) f_{X/\mathbb{G}_m}(q^{-1})
\ee
of $f_{X/\mathbb{G}_m}(q)$. This is motivated on the one hand by the 3D-index of
$4_1$ (which is a bilinear expression on colored holomorphic blocks,
see e.g.,~\cite[Prop.9]{GW:periods}), and on the other hand by~\cite{GSWZ}
as well as by Section~\ref{sub.exphab}.

\begin{theorem}
\label{thm.2}  
$\Sf_{X/\mathbb{G}_m}(q)$ is an element of the Habiro ring
$\calH_{R/\BZ[x^{\pm1}]}$.
\end{theorem}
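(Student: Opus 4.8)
The plan is to run the proof of Theorem~\ref{thm.fA1} essentially verbatim, adapted to the twisted Nahm sum~\eqref{hndef} and its symmetrisation. A preliminary remark makes this possible: although $f_{X/\mathbb{G}_m}(q)$ is \emph{not} a power series in $q-\z_m$ --- it carries the exponential prefactor $e^{V(x)/\hbar}$ and the half-integral factor $\delta^{-1/2}$ of~\eqref{fXvals} --- in the symmetrisation~\eqref{fsdef} the factors $e^{V(x)/\hbar}$ and $e^{-V(x)/\hbar}$ cancel while $\delta^{-1/2}\cdot\delta^{-1/2}=\delta^{-1}\in R$, so $\Sf_{X/\mathbb{G}_m}(q)$ is a genuine collection of power series in $q-\z_m$ with coefficients in $R\otimes\BQ$. (This is also the structural reason why, in contrast with the conjectural Remark~\ref{rem.1}, the present statement is unconditional: the symmetrisation trivialises the line bundle.) What remains is to upgrade the coefficient ring from $R\otimes\BQ$ to $R_m=R[x^{1/m},\z_m]$ and to prove the $p$-adic gluing.

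For the coefficient ring I would first produce a manifestly integral formula for $\Sf_{X/\mathbb{G}_m}(q)$, playing the role that Proposition~\ref{prop.A1} plays in the proof of Theorem~\ref{thm.fA1}. The sum~\eqref{hndef} is a twisted Nahm sum in the variables $(k,n)$, so, following~\cite{GSWZ} and the constructions of Section~\ref{sub.exphab} (together with the twist of Remark~\ref{rem.twist}) and the push-forward of Section~\ref{sub.method2} in the $n$-variable, $\Sf_{X/\mathbb{G}_m}(q)$ should equal, near each root of unity, the expansion of an honest $q$-hypergeometric series --- a symmetrised Nahm-type sum, which is a building block of the descendant $3$D-index of the $4_1$ knot~\cite{GW:periods} --- whose coefficients, after the substitution $(q^k,q^n)=(x,y)$ and the Nahm relation~\eqref{nahmdesc}, are Laurent polynomials in $x$ with coefficients in $\BZ[q^{\pm1}]$; in particular this integrality clears the apparent denominators $\tfrac1{24},\tfrac1{1152},\dots$ of~\eqref{fXvals}. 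Expanding this honest series at $q=1$ one finds, exactly as in the proof of Theorem~\ref{thm.fA1}, that the leading term is the algebraic function $\delta^{-1}\in R$ and that Lemma~\ref{lem.q1} together with~\eqref{eq:qp.q1} force $\Sf_{X/\mathbb{G}_m}(q)\in S\otimes\BQ[\![q-1]\!]$ with $S=\BZ[x^{\pm1}]\langle\delta^{-1}\rangle\subseteq R$; intersecting with the relevant integral power-series ring (the analogue of $R\otimes\BQ\,\cap\,\BZ[\![t]\!]=R$, using a point of $X$ at which~\eqref{hndef} collapses to a one-dimensional sum) then gives $\Sf_{X/\mathbb{G}_m}(q)\in R[\![q-1]\!]$. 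Passing to $q$ near $\z_m$ by the $m$-congruence-sum trick and Lemma~\ref{lem:qpzm} gives the coefficients in $R_m$, as required.

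The gluing is then the same $q$-holonomic argument as before. Since $h_n(q)$ is $q$-holonomic in $n$, the series $\Sf_{X/\mathbb{G}_m}(q)$ satisfies a fixed linear $q$-difference equation in $x$ --- the quantum $\widehat A$-polynomial of the $4_1$ knot --- whose solution space near each $\z_m$, subject to the normalisation dictated by~\eqref{fXvals}, is one-dimensional; the expansion at $\z_{pm}$ and the $p$-adic re-expansion of the expansion at $\z_m$ both solve it with the same leading behaviour, hence coincide after the Frobenius twist $x^{1/pm}\mapsto x^{1/m}$. The convergence on $|q-\z_m|_p<1$ needed to re-expand follows from the integral formula (bounded denominators in $q$) together with the logarithmic growth produced in passing from $\hbar$ to $q-\z_m$, exactly as in Section~\ref{sec.method2}; alternatively one may invoke the topological invariance of the $3$D-index and of the perturbative blocks~\cite{GHRS,GK:mero}.

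The main obstacle will be the first step: producing the integral formula and, above all, showing that the coefficients genuinely lie in the \'etale algebra $R$ rather than merely in $R\otimes\BQ$. In the proof of Theorem~\ref{thm.fA1} the ring $R_A$ came equipped with a tautological embedding $R_A\hookrightarrow\BZ[\![t]\!]$ at $t=0$, which made the intersection argument transparent; here $R$ is \'etale over $\BZ[x^{\pm1}]$ rather than over a polynomial ring with a distinguished origin, so one must first choose a suitable base point on the elliptic curve $X$ --- a cusp of the $A$-polynomial of $4_1$, where the sum~\eqref{hndef} degenerates --- and verify $p$-integrality \'etale-locally there. The closely related difficulty, namely matching the ``period'' data across roots of unity (the analogue for $\delta^{-1/2}$ of Coleman's $\Gamma_p$-formula of Section~\ref{sec:fermat}), is exactly what the symmetrisation removes: it replaces $\delta^{-1/2}$ by $\delta^{-1}\in R^\times$, whose compatibility with Frobenius is automatic. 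This is the structural reason the symmetrised statement is clean while the unsymmetrised one of Remark~\ref{rem.1} remains conjectural.
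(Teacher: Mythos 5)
Your overall strategy---symmetrise to cancel $e^{V(x)/\hbar}$ and replace $\delta^{-1/2}$ by $\delta^{-1}\in R$, then find a manifestly integral formula and glue by $q$-holonomicity---has the right shape, but it defers exactly the step on which the paper's proof rests: an explicit closed formula for $\Sf_{X/\mathbb{G}_m}(q)$ that bypasses formal Gaussian integration. The paper proves Theorem~\ref{thm.2} via the residue formula~\eqref{fsres}, which identifies $\Sf_{X/\mathbb{G}_m}(q)$ with the residue at $w=z$ (where $z^m=y$ and $y$ lies on the Nahm curve~\eqref{nahmdesc}) of the one-variable series $\sum_{k\geq0}(-1)^{k}q^{k(k+1)/2}w^kx^k/\big((qw;q)_{k}(qwx^2;q)_{k}\big)\,\tfrac{dw}{w}$. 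Once that identity is available, integrality and gluing follow either from the arguments of \cite[Sec.4.1]{GSWZ} or, as in the paper's alternative argument, from Theorem~\ref{thm.ct2} applied to the inner series (an element of the Habiro ring of the \'etale algebra $S=\BZ[w^{\pm 1},x^{\pm 1}][(1+\tfrac{wx}{(1-w)(1-wx^2)})^{-1}]$ over $\BZ[w^{\pm1},x^{\pm1}]$) followed by the push-forward Theorem~\ref{thm.push} with $\om=\tfrac{dw}{w}$. Note that this push-forward is taken along the auxiliary variable $w$ attached to the summation index $k$, not along $n$: pushing forward along $n$ (i.e.\ along $x$), as you propose, is what yields Theorem~\ref{thm.3}, a different statement in which $x$ is integrated out rather than kept as the base of the \'etale map~\eqref{41et}. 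In your write-up the existence of the integral formula is only asserted (``should equal \dots an honest $q$-hypergeometric series''), yet this is precisely the nontrivial content, because $f_{X/\mathbb{G}_m}(q)$, and hence $\Sf_{X/\mathbb{G}_m}(q)$, is \emph{defined} at each root of unity only through formal Gaussian integration applied to~\eqref{hndef}.

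The substitute you offer for Proposition~\ref{prop.A1} cannot be run as stated, and you acknowledge the obstruction yourself: unlike $R_A\hookrightarrow\BZ[\![t]\!]$, the algebra $R/\BZ[x^{\pm1}]$ has no distinguished origin, the series~\eqref{fXvals} is not a $q$-series in a small auxiliary parameter to which a $q$-binomial manipulation like~\eqref{pfA} applies, and choosing ``a cusp where~\eqref{hndef} degenerates'' and verifying $p$-integrality there is a program, not an argument. The paper avoids this entirely by introducing $w$, where the inner series does have the $t$-like structure covered by Theorem~\ref{thm.ct2}. Two incidental points are also off: the coefficients in~\eqref{fsres} lie in $R_m=R[x^{1/m},\z_m]$ (they involve $y$ and $\delta^{-1}$), not in Laurent polynomials in $x$ over $\BZ[q^{\pm1}]$; and your gluing argument via the $\widehat{A}$-polynomial equation in $x$ would still require a proof that the normalised solution is unique among the relevant expansions at each $\z_m$, whereas the paper inherits gluing directly from Theorems~\ref{thm.ct2} and~\ref{thm.push}. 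In short, the missing idea is the residue formula~\eqref{fsres} itself.
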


For example, the expansion of the above element near $q=1$ is given by
\be
\label{fsymval}
\Sf_{X/\mathbb{G}_m}(q) \=
\frac{1}{\delta}
\big(1+
\frac{(q-1)^2}{\delta^{6}}(x^3 - x^2 - 2x + 5 - 2x^{-1} - x^{-2} + x^{-3})
+ \dots \big) \,.
\ee

\begin{proof}
As was discussed in~\cite[Sec.4.1]{GSWZ}, we can give a residue formula
for the symmetrisation $\Sf_{X/\mathbb{G}_m}(q)$ that avoids formal Gaussian
integration, proves integrality and gluing and gives a self-contained proof of
Theorem~\ref{thm.2}. This residue formula is
\be
\label{fsres}
\Sf_{X/\mathbb{G}_m}(q) \= \sum_{z^m=y}
\underset{w=z}{\Res}
\sum_{k=0}^{\infty}
(-1)^{k}\frac{q^{k(k+1)/2}w^kx^k}{(qw;q)_{k}(qwx^2;q)_{k}}
\frac{dw}{w} \in R[x^{1/m},\z_m][\![q-\z_m]\!]
\ee
where $y$ satisfies~\eqref{nahmdesc}. The arguments
in~\cite[Sec.4.1]{GSWZ} show that the collection of series ~\eqref{fsres} is
well-defined and $p$-adically glues, thus defines an element of the Habiro ring  
$\calH_{R/\BZ[x^{\pm1}]}$.

An alternative proof that the series~\eqref{fsres} are in the Habiro ring
$\calH_{R/\BZ[x^{\pm1}]}$ can be obtained as an application of Theorems~\ref{thm.ct2}
and~\ref{thm.push} as follows.
The proof of Theorem~\ref{thm.ct2} implies that
\be
\label{gsres}
\sum_{k=0}^{\infty}
(-1)^{k}\frac{q^{k(k+1)/2}w^kx^k}{(qw;q)_{k}(qwx^2;q)_{k}}
\in S[w^{1/m}, x^{1/m},\z_m][\![q-\z_m]\!]
\ee
lies in the Habiro ring $\calH_{S/\BZ[w^{\pm 1},x^{\pm 1}]}$ where
$S:=\BZ[w^{\pm 1},x^{\pm 1}][(1+ \tfrac{wx}{(1-w)(1-wx^2)})^{-1}]$. Using
$\om=\frac{dw}{w}$ in Theorem~\ref{thm.push}, it implies that~\eqref{fsres}
lies in $\calH_{R/\BZ[x^{\pm1}]}$.
\end{proof}

\subsection{Summed symmetrised series}

So far we discussed one of the descendant holomorphic blocks~\eqref{hndef} of
the $4_1$ knot. The (meromorphic) 3D-index however, is a sum over the integers of a
bilinear combination such blocks; see~\cite[Eqn.(34)]{GW:periods}. In other words,
the $3D$-index of the $4_1$ knot is related to the series
\be
\label{3dsum}
  \sum_{n\in\BZ}
  h_{n}(q)h_{n}(q^{-1})
  \=
  \sum_{n,k,\ell\in\BZ}
  (-1)^{k+\ell}\frac{q^{k(k+1)/2+\ell(\ell+1)/2+nk+n\ell+n(2n+1)}}{
    (q;q)_{k}(q;q)_{k+2n}(q;q)_{\ell}(q;q)_{\ell+2n}}\,.
\ee
Geometrically, summation over $n$ corresponds to integration over $x=q^n$ and
this results to a push-forward to an affine scheme $Y$, defined shortly, which
is a copy of the elliptic curve $X/\mathrm{Spec}(\BZ[1/30])$ plus 4 additional
points. Indeed, with $(q^n,q^k,q^\ell)=(x,y,z)$, the Nahm equations associated
to the $q$-series ~\eqref{3dsum} are given by
\be
\label{xyznahm}
  \frac{x^4yz}{(1-x^2y)^2(1-x^2z)^2}\=1\,, \quad 
  -\frac{xy}{(1-x^2y)(1-y)}\=1\,,\quad
  -\frac{xz}{(1-x^2z)(1-z)}\=1\,.
\ee
Geometrically, the above scheme is given by
\be
Y=X_{xy} \times_{\mathbb{G}_{m,x}} X_{xz} 
\ee
where $X_{xy}/\mathbb{G}_{m,x}$ and $X_{xz}/\mathbb{G}_{m,x}$ is the affine
curve~\eqref{nahmdesc} in coordinates $(x,y)$ and $(x,z)$, respectively.

It turns out that
\be
Y/\mathbb{G}_{m,x} = X \sqcup \{p_1,p_2,p_3,p_4\}
\ee
where
\be
\label{4p}
p_1=(1,\z_6,\z_6), \quad p_2=(1,\z_6^{-1},\z_6^{-1}), \quad
p_3=(-1, \tfrac{3 + \sqrt{5}}{2}, \tfrac{3 + \sqrt{5}}{2}), \quad
p_4=(-1, \tfrac{3 - \sqrt{5}}{2}, \tfrac{3 - \sqrt{5}}{2})
\ee
consists of a copy of the elliptic curve $X$ plus 4 isolated points
with $x=\pm1$. Indeed, away from the 4 isolated points, Equations~\eqref{xyznahm}
are equivalent to
\be
-\frac{xy}{(1-x^2y)(1-y)}\=1\,,\qquad z\=(1 - y - x y + y^2 + x y^2)/(y-1)^2 \,.
\ee
The projection $(x,y,z) \mapsto x$ of $Y$ to $\mathbb{G}_{m,x}$ is shown
in Figure ~\ref{fig.Y} where the 4 branch points $b_j$ for $j=1,\dots, 4$
of $Y$ are given by
\be
\label{4b}
b_1=(\z_3,\z_6,\z_6), \quad b_2=(\z_3^{-1},\z_6^{-1},\z_6^{-1}), \quad
b_3=(\tfrac{3 - \sqrt{5}}{2}, \tfrac{3 + \sqrt{5}}{2}, \tfrac{3 + \sqrt{5}}{2}), \quad
b_4=(\tfrac{3 + \sqrt{5}}{2}, \tfrac{3 - \sqrt{5}}{2}, \tfrac{3 - \sqrt{5}}{2}) \,.
\ee

\begin{figure}[htpb!]
\begin{center}
\begin{tikzpicture}[scale=0.6]
\draw(-6,0)--(6,0) node[right]{$x\in\mathbb{G}_m$};
\filldraw (-3.5,0) circle(2pt) node[below] {$x=1$};
\filldraw (-3.5,3.5) circle(2pt) node[left] {$p_1$};
\filldraw (-3.5,2.5) circle(2pt) node[left] {$p_2$};
\filldraw (3.5,0) circle(2pt) node[below] {$x=-1$};
\filldraw (3.5,3.5) circle(2pt) node[right] {$p_3$};
\filldraw (3.5,2.5) circle(2pt) node[right] {$p_4$};
\filldraw (-4.858,3) circle(2pt) node[left] {$b_1$};
\filldraw (-2,3) circle(2pt) node[right] {$\;b_2$};
\filldraw (2,3) circle(2pt) node[left] {$b_3\;$};
\filldraw (4.858,3) circle(2pt) node[right] {$b_4$};
\draw[ultra thick] (-6,4) to[out angle=0, in angle=180,
    curve through = {(-4,2) (-2,3) (0,4) (2,3) (4,2)}] (6,4);
\draw[ultra thick] (-6,2) to[out angle=0, in angle=180,
    curve through = {(-4,4) (-2,3) (0,2) (2,3) (4,4)}] (6,2);
\draw(6,2) node[right] {$X$};
\end{tikzpicture}
\end{center}
\caption{The projection $Y\to\mathbb{G}_m$.}
\label{fig.Y}
\end{figure}
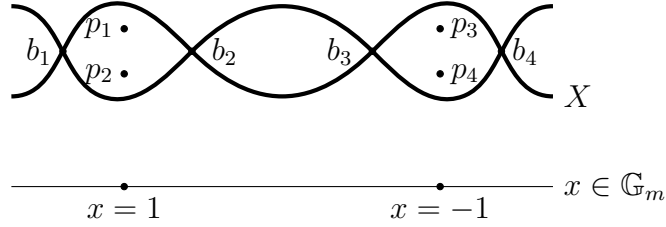


But now, formal Gaussian integration cannot be performed on the sum~\eqref{3dsum}
because the Hessian vanishes identically on $X\subset Y$. (Formal Gaussian
integration can be performed at the
4 isolated points $p_j$ and $p_1$ and $p_2$ contribute to the asymptotics, whereas
the other two do not; see~\cite[Eqn.(3)]{GW:periods}).

Let us explain this problem briefly. To perform
formal Gaussian integration, one replaces the quantum factorials from the denominator
of the $q$-series~\eqref{3dsum} by infinite Pochhammer symbols in the
numerator, and then converts the sum over a 3-dimensional lattice and apply Poisson
summation. Ignoring all exponentially small terms, there is a single term remaining
(from the origin of the 3-dimensional lattice), and this leads to a single
formal Gaussian integral. However, the problem is that the critical points of
the integrand have a positive dimensional component.

In this situation one would then expect the series associated to that component to
be periods coming from the middle dimensional homology of this component.
This is exactly what we found numerically in~\cite{GW:periods}. 
The vertical asymptotics of the 3d-index when $q=e^{2\pi i \tau}=e^{\hbar}$ and
$\tau \in i \BR_+ \to 0$ results in the series $\Psi(\hbar)$ discussed in
the introduction (see Equation~\eqref{Psival}), where
\be
\label{41asy}
\Psi(\hbar) \=
\frac{1}{\hbar}\int_{\mathcal{C}} g_{X/\BZ[1/30]}(e^\hbar) \= 
\frac{1}{\hbar}\int_{\mathcal{C}}
\big(\omega + \omega' \hbar^2 +O(\hbar^3) \big)
\= \frac{1}{\hbar} \kappa + \hbar \kappa' +O(\hbar^2)
\ee
where
the integrating form is given by
\be
\label{gxdef}  
g_{X/\BZ[1/30]}(q) \= f_{X/\mathbb{G}_m}(q)f_{X/\mathbb{G}_m}(q^{-1})\frac{dx}{x} \,,
\ee
the forms $\om$ and $\om'$ are given in~\eqref{om2}, 
the contour $\calC$ is given in Figure~\ref{fig.calC} in logarithmic coordinates
$u=\log(x)$, has branch points at
\be
  u\=\pm0.96242\cdots\,\quad u\=\pm 2.0944\cdots i\,,
\ee
and is decomposed into Henkel contours, and 
the periods are given numerically by
\be
\begin{aligned}
  \kappa
  & \= \int_{\mathcal{C}}\omega
  \= -\int_{\mathcal{C}_2}\omega
  \=1.400603042\cdots \,,
  & \int_{\mathcal{C}_1}\omega
  & \=-\kappa - 1.596242222\cdots i\,,
  \\
  \kappa'
  & \=  \int_{\mathcal{C}}\omega'
  \=-\int_{\mathcal{C}_2}\omega'
  \= 0.008005298\cdots \,,
  &
  \int_{\mathcal{C}_1}\omega'
  &\=-\kappa'+ 0.011257603\cdots i\,,
\end{aligned}
\ee
matching with the numerical asymptotics of \cite[Equs. (4),(6)]{GW:periods}
where $\pi^{-1}\kappa=0.4458257950\cdots$ and $4\pi\kappa'=0.1005975491\cdots$.

\begin{figure}[htpb!]
\begin{center}
\begin{tikzpicture}
\draw(-6,0)--(6,0);
\draw(0,-3)--(0,3);
\filldraw (-0.96242,0) circle(2pt);
\filldraw (0.96242,0) circle(2pt);
\filldraw (0,-2.0944) circle(2pt);
\filldraw (0,2.0944) circle(2pt);
\draw[red,snake it] (-0.96242,0)--(0.96242,0);
\draw[red,snake it] (0,-2.0944)--(0,2.0944);
\draw[red,snake it] (0,1.0472) to[out angle=0, in angle=180,
    curve through = {(3,1.4)}] (6,1.5);
\draw[red,snake it] (0,1.0472) to[out angle=180, in angle=0,
    curve through = {(-3,1.4)}] (-6,1.5);
\draw[red,snake it] (0,-1.0472) to[out angle=0, in angle=180,
    curve through = {(3,-1.4)}] (6,-1.5);
\draw[red,snake it] (0,-1.0472) to[out angle=180, in angle=0,
    curve through = {(-3,-1.4)}] (-6,-1.5);
    \draw[blue,->,thick] (6,2.0944+0.2)--(0,2.0944+0.2) arc (90:270:0.2)
    --(6,2.0944-0.2)
    node[right] {$\calC_1$};
    \draw[blue,->,thick] (6,0.2)--(0.96242,0.2) arc (90:270:0.2)--(6,-0.2)
    node[right] {$\calC_2$};
\draw[green,->,thick] (6,0)--(3,0) arc (0:180:3)--(-6,0) node[left] {$\calC$};
\end{tikzpicture}
\end{center}
\caption{Henkel contours on the curve $X/\BZ[1/30]$ over $u=\log(x)$-plane.
  The red curves cut of the
  principal branch of $\sqrt{\Delta(\exp(u))}$. When integrating
  this function one
  must change the sign of the integrand when crossing the red lines in order to
  perform the correct analytic continuation.}
\label{fig.calC}
\end{figure}
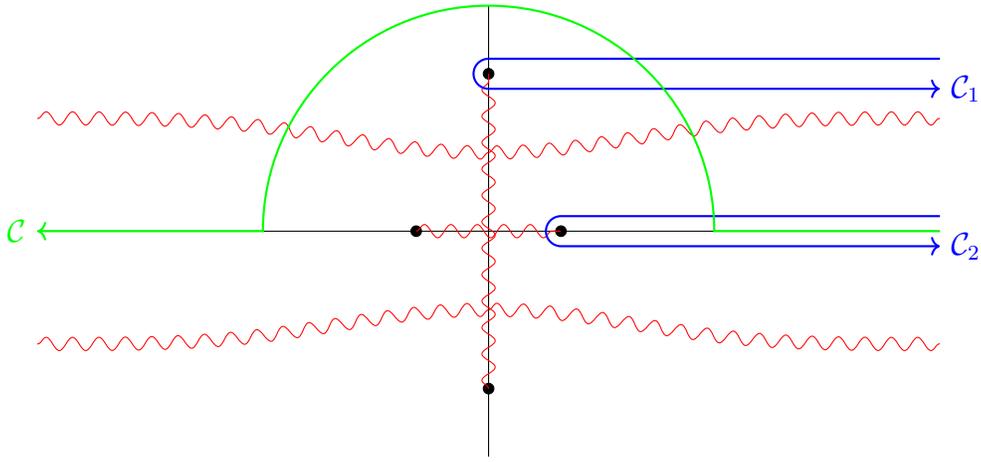

\begin{remark}
\label{rem.res}
The residues of $\omega$ and $\omega'$ at $b_1,\dots, b_4$ is zero
and they vanish at $\infty$.
\end{remark}

We now give more coefficients of ~\eqref{41asy} expanded at $q=1$. 
Using 150 computed values of $f_{X/\mathbb{G}_m}(q)$
and a standard reduction algorithm for computing algebraic de Rham cohomology,
explained for instance in ~\cite{Kedlaya:practice}, we find that
\be
\label{eq:gred}
\begin{aligned}
&g_{X/\BZ[1/30]}(q)
\=\\
\Big(&1
+\frac{227}{2\cdot 3^3\cdot 5^3\cdot 23}(q-1)^4
-\frac{227}{3^3\cdot 5^3\cdot 23}(q-1)^5
+\frac{11851373}{2^2\cdot 3^7\cdot 5^5\cdot 7\cdot 23}(q-1)^6\\
&-\frac{1125623}{2^2\cdot 3^6\cdot 5^5\cdot 7\cdot 23}(q-1)^7
+\frac{2005432343}{2\cdot 3^9\cdot 5^8\cdot 7\cdot 23}(q-1)^8
-\frac{16050504497}{2\cdot 3^9\cdot 5^8\cdot 7\cdot 23}(q-1)^9\\
&+\frac{2736919224300847}{2^3\cdot 3^{12}\cdot 5^{11}\cdot 7\cdot 11\cdot 23}
(q-1)^{10}+\cdots\Big)\omega + \\
\Big(&(q-1)^2-(q-1)^3
+\frac{104713}{2\cdot 3^3\cdot 5^3\cdot 23}(q-1)^4
-\frac{27088}{3^3\cdot 5^3\cdot 23}(q-1)^5\\
&+\frac{207057241}{2^2\cdot 3^7\cdot 5^5\cdot 7\cdot 23}(q-1)^6
-\frac{129006991}{2^2\cdot 3^6\cdot 5^5\cdot 7\cdot 23}(q-1)^7
+\frac{199984079731}{2\cdot 3^9\cdot 5^8\cdot 7\cdot 23}(q-1)^8\\
&+\frac{163800372701}{2\cdot 3^9\cdot 5^8\cdot 7\cdot 23}(q-1)^9
-\frac{34766652291936001}{2^3\cdot 3^{12}\cdot 5^{11}\cdot 7\cdot 11\cdot 23}
(q-1)^{10}+\cdots\Big) \omega'\in H^{1}_\dR(X)[\![q-1]\!]\\
\end{aligned}
\ee


Thus, $g_{X/\BZ[1/30]}(q)$ are linear combinations of $\omega$ and
$\omega'$ as predicted from de Rham cohomology. However, although the forms are
defined over $\BZ$ when considered on the affine chart, their reduction to forms on
the projective variety involves rational numbers. In the example above, 
aside from the spurious denominator $23$ which can be absorbed in rescaling $\omega$
and $\omega'$, these series have denominators that come from the bad primes $2,3,5$
but also have slowly increasing denominators from the other primes.
For example, up to $\mathrm{O}(q-1)^{101}$ in the above series has
denominator
$2^6\cdot 3^{147}\cdot 5^{125}\cdot\prod_{p=7\,\text{prime}}^{101}p$. 
The appearance of single power of a good prime in the denominator is
an accident. Indeed, the coefficient of $(q-1)^k$ for $k \geq 120$ is divisible by
$11^2$. Moreover, for the bad prime prime $2$ it appears that valuation of the
coefficient of $(q-1)^k$ is given by $-\big\lfloor\log_2(k-3)\big\rfloor$. The
valuation of $3$ and $5$ behave like $3k/2$ in the limit.

The logarithmic growth of the denominators is described in the following proposition.

\begin{proposition}
\label{prop.den2}  
There exists $M\in\BZ_{>0}$ such that the $p$-valuation
of the $k$-th coefficient appearing in Equation~\eqref{eq:gred} for $p\neq 2,3,5$
is at most $-\log_p M - \log_p k$.
\end{proposition}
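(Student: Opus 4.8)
The plan is to show that the $p$-adic denominators of the coefficients $c_k$ grow at most logarithmically, which is exactly the content of the proposition (equivalently, that the $p$-order of the denominator of $c_k$ is at most $\log_p M+\log_p k$). Writing $c_k\in\BQ\om\oplus\BQ\om'$ for the $(q-1)^k$-coefficient of the reduced series $g_{X/\BZ[1/30]}(q)$ of~\eqref{eq:gred}, I will bound $\mathrm{ord}_p(\mathrm{denom}\,c_k)$ by $\log_p M+\log_p k$ for a single $M\in\BZ_{>0}$ and all good primes $p\neq 2,3,5$. The starting point is the identification, via Theorem~\ref{thm.push} applied to the Habiro-ring element of Theorem~\ref{thm.2}, of $g_{X/\BZ[1/30]}(q)$ with the push-forward $\Sf_{X/\mathbb{G}_m}(q)\tfrac{dx}{x}$ into $\calH^1_\nv(X/B)$; its coefficients are the Griffiths reductions, to the basis $\{\om,\om'\}$ of $H^1_\dR(X)$, of the $(q-1)^k$-coefficients of $\Sf_{X/\mathbb{G}_m}$ viewed as rational $1$-forms on the elliptic curve $X$.

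The mechanism producing the logarithmic bound is the one already isolated in Section~\ref{sub.conv}. Let $P$ be the order-$2$ Picard--Fuchs operator of $X$ (so here $n+1=2$) and let $D_1^{\rem}(\th,\lambda,q-1)$ be the remainder of the $q$-difference operator attached to $g_{X/\BZ[1/30]}(q)$ upon right division by $P$, as in~\eqref{DDrem} and~\eqref{DDrem2}. Then $c_k$ is obtained by letting the coefficient of $(q-1)^k$ in $D_1^{\rem}$ act on the Picard--Fuchs solution and reading off the resulting class in $H^1_\dR(X)$. The key analytic input is the exact analogue of Theorem~\ref{thm:bounds.on.denom} for this geometric family: $\log(q)^{2}D_1^{\rem}$ is $p$-adically convergent on $|q-1|_p<1$. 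I would prove this verbatim as there: express $\log(q)^{2}(D_1^{\rem,0},D_1^{\rem,1})U(\lambda)=\log(q)^{2}\Phi(\lambda,q)$ in terms of the fundamental solution matrix $U(\lambda)$ of $P$, invoke Lemma~\ref{lem:anal} to see the right-hand side is holomorphic for $|q-1|_p<1$, and multiply by $U(\lambda)^{-1}$. Since $\tfrac{q-1}{\log q}$ is holomorphic and $\tfrac{\log q}{q-1}$ has only logarithmically-growing denominators, dividing the convergent series $\log(q)^{2}D_1^{\rem}$ by $(\log q)^{2}$ shows that the coefficient of $(q-1)^k$ in $D_1^{\rem}$, hence $c_k$, has denominator of $p$-adic order at most $\log_p M+\log_p k$ once $M$ absorbs the bounded contributions of $U(\lambda)^{-1}$ and of the basis change to $\{\om,\om'\}$.

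The separation of the bad primes $2,3,5$ is forced at two points: the discriminant $\Delta$ and the passage from the affine forms to the projective de Rham basis $\{\om,\om'\}$ are only $p$-integral after inverting $2,3,5$, and the entries of $U(\lambda)^{-1}$ acquire denominators only at those primes. For $p\neq2,3,5$ these factors are units and do not disturb the estimate, which is why the proposition is stated for good $p$ only.

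The main obstacle I anticipate is sharpening the constant in front of $\log_p k$. The division by $(\log q)^{2}$ yields, a priori, a bound whose coefficient equals $\mathrm{ord}\,P=2$ rather than the coefficient $1$ recorded in the proposition and visible in the data (each good prime entering the denominator to first power over the computed range). To obtain the sharp coefficient $1$ for good $p$ I would invoke the Frobenius-equivariant refinement of the Christol--Dwork--Robba bounds for a $p$-adic differential module coming from geometry, in the form of Kedlaya~\cite[Thm.18.2.1, Thm.18.5.1]{Kedlaya:book}: the crystalline Frobenius on $H^1_\dR(X/B)$ for good $p$ improves the generic $(n+1)$-fold logarithmic loss to a single-power bound, exactly as the comparison with the holomorphic function $\tfrac{\log q}{q-\z_m}$ in the introduction predicts. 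Absorbing the residual bounded loss into $M$ then gives $\mathrm{ord}_p(\mathrm{denom}\,c_k)\le\log_p M+\log_p k$ for all $p\neq2,3,5$.
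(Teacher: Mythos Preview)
Your proposal has a fundamental mismatch with the setting. In this proposition $X$ is a \emph{fixed} elliptic curve over $\BZ[1/30]$; there is no parameter $\lambda$, no Picard--Fuchs operator $P$, no fundamental solution matrix $U(\lambda)$, and hence no operator $D_1^{\rem}(\th,\lambda,q-1)$ to divide by $P$. The machinery of Section~\ref{sub.conv} that you invoke is designed for the hypergeometric motives of Section~\ref{sec.method1}, where the starting $q$-series has factorially-growing denominators and the division by $P$ is what tames them. Here the situation is the opposite: the input $\Sf_{X/\mathbb{G}_m}(q)$ is already an element of the Habiro ring $\calH_{R/\BZ[x^{\pm1}]}$ (Theorem~\ref{thm.2}), so before reduction the $(q-1)^k$-coefficients are $p$-integral forms; the denominators you are trying to bound arise \emph{only} from the Griffiths reduction to the basis $\{\om,\om'\}$. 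Your attempt to salvage the coefficient $1$ via Christol--Dwork--Robba is accordingly misplaced, since those bounds concern fundamental solutions of $p$-adic differential equations, not reduction of integral forms on a fixed curve.

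The paper's proof is direct and elementary. From the residue formula~\eqref{fsres} one sees that the $(q-1)^k$-coefficient of $\Sf_{X/\mathbb{G}_m}$ is a rational function in $x$ whose numerator and denominator have degree growing \emph{linearly} in $k$ (this comes from expanding the geometric series and its derivatives up to order $2k$). Then one uses the explicit Griffiths reduction on the affine curve: reducing a form $p(x)\delta^{-j}\tfrac{dx}{x}$ with $\deg p=n$ introduces integer denominators, away from $2,3,5$, of size at most $j+n$. Since both $j$ and $n$ are bounded by a linear function of $k$, the total denominator is at most $Mk$ for some fixed $M$, giving the bound $-\log_p M-\log_p k$ with coefficient exactly $1$ in front of $\log_p k$, with no appeal to Frobenius or $p$-adic differential equations.
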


\begin{proof}
From the residue formula~\eqref{fsres}, we find that the coefficient of $(q-1)^{k}$
is given by sums of the form
\be
  A(w,z)\sum_{\ell=0}^{\infty}\ell^{m}\Big(\frac{wx}{(1-w)(1-x^2w)}\Big)^{\ell}
\ee
where $A(w,z)$ is a rational function in $w,z$ and $m\leq 2k$.
The sum is written in terms of derivatives of the geometric series and the degree
of its numerator and denominator clearly grow linearly in $k$.
The numerator and denominator of the rational function $A(w,z)$ also have degree
in $x$ growing at most linearly in $k$. This can be seen from the formula
\be
\frac{qwx}{(1-qw)(1-qx^2w)} \=
\frac{wx+(q-1)wx}{(1-w)(1-x^2w)}\sum_{\ell=0}^{\infty}
\Big(\frac{(1-qw)(1-x^2qw)-(1-w)(1-x^2w)}{(1-w)(1-x^2w)}\Big)^{\ell}\,.
\ee
In the reduction of the forms in $H^{1}(X)\otimes\BQ$ for a polynomial of degree
$n$ divided by $\delta^j$, we add an exact form with denominators away from $2,3,5,$
less than or equal to $j+n$.
Therefore, given the degree grows linearly in $k$ say by a constant $M/2$. We see
that the coefficient of $(q-1)^{k}$ can be expressed as an integral polynomial of
degree at most $Mk/2$ divided by an integral polynomial of degree at most $Mk/2$.
Therefore, we see that the denominator of the reduced form away from the primes
$2,3,5$ is bounded by $Mk$.
\end{proof}

The above proposition implies that the series in $q-1$ can be re-expanded around
other roots of unity. Going back to our example~\eqref{eq:gred}, it follows 
that we can re-expand at other roots of unity and check the
associated gluing. We will do this for the $7$th root of unity $\z_7$.

Firstly, the Frobenius endomorphism is given by
\be
\begin{aligned}
\varphi_p(\omega)
&\=
\frac{1}{\delta^p}\sqrt{\frac{\Delta(x)^p}{\Delta(x^p)}}\frac{dx}{x}\,,\\
\varphi_p(\omega')
&\=
\frac{(x^{3p} - x^{2p} - 2x^p + 5 - 2x^{-p} - x^{-2p} + x^{-3p})}{\delta^{7p}}
\sqrt{\frac{\Delta(x)^{7p}}{\Delta(x^p)^7}}\frac{dx}{x}\,,\\
\end{aligned}
\ee
and computing, we find that
\be
\begin{small}
\begin{aligned}
  \varphi_7(\omega)
  &\=
  (2 + 2\!\cdot\!7 + 6\!\cdot\!7^4 + \mathrm{O}(7^5))\omega
  + (3 + 7 + 3\!\cdot\!7^2 + \mathrm{O}(7^5))\omega'\,,\\
  \varphi_7(\omega')
  &\=
  (2\!\cdot\! 7^{-1} + 2\!\cdot\!7 + 4\!\cdot\!7^2 + 5\!\cdot\!7^3 + 6\!\cdot\!7^4 + \mathrm{O}(7^5))
  \omega + (5 + 4\!\cdot\!7 + 6\!\cdot\!7^2 + 6\!\cdot\!7^3 + \mathrm{O}(7^5))\omega'\,.
\end{aligned}
\end{small}
\ee
Therefore, in the $\BQ_7$-basis given by $\om,\om'$ we find that the
Frobenius automorphisms is represented by the matrix
\be
\begin{pmatrix}
  2 + 2\cdot7 + 6\cdot7^4 + \mathrm{O}(7^5) & 3 + 7 + 3\cdot7^2 + \mathrm{O}(7^5)\\
  2\cdot 7^{-1} + 2\cdot7 + 4\cdot7^2 + 5\cdot7^3 + 6\cdot7^4 + \mathrm{O}(7^5) &
  5 + 4\cdot7 + 6\cdot7^2 + 6\cdot7^3 + \mathrm{O}(7^5)
\end{pmatrix}\,.
\ee
This matrix has trace $0$ and determinant $7^{-1}$.

Secondly, computing the asymptotics of
the 3d-index~\eqref{3dsum}  at $\z_7$, we find leading asymptotics with the period
\be
  (9+8\cos(2\pi/7)+4\cos(4\pi/7))\kappa\,.
\ee
This along with the asymptotics of the series~\eqref{3dsum}, which sees asymptotics
with the contour $\calC_1-\calC_2$, we see that this lifts to an identity of forms
\be
  g_{X/\BZ[1/30]}(\z_7)
  \=
  (9+8\cos(2\pi/7)+4\cos(4\pi/7))\omega
  \=
  (-2\z_7^5 - 4\z_7^4 - 4\z_7^3 - 2\z_7^2 + 5)\omega\,.
  \ee

Finally, we can compare the $p$-adic reexpansion of the series
$g_{X/\BZ[1/30]}(q-1)$ at $q=\z_7$ with the value $\varphi_7(g_{X/\BZ[1/30]}(\z_7))$.
We find that
\be
\begin{aligned}
\varphi_7(g_{X/\BZ[1/30]}(\z_7))
\=&
\Big((3 + 2\cdot7 + 6\cdot7^2 + 6\cdot7^3 + 7^4 + \mathrm{O}(7^5))\z_7^5\\
&+ (6 + 4\cdot7 + 5\cdot7^2 + 6\cdot7^3 + 3\cdot7^4 + \mathrm{O}(7^5))\z_7^4\\
&+ (6 + 4\cdot7 + 5\cdot7^2 + 6\cdot7^3 + 3\cdot7^4 + \mathrm{O}(7^5))\z_7^3\\
&+ (3 + 2\cdot7 + 6\cdot7^2 + 6\cdot7^3 + 7^4 + \mathrm{O}(7^5))\z_7^2\\
&+ (3 + 4\cdot7 + 7^2 + 2\cdot7^4 + \mathrm{O}(7^5))\Big)\omega\\
&+ \Big((1 + 4\cdot7 + 6\cdot7^3 + 6\cdot7^4 + \mathrm{O}(7^5))\z_7^5\\
&+ (2 + 7 + 7^2 + 5\cdot7^3 + 6\cdot7^4 + \mathrm{O}(7^5))\z_7^4\\
&+ (2 + 7 + 7^2 + 5\cdot7^3 + 6\cdot7^4 + \mathrm{O}(7^5))\z_7^3\\
&+ (1 + 4\cdot7 + 6\cdot7^3 + 6\cdot7^4 + \mathrm{O}(7^5))\z_7^2\\
&+ (1 + 2\cdot7^2 + 2\cdot7^3 + \mathrm{O}(7^5)) \Big) \omega'\\
\=&g_{X,1}(\z_7-1)\,.
\end{aligned}
\ee
This agrees with the gluing, which should be deducible from the fact that
$g_{X/\BZ[1/30]}(q)\in\calH(X/\BZ[1/30])$.

The above computations illustrate the following theorem.

\begin{theorem}
\label{thm.3}
We have:
\be
\label{Yhab}
g_{X/\BZ[1/30]}(q) \in \calH_\nv^1(X/\BZ[1/30]) \,.
\ee
\end{theorem}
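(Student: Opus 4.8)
The plan is to deduce the statement from Theorem~\ref{thm.2} together with the push-forward theorem, Theorem~\ref{thm.push}, in the same spirit as the alternative arguments given for the Legendre and quintic examples. By~\eqref{gxdef} and~\eqref{fsdef},
\[
g_{X/\BZ[1/30]}(q) \;=\; f_{X/\mathbb{G}_m}(q)\,f_{X/\mathbb{G}_m}(q^{-1})\,\tfrac{dx}{x} \;=\; \Sf_{X/\mathbb{G}_m}(q)\,\tfrac{dx}{x},
\]
so the claim is that multiplying the symmetrised block $\Sf_{X/\mathbb{G}_m}(q)$ by the $1$-form $\om=\tfrac{dx}{x}$ on $X$ and reducing to de Rham cohomology yields a class in the Habiro cohomology. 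Theorem~\ref{thm.2} supplies the input: $\Sf_{X/\mathbb{G}_m}(q)\in\calH_{R/\BZ[x^{\pm1}]}$ for the \'etale $\BZ[x^{\pm1}]$-algebra $R$ of~\eqref{41et}. Its defining $(q-\z_m)$-series have coefficients in the $\BZ$-torsion-free ring $R_m$, hence are $p$-adically bounded at every prime ideal and in particular have positive radius of convergence there, so $\Sf_{X/\mathbb{G}_m}(q)$ in fact lies in $\calH^{\mathrm{an}}_{R/\BZ[x^{\pm1}]}$.

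Next I would check the hypotheses of Theorem~\ref{thm.push} in this situation. The \'etale map is $\BZ[x^{\pm1}]\to R$, so there is a single pushed-forward variable ($N=1$, $x_1=x$) and no remaining parameters; the push-forward of $X=\Spec(R)\to\mathbb{G}_{m,x}$ over $\BZ$ has base $B=\Spec(\BZ[\tfrac{1}{\Delta}])$ for a constant $\Delta\in\BZ$, and $\Delta=30$ since, as noted after~\eqref{delta2}, $X$ is a smooth affine elliptic curve over $\Spec(\BZ[1/30])$. The chosen form $\om=\tfrac{dx}{x}\in\Om^1(X/B)$ is $\vphi_p$-invariant for all $p$ because $\vphi_p(\tfrac{dx}{x})=x^{p-1}\tfrac{dx}{x^p}=\tfrac{dx}{x}$ by~\eqref{frobs2}. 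Theorem~\ref{thm.push} then gives a push-forward map $\calH^{\mathrm{an}}_{R/\BZ[x^{\pm1}]}\to\calH^1_\nv(X/\BZ[1/30])$, $f\mapsto[f\om]$; applying it to $f=\Sf_{X/\mathbb{G}_m}(q)$ produces $g_{X/\BZ[1/30]}(q)=[\Sf_{X/\mathbb{G}_m}(q)\,\tfrac{dx}{x}]\in\calH^1_\nv(X/\BZ[1/30])$, which is the assertion of the theorem.

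The substance hidden inside the appeal to Theorem~\ref{thm.push}, and the step I expect to be the main obstacle, is the control of denominators under Griffiths reduction. For each $m$ coprime to $30$, the $(q-\z_m)$-expansion of $\Sf_{X/\mathbb{G}_m}(q)\,\tfrac{dx}{x}$ is a power series of meromorphic $1$-forms on $X$ with poles only along $\delta=0$ and of pole order growing at most linearly in the $(q-\z_m)$-degree, as one sees from the residue formula~\eqref{fsres} and, concretely, from the reduced expansion~\eqref{eq:gred}. Reducing these forms to the basis $\om,\om'$ of~\eqref{om2} of $H^1_\dR(X/\BZ[1/30])$ by subtracting exact forms degrades $p$-integrality for $p\nmid 30$ by at most a $\log_p$ of the $(q-\z_m)$-degree, not by a power of $p$; this is the content of Proposition~\ref{prop.den2}. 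Hence the reduced $(q-\z_m)$-series are still convergent on $|q-\z_m|_p<1$, can be re-expanded at $\z_{pm}$, and the gluing identity of Definition~\ref{def.hab} on cohomology classes is inherited from the gluing in $\calH_{R/\BZ[x^{\pm1}]}$ furnished by Theorem~\ref{thm.2}: the Frobenius on $H^1_\dR(X/\BZ[1/30])$ is induced by the same Frobenius lift ($x\mapsto x^p$, $y\mapsto y^p$) that governs that gluing, and since $\vphi_p$ fixes $\tfrac{dx}{x}$ we have $\vphi_p[s\,\tfrac{dx}{x}]=[\vphi_p(s)\,\tfrac{dx}{x}]$ for the $\z_{pm}$-component $s$ of $\Sf_{X/\mathbb{G}_m}(q)$, so the identity follows termwise. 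The numerical checks in this section — the denominator of the coefficient of $(q-1)^{101}$ in~\eqref{eq:gred}, and the explicit agreement of the $\z_7$-re-expansion with $\vphi_7(g_{X/\BZ[1/30]}(\z_7))$ — are exactly instances of these two points.
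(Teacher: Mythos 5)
Your proposal is correct and follows essentially the same route as the paper: the paper's proof is exactly the two-line combination of Theorem~\ref{thm.2} with the push-forward Theorem~\ref{thm.push} applied to $\Sf_{X/\mathbb{G}_m}(q)\,\tfrac{dx}{x}$. Your additional verifications (that Habiro-ring elements lie in the analytic subring since their coefficients are in $R_m$, that $\tfrac{dx}{x}$ is $\vphi_p$-invariant, and the Griffiths-reduction denominator control as in Proposition~\ref{prop.den2}) are exactly the hypotheses the paper leaves implicit when invoking Theorem~\ref{thm.push}.
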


\begin{proof}
Theorem~\ref{thm.2} implies that $\Sf_{X/\mathbb{G}_m}(q) \in \calH_{R/\BZ[x^{\pm1}]}$.
Apply the push-forward Theorem~\ref{thm.push} to the above element times
$\tfrac{dx}{x}$, in other words to $g_{X/\BZ[1/30]}(q)$, to conclude
Equation~\eqref{Yhab}.
\end{proof}

\subsection{Habiro cohomology as a module over the Habiro ring}
\label{sub.41rels}

In this section we discuss how the explicit elements $g_{X/\BZ[1/30]}(q)$
in the Habiro cohomology $\calH_\nv^1(X/\BZ[1/30])$ given in Theorem~\ref{thm.3}
generate a module of finite rank over the Habiro ring $\calH_{\BZ[1/30]}$. 

Indeed, consider the elements
\be
  g_{m}(q)
  \=
  x^m \Sf_{X/\mathbb{G}_m}(q) \frac{dx}{x}\,.
\ee
These live in a strictly larger space of forms which have poles and residues at
$x=0,\infty$ for various $m\in\BZ$. Taking suitable linear combinations, we give
elements in $\calH_\nv^1(X/\BZ[1/30])$. In particular, we obtain
\be
\label{arel}
\begin{aligned}
  a_0(q)&\=g_0(q)\\
  &\=[\om]+(q-1)^2[\om']+\cdots\,,\\
  a_1(q)&\=g_1(q)-g_2(q)\\
  &\=\Big(-\frac{9}{2^2\cdot 23}+\frac{9}{2^2\cdot 23}(q-1)^2\Big)[\om]+
  \Big(\frac{10125}{2^2\cdot 23}+\frac{639}{2^2\cdot 23}(q-1)^2\Big)[\om']+\cdots\,,\\
  a_2(q)&\=g_4(q)-2g_3(q)+(1+q^{-1}(q-1)^2)g_1(q)\\
  &\=\Big(-\frac{89}{2^3\cdot 23}+\frac{35}{2^3\cdot 23}(q-1)^2\Big)[\om]+
  \Big(-\frac{3375}{2^3\cdot 23}-\frac{4139}{2^3\cdot 23}(q-1)^2\Big)[\om']+\cdots\,.
\end{aligned}
\ee
Then, we find that three elements $a_j(q)$ for $j=0,1,2$ satisfy a relation
of the form
\be
a_2(q) \=
\Big(-\frac{1}{2}+\frac{7}{40}(q-1)^2+\cdots\Big)a_0(q) +
\Big(-\frac{1}{6}-\frac{71}{360}(q-1)^2+\cdots\Big)a_1(q)\,.
\ee
It is remarkable that although 
the coefficients of the series $a_0(q)$ and $a_1(q)$ live in $\BQ[\![q-1]\!]$,
the coefficients of the above relation live in $\BZ[1/30][\![q-1]\!]$.
In fact we find that, to order $\mathrm{O}(q-1)^{150}$, their denominator is
$2^{151}3^{224}5^{187}$. This gives numerical supporting evidence
that these coefficients are elements of the Habiro ring $\calH_{\BZ[1/30]}$.

\subsection{A family of elliptic curves associated to the $4_1$ knot}
\label{sub.41lambda}

In this section, we are no longer motivated from the 3D-index, but instead
of the question of constructing elements in the Habiro cohomology of a family
of elliptic curves associated to the $4_1$ knot.

To achieve this, we use a variable $\lambda$ (as is traditionally called in
the classical Legendre family of elliptic curves) and consider the $\lambda$-deformed
series $h_n(\lambda,q)$ given by

\be
\label{hnladef}
h_{n}(\lambda,q) \= \sum_{k\in\BZ}
(-1)^{k+n}\frac{q^{k(k+1)/2+nk+n(2n+1)/2}}{(q;q)_{k}(q;q)_{k+2n}} \lambda^{n+k},
\qquad (n \in \BZ) \,.
\ee
With the choice of variables $(x,y)=(q^n,q^k)$, 
the corresponding version of the Nahm equation~\eqref{nahmdesc} now becomes
\be
\label{nahmla}
\lambda xy+(1-x^2y)(1-y)\=0 
\ee
and defines a family $E_\lambda$ of affine elliptic curves parametrised by $\lambda$ 
as well as a smooth map $E/B_x$ with $E=\Spec(\mathsf{R})$ and
$B_x=\Spec(\BZ[x^{\pm 1},\lambda^{\pm 1}])$
determined by the \'etale map
\be
\label{41etla}
\BZ[x^{\pm 1},\lambda^{\pm 1}]\to \mathsf{R} \=
\BZ[x^{\pm 1},\lambda^{\pm 1},\tfrac{1}{\delta}][y^{\pm 1}]/(\lambda xy+(1-x^2y)(1-y))
\ee
with
\be
\label{delta2la}
\delta^2(\lambda,x)\=x^{2} - 2 \lambda x +(\lambda^2-2) - 2 \lambda x^{-1} + x^{-2}\,.
\ee
is a $\lambda$-deformation of~\eqref{delta2}.

As before, from~\eqref{hnladef} one obtains $f_{E/B_x}(q)$
and its symmetrisation 
\be
\label{fsladef}
\Sf_{E/B_x}(q) = f_{E/B_x}(q)
f_{E/B_x}(q^{-1})
\ee
which may also be given by the residue formula
\be
\label{fslares}
\Sf_{E/B_x}(q) \= \sum_{z^m=y}
\underset{w=z}{\Res}
\sum_{k=0}^{\infty}
(-1)^{k}\frac{q^{k(k+1)/2}w^kx^k}{(qw;q)_{k}(qwx^2;q)_{k}} \lambda^{k-1}
\frac{dw}{w}
\ee
where $y$ satisfies~\eqref{nahmla}.

The proof of Theorem~\ref{thm.2} implies the following.

\begin{theorem}
\label{thm.2la}  
$\Sf_{E/B_x}(q)$ is an element of the Habiro ring
$\calH_{\mathsf{R}/\BZ[x^{\pm 1},\lambda]}$.
\end{theorem}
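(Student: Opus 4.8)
The statement follows by repeating the proof of Theorem~\ref{thm.2} with the extra variable $\lambda$ carried along as a bystander. Of the two arguments given there (the residue-formula argument in the spirit of~\cite[Sec.4.1]{GSWZ}, and the one via Theorems~\ref{thm.ct2} and~\ref{thm.push}), I would use the second, starting from the residue formula~\eqref{fslares}. First I would observe that the inner $q$-hypergeometric series in~\eqref{fslares},
\[
\sum_{k\ge 0}(-1)^k\frac{q^{k(k+1)/2}w^kx^k}{(qw;q)_k(qwx^2;q)_k}\,\lambda^{k-1},
\]
is $\lambda^{-1}$ times a proper $q$-hypergeometric series in the variables $(w,x,\lambda)$, with $\lambda$ entering only through the monomial $\lambda^k$. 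Running the argument of the proof of Theorem~\ref{thm.ct2} — expansion of the $q$-factorials near roots of unity via Lemmas~\ref{lem.q1} and~\ref{lem:qpzm}, the $m$-congruence decomposition, identification of the $q\to1$ specialization with the rational function $\lambda^{-1}\big(1+\tfrac{\lambda wx}{(1-w)(1-wx^2)}\big)^{-1}$, and gluing via the underlying linear $q$-difference system together with uniqueness of its power-series solution $1+O(\cdot)$ — then shows that this series lies in the Habiro ring $\calH_{S/\BZ[w^{\pm1},x^{\pm1},\lambda^{\pm1}]}$, where $S:=\BZ[w^{\pm1},x^{\pm1},\lambda^{\pm1}]\big[(1+\tfrac{\lambda wx}{(1-w)(1-wx^2)})^{-1}\big]$.

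The second step is the push-forward. Taking $\Res_{w=z}$ over the sheets $z^m=y$ exactly as in~\eqref{fslares} is a push-forward in the $w$-direction, so Theorem~\ref{thm.push} applied with the $\varphi_p$-invariant form $\om=\tfrac{dw}{w}$ converts the element of $\calH_{S/\BZ[w^{\pm1},x^{\pm1},\lambda^{\pm1}]}$ into an element of the Habiro ring of the \'etale map~\eqref{41etla}, i.e. of $\calH_{\mathsf{R}/\BZ[x^{\pm1},\lambda^{\pm1}]}$; this uses that the $q=1$ specialization (and, after the congruence decomposition, the $q=\z_m$ specialization) of the inner series has poles in $w$ precisely on the locus $\lambda xy+(1-x^2y)(1-y)=0$ cut out by~\eqref{nahmla}, and that inverting the discriminant $\delta(\lambda,x)$ of~\eqref{delta2la} makes $E/B_x$ smooth so that the Griffiths reduction used in Theorem~\ref{thm.push} behaves as before. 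Since $\mathsf{R}$ already inverts $\lambda$ and the Frobenius/Adams structures on $\BZ[x^{\pm1},\lambda]$ and $\BZ[x^{\pm1},\lambda^{\pm1}]$ coincide, $\calH_{\mathsf{R}/\BZ[x^{\pm1},\lambda^{\pm1}]}=\calH_{\mathsf{R}/\BZ[x^{\pm1},\lambda]}$, which is the claim. The $p$-adic gluing across $m\mapsto pm$ follows, as in the proof of Theorem~\ref{thm.2}, from $q$-holonomicity of $\Sf_{E/B_x}(q)$: the $\lambda$-deformed $q$-difference system has a unique solution $1+O(\cdot)$, so the two re-expansions must agree after the Frobenius twist $\lambda^{1/pm}\mapsto\lambda^{1/m}$.

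The conceptual content is identical to that of Section~\ref{sec.41}; the work is bookkeeping. The step I expect to be the main obstacle is checking that the extra variable does not enlarge the coefficient ring — concretely, that $S$ above is still \'etale over $\BZ[w^{\pm1},x^{\pm1},\lambda^{\pm1}]$, that $\delta(\lambda,x)$ from~\eqref{delta2la} (rather than $\delta(x)$ from~\eqref{delta2}) is exactly the quantity one must invert, and that the residue of the $q$-series lands in $\mathsf{R}$ and not in a proper finite extension of it. Once these geometric identifications are pinned down, the denominator estimates (logarithmic growth of $p$-valuations, hence convergence for $|q-\z_m|_p<1$) go through verbatim, and the two re-expansions of $\Sf_{E/B_x}(q)$ glue.
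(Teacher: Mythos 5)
Your proposal is correct and is essentially the paper's own argument: the paper disposes of Theorem~\ref{thm.2la} with the single remark that ``the proof of Theorem~\ref{thm.2} implies'' it, and what you write out is exactly that proof with $\lambda$ carried along, following the same route the paper offers there (the $\lambda$-deformed residue formula~\eqref{fslares}, the argument of Theorem~\ref{thm.ct2} for the inner series over $S=\BZ[w^{\pm1},x^{\pm1},\lambda^{\pm1}][(1+\tfrac{\lambda wx}{(1-w)(1-wx^2)})^{-1}]$, the push-forward of Theorem~\ref{thm.push} with $\om=\tfrac{dw}{w}$, and gluing via uniqueness of the power-series solution of the $q$-difference system). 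Your closing remarks on $\delta(\lambda,x)$, on the residue landing in $\mathsf{R}$, and on identifying the Habiro rings over $\BZ[x^{\pm1},\lambda]$ and $\BZ[x^{\pm1},\lambda^{\pm1}]$ are exactly the bookkeeping the paper leaves implicit.
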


For example, we have the following $\lambda$-deformation of~\eqref{fsymval}
\be
\label{fsymlaval}
\begin{aligned}
  \Sf_{E/B_x}(q) \= & \frac{1}{\delta}\big(1+
  \frac{(q-1)^2}{\delta^{6}}(\lambda x^3 - \lambda^2 x^2 - \lambda(1+\lambda^2) x
  + \lambda^2(4+\lambda^2) \\ &  - \lambda(1+\lambda^2) x^{-1} - \lambda^2 x^{-2}
  + \lambda x^{-3})
+ \dots \big)
\end{aligned}
\ee
We have computed $15$ values of the above series. 


Next, we discuss the summed symmetrisation of $h_n(\lambda,q)$, which is the
series given by
\be
\label{3dlasum}
  \sum_{n\in\BZ}
  h_{n}(\lambda,q)h_{n}(\lambda,q^{-1})
  \=
  \sum_{n,k,\ell\in\BZ}
  (-1)^{k+\ell}\frac{q^{k(k+1)/2+\ell(\ell+1)/2+nk+n\ell+n(2n+1)}}{
    (q;q)_{k}(q;q)_{k+2n}(q;q)_{\ell}(q;q)_{\ell+2n}} \lambda^{2n+k+\ell}\,.
\ee
The Nahm equations now become
\be
\label{xyznahmla}
  \lambda^2\frac{x^4yz}{(1-x^2y)^2(1-x^2z)^2}\=1\,, \quad 
  -\lambda \frac{xy}{(1-x^2y)(1-y)}\=1\,,\quad
  -\lambda \frac{xz}{(1-x^2z)(1-z)}\=1\,.
\ee
  
As before, summation over $n$ geometrically corresponds to the push-forward 
along $x$ to a family 
$E/B$ of elliptic curves parametrised by $\lambda$
over $B=\mathrm{Spec}(\BZ[\lambda, \tfrac{1}{\lambda(\lambda^2-16)}])$.

Letting
\be
\label{gxladef}  
g_{E/B}(q) \= \Sf_{E/B}(q) \frac{dx}{x} \,,
\ee
and using the $p$-Frobenius $\varphi_p(\lambda)=\lambda^p$, 
we obtain the following theorem whose proof follows verbatim from the proof
of Theorem~\ref{thm.3}.

\begin{theorem}
\label{thm.3la}
We have:
\be
\label{Yhabla}
g_{E/B}(q) \in \calH_\nv^1(E/B) \,.
\ee
\end{theorem}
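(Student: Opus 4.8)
The plan is to run the proof of Theorem~\ref{thm.3} with the extra parameter $\lambda$ carried along, so that the single elliptic curve there is replaced by the family $E/B$. I would begin with Theorem~\ref{thm.2la}, which already gives $\Sf_{E/B_x}(q)\in\calH_{\mathsf{R}/\BZ[x^{\pm 1},\lambda]}$, with $\mathsf{R}$ the \'etale $\BZ[x^{\pm 1},\lambda]$-algebra of~\eqref{41etla}. Since the residue presentation~\eqref{fslares} writes $\Sf_{E/B_x}(q)$ out of a proper $q$-hypergeometric series, for each $m$ coprime to the bad primes the coefficients of its $(q-\z_m)$-expansion are algebraic functions on $\mathsf{R}_m$ with strictly positive $p$-adic radius of convergence at every completion; hence $\Sf_{E/B_x}(q)$ lies in the analytic subring $\calH_{\mathsf{R}/\BZ[x^{\pm 1},\lambda]}^{\mathrm{an}}$ introduced before Theorem~\ref{thm.push}, which is exactly the input that theorem requires.

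Next I would identify the geometric target. Set $X=\Spec(\mathsf{R})$ and form the push-forward $X/B$ of $\pi\colon X\to\Spec(\BZ[x^{\pm 1},\lambda])$ along the single variable $x$, with $B=\Spec(\BZ[\lambda,(\lambda(\lambda^2-16))^{-1}])$ (possibly enlarging the discriminant by finitely many primes so that cohomology is torsion-free). One checks, exactly as in the non-$\lambda$ case, that this push-forward is the smooth family $E/B$ of elliptic curves announced after~\eqref{xyznahmla}: the fiber of $\mathsf{R}$ over a point of $B$ is the genus-$1$ affine plane curve $\lambda xy+(1-x^2y)(1-y)=0$, and the $\lambda$-deformed discriminant $\delta^2(\lambda,x)$ of~\eqref{delta2la} (a deformation of~\eqref{delta2}) forces the fiber to become singular precisely over the vanishing locus of $\lambda(\lambda^2-16)$, so that after inverting that locus $H^1_\dR(E/B)$ carries a Frobenius for all but finitely many $p$ compatible with $\varphi_p(\lambda)=\lambda^p$. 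The form $\om=\pi^*(dx/x)$ is invariant under the $p$-Frobenius $\varphi_p(x)=x^p$, $\varphi_p(\lambda)=\lambda^p$ of~\eqref{frobs2}, so it is an admissible choice in Theorem~\ref{thm.push}.

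With these ingredients, Theorem~\ref{thm.push} supplies a push-forward map $\calH_{\mathsf{R}/\BZ[x^{\pm 1},\lambda]}^{\mathrm{an}}\to\calH_\nv^1(E/B)$, $f\mapsto[f\,dx/x]$, and feeding in $f=\Sf_{E/B_x}(q)$ realizes $g_{E/B}(q)$ of~\eqref{gxladef} as the class $[\Sf_{E/B_x}(q)\,dx/x]\in\calH_\nv^1(E/B)$, which is the assertion~\eqref{Yhabla}. An alternative, entirely parallel to the second proof of Theorem~\ref{thm.2}, would first place the inner sum $\sum_{k\ge0}(-1)^k q^{k(k+1)/2}w^k x^k\,(qw;q)_k^{-1}(qwx^2;q)_k^{-1}\lambda^{k-1}$ in the Habiro ring of an explicit \'etale map over $\BZ[w^{\pm1},x^{\pm1},\lambda^{\pm1}]$ by Theorem~\ref{thm.ct2}, push it forward along $w$ via $dw/w$ to obtain~\eqref{fslares}, and then push forward along $x$ via $dx/x$; this bypasses formal Gaussian integration entirely.

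The main obstacle, exactly as in Theorem~\ref{thm.push}, is the convergence bookkeeping: one must show that the Griffiths reduction of $\Sf_{E/B_x}(q)\,dx/x$ to a class in $H^1_\dR(E/B)[\z_m][\![q-\z_m]\!]$ worsens the (already linear) denominators of the analytic input by at most a logarithmic amount, so the reduced $(q-\z_m)$-series still converge on $|q-\z_m|_p<1$ and therefore glue $p$-adically after the Frobenius twist. The $\lambda$-dependence enters only through the degrees of the polynomials produced by the reduction, which grow linearly in the $(q-1)$-order just as in Proposition~\ref{prop.den2}, so no genuinely new estimate is needed. A minor secondary point is to confirm that the isolated points appearing in the non-$\lambda$ version (the analogues of the $p_j$ of~\eqref{4p}), which came from the summed scheme $Y$, are irrelevant here: the class $g_{E/B}(q)$ is built from the per-$n$ symmetrisation $\Sf_{E/B_x}(q)$, whose geometry is the clean family $E/B$, so the push-forward indeed lands in $\calH_\nv^1(E/B)$ and not in the cohomology of a larger scheme.
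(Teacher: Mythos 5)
Your proposal is correct and takes essentially the same route as the paper: the paper's proof of Theorem~\ref{thm.3la} is verbatim that of Theorem~\ref{thm.3}, i.e.\ Theorem~\ref{thm.2la} places $\Sf_{E/B_x}(q)$ in $\calH_{\mathsf{R}/\BZ[x^{\pm 1},\lambda]}$ and the push-forward Theorem~\ref{thm.push} applied to this element times $dx/x$ gives $g_{E/B}(q)\in\calH_\nv^1(E/B)$. Your additional checks (membership in the analytic subring, Frobenius-invariance of $dx/x$, and the convergence bookkeeping under Griffiths reduction) are exactly the hypotheses the paper leaves implicit, so they strengthen rather than alter the argument.
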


The first few values of the expansion of $g$ at $q=1$ is given by
\be
\label{gla}
\begin{tiny}
\begin{aligned}
&g_{E/B}(q)
\=
\Big(1 + \frac{-4\lambda^6 - 283\lambda^4 - 848\lambda^2}{
10 (\lambda^2-16)^3 (7 \lambda^2 + 16)
}(q-1)^4 + \cdots\Big)\om
\\
&+\Big((q-1)^2 - (q-1)^3 + \frac{74\lambda^8 - 551\lambda^6 + 95664\lambda^4
  - 16640\lambda^2 - 602112}{
  10 (\lambda^2-16)^3 (7 \lambda^2 + 16)}
(q-1)^4 + \cdots\Big)\om'\in H^{1}(E/B)[\![q-1]\!]\,.
\end{aligned}
\end{tiny}
\ee


\subsection{Effective computation of linear $q$-difference equations}
\label{sub.eff}

As mentioned in the introduction, the explicit classes in the Habiro cohomology
as expressed one way or another by proper $q$-hypergeometric multisums, see
for example~\eqref{hnladef}, \eqref{hnlasym}, \eqref{3dlasum}. The main theorem
of Wilf--Zeilberger is that proper $q$-hypergeometric sums are $q$-holonomic, in all
of their variables~\cite{WZ}. What's more, their theorem comes with an explicit
description of these $q$-holonomic modules which has been implemented by many
authors (see e.g.,~\cite{PWZ}), and among them by Koutschan~\cite{Koutschan} whose
method we will use. 

For example, the function $h_n(\lambda,q)$ of Equation~\eqref{hnladef} is a
proper $q$-hypergeometric multisum, hence $q$-holonomic in the variables $(n,\lambda)$.
The same conclusion holds for the symmetrisation $\Sh_n(\lambda,q)$ of 
$h_n(\lambda,q)$
\be
\label{hnlasym}
\Sh_n(\lambda,q) \= 
h_n(\lambda,q) h_n(\lambda,q^{-1})
\ee
which is also $q$-holonomic in the variables $(n,\lambda)$.

The \texttt{qHolonomic} package of Koutschan~\cite{Koutschan} computes the
annihilator of $h_n(\lambda,q)$ and of its symmetrisation, and 
implies that the function $\Sh_n(\lambda,q)$ satisfies the linear
$q$-difference equation

\begin{small}
\be
\begin{aligned}
\label{qdiffhx}
  & q^{2n} (1 - \lambda q^{2+n} + q^{2n}) h_n(\lambda,q) \\
  & + (-1 + \lambda q^{1+n} - q^{2n}) 
(1 - \lambda q^{1+n} - \lambda q^{2+n} + q^{2n} + \lambda^2 q^{3+2n}
- \lambda q^{1+3n} - \lambda q^{2+3n} + q^{4n}) h_n(\lambda q,q)  \\ &
- (-1 + \lambda q^{2+n} - q^{2+n}) (1 - \lambda q^{1+n} - \lambda q^{2+n} + q^{2n} + 
\lambda^2 q^{3+2n} - \lambda q^{1+3n} - \lambda q^{2+3n} + q^{4n})
h_n(\lambda q^2,q) \\ &
- q^{2n} (1 - \lambda q^{1+n} + q^{2n}) h_n(\lambda q^3) \= 0 \,. 
\end{aligned}
\ee
\end{small}


A fortiori, it follows that the element $\Sf_{E/B}(q)$
of $\calH_\nv^1(E/B)$ satisfies the linear $q$-difference equation
~\eqref{qdiffhx} with $x=q^n$.

Finally, we discuss the series of Equation~\eqref{3dlasum}, which is also a 
proper $q$-hypergeometric multisum and hence $q$-holonomic. In this case,
the \texttt{qHolonomic} package computes a 6th order linear $q$-difference equation
\be
\label{order6}
\sum_{j=0}^{10} a_j(\lambda,q) h(q^j \lambda,q) \=0
\ee
for the series~\eqref{3dlasum}, and a fortiori for the elements
$g_{E/B}(q)$ of the Habiro cohomology $\calH_\nv^1(E/B)$, with respect to
$\lambda$, where

\begin{small}
\be
\label{a06}
\begin{aligned}
a_0 \= & q^6 (-1 - q + \lambda q^4) (1 + q + \lambda q^4) 
\\
a_1 \= & -q^4 (-2 - 6 q - 8 q^2 - 6 q^3 - 2 q^4 - \lambda^2 q^4 - 3 \lambda^2 q^5 - 
3 \lambda^2 q^6 - 2 \lambda^2 q^7 + 2 \lambda^2 q^8 + 3 \lambda^2 q^9
\\ & + 2 \lambda^2 q^{10} + 
\lambda^4 q^{12} + \lambda^4 q^{13}) \\ 
a_2 \= &
q^2 (-1 - 6 q - 14 q^2 - 18 q^3  - 14 q^4 - 6 q^5 - q^6 + 3 \lambda^2 q^6 + 
   8 \lambda^2 q^7 + 10 \lambda^2 q^8 + 10 \lambda^2 q^9 \\ & + 9 \lambda^2 q^{10} + 
   6 \lambda^2 q^{11} - \lambda^4 q^{11} + \lambda^2 q^{12} - 2 \lambda^4 q^{12} 
   - 2 \lambda^4 q^{13} - 
   4 \lambda^4 q^{14} - 3 \lambda^4 q^{15} + \lambda^6 q^{19}) 
\\
a_3 \= &   
-q (-2 - 8 q - 18 q^2 - 24 q^3 - 18 q^4 - 8 q^5 - 2 q^6 + 
3 \lambda^2 q^6 + 11 \lambda^2 q^7 + 23 \lambda^2 q^8 + 26 \lambda^2 q^9 \\ &
+ 23 \lambda^2 q^{10} + 11 \lambda^2 q^{11} + 3 \lambda^2 q^{12} - 3 \lambda^4 q^{13}
- 7 \lambda^4 q^{14} - 8 \lambda^4 q^{15} - 7 \lambda^4 q^{16} - 3 \lambda^4 q^{17}
+ 2 \lambda^6 q^{21}) 
\\   
a_4 \= &   (-1 - 6 q - 14 q^2 - 18 q^3 - 14 q^4 - 6 q^5 - q^6 + \lambda^2 q^6
   + 6 \lambda^2 q^7 + 9 \lambda^2 q^8 + 10 \lambda^2 q^9 + 10 \lambda^2 q^{10}
   \\ &
   + 8 \lambda^2 q^{11} + 3 \lambda^2 q^{12} - 
   3 \lambda^4 q^{15} - 4 \lambda^4 q^{16} - 2 \lambda^4 q^{17}
   - 2 \lambda^4 q^{18} - \lambda^4 q^{19} + 
   \lambda^6 q^{23}) 
\\
a_5 \= &   (2 + 6 q + 8 q^2 + 6 q^3 + 2 q^4
   - 2 \lambda^2 q^6 
   - 3 \lambda^2 q^7 - 2 \lambda^2 q^8 + 2 \lambda^2 q^9 + 3 \lambda^2 q^{10}
   + 3 \lambda^2 q^{11} \\ &
   + \lambda^2 q^{12} - \lambda^4 q^{15} - \lambda^4 q^{16}) 
\\
a_6 \= &   
 (-1 - q + \lambda q^3) (1 + q + \lambda q^3) f(\lambda q^6) \,.
\end{aligned}
\ee
\end{small}

The equation~\eqref{order6} has $(q,\lambda,\text{shift})$-degree $(23,6,6)$.
Its Newton polygon is shown in Figure~\ref{fig:newton}. 

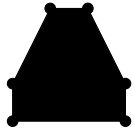
\begin{figure}[htpb!]
\begin{center}  
\begin{tikzpicture}[scale=0.25]
\filldraw (0,0) circle (8pt);
\filldraw (0,2) circle (8pt);
\filldraw (2,6) circle (8pt);
\filldraw (4,6) circle (8pt);
\filldraw (6,2) circle (8pt);
\filldraw (6,0) circle (8pt);
\filldraw[opacity=0.2] (0,0) -- (0,2) -- (2,6) -- (4,6) -- (6,2) -- (6,0) -- cycle;
\end{tikzpicture}
\caption{The Newton polygon of the 6th order operator ~\eqref{order6}.}
\label{fig:newton}
\end{center}
\end{figure}

The edge polynomial (in the $w$-variable) of the bottom horizontal edge $e_0$ 
of length 6, the edge $e_1$
of length 2 and slope 2, the top horizontal edge $e_2$ of length 2 and
the edge $e_3$ of length 2 and slope -2 are given by
\be
\label{edgepoly}
\begin{aligned}
  e_0 : & \,\, (q - w)^2 (q^2 - w)^2 (-1 + w)^2 \\
  e_1 : & \,\, 1 - q^2 w - q^3 w + q^7 w^2 \\
  e_2 : & \,\, (-1 + q w)^2 \\
  e_3 : & \,\,  q^{17} - q^9 w - q^{10} w + w^2 \,.
\end{aligned}
\ee

We did two sanity checks of the equation~\eqref{order6}. Namely, 
we computed the series $g_{E/B}(q) + O(q-1)^{12}$ (whose first
two terms are given in~\eqref{gla}), as well as the series
$g_{E/B}(q) + O(q-1)^{50}$ when $\lambda=q^\ell$ for
$\ell=0,\dots,10$ and verified that they both satisfy Equation~\eqref{order6}.

\subsection{$q$-deformation of the Picard-Fuchs equation}
\label{sub.q=1}

The $q=1$ specialization of the linear $q$-difference equation~\eqref{order6}
is the second order linear $q$-difference equation
\be
\label{PF41}
(\lambda-4) \lambda (\lambda+4)f''(\lambda) + (-16 + 3 \lambda^2) f'(\lambda)
+ \lambda f(\lambda) = 0\,.     
\ee
This is the Picard-Fuchs equation for the Gauss--Manin connection of this
elliptic family.

So, the fact that our explicit elements in $H^1_\dR(E/B)$
generate a $q$-holonomic module with respect to the variable $\lambda$,
implies the following.

\begin{corollary}
\label{cor.41}
Our explicit elements in the first Habiro cohomology of a family of elliptic curves
gives a canonical $q$-deformation of its Picard-Fuchs equation.
\end{corollary}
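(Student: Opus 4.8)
The plan is to obtain the corollary by combining three facts already in place above: the membership $g_{E/B}(q)\in\calH_\nv^1(E/B)$ of Theorem~\ref{thm.3la}; the $q$-holonomicity in $\lambda$ of the proper $q$-hypergeometric multisum from which $g_{E/B}(q)$ is built; and the identification of the $q\to1$ limit of the resulting $q$-difference operator with the Picard--Fuchs operator of $E/B$. The role of \emph{canonical} in the statement is that the $q$-holonomic submodule and its defining $q$-difference equation are intrinsic to the Habiro cohomology class $g_{E/B}(q)$, not to any presentation, and that their reduction modulo $q-1$ is the classical Gauss--Manin connection.

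First I would recall that $g_{E/B}(q)$ is the push-forward, under Theorem~\ref{thm.push}, of the symmetrised series $\Sf_{E/B}(q)$ of Theorem~\ref{thm.2la}, and that $\Sf_{E/B}(q)$, equivalently the summed symmetrisation $\sum_n h_n(\lambda,q)h_n(\lambda,q^{-1})$ of Equation~\eqref{3dlasum}, is a proper $q$-hypergeometric multisum in the variable $\lambda$. By the Wilf--Zeilberger theorem~\cite{WZ} such a multisum is $q$-holonomic in $\lambda$, and the \texttt{qHolonomic} package of Koutschan~\cite{Koutschan} produces the explicit sixth-order annihilating operator $L_q=\sum_{j=0}^{6}a_j(\lambda,q)\sigma_\lambda^{\,j}$ of Equations~\eqref{order6}--\eqref{a06}, where $\sigma_\lambda$ is the shift $\lambda\mapsto q\lambda$. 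Since the push-forward (residue) map of Theorem~\ref{thm.push} commutes with $\sigma_\lambda$ — the form $\om=\tfrac{dx}{x}$ carries no $\lambda$ — the class $g_{E/B}(q)$ is annihilated by the same $L_q$, so it generates a $q$-holonomic $\BZ[\lambda,q]\langle\sigma_\lambda\rangle$-submodule $\calM_q\subseteq\calH_\nv^1(E/B)$ of rank at most $6$.

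Next I would carry out the classical limit. Writing $q=e^{\hbar}$, so that $\sigma_\lambda$ becomes $e^{\hbar\th}=1+\hbar\th+\mathrm{O}(\hbar^2)$, the operator $L_q$ expands as $L_q=\sum_{i\geq 0}\hbar^i L^{(i)}$ with each $L^{(i)}$ a differential operator in $\th$ with coefficients in $\BQ[\lambda]$. The expansion~\eqref{eq:qp.q1} of $q$-Pochhammer symbols shows that the $q=1$ value $g_{E/B}(1)$ lands in $H^1_\dR(E/B)$ as a multiple of $[\om]$, $\om=dx/y$; and the assertion recorded above — that the $q\to1$ specialisation of~\eqref{order6} is the second-order equation~\eqref{PF41} — says precisely that the first nonvanishing operator in $\sum_i\hbar^i L^{(i)}$ is, up to a unit in $\BQ[\lambda]$, the Picard--Fuchs operator $P(\lambda,\th)$ of~\eqref{PF41}. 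Since $P(\lambda,\th)\om$ is an exact form, as verified in Subsection~\ref{sub.41lambda}, the reduction $\calM_q\bmod(q-1)$ is exactly the cyclic $D$-module generated by $[\om]=[dx/y]$ in $H^1_\dR(E/B)$, i.e.\ the classical Picard--Fuchs module with its Gauss--Manin connection $\nabla$. To conclude canonicity I would note that neither $\calM_q$ nor $L_q$ depends on the auxiliary data used to present $g_{E/B}(q)$ — the contour $\calC$, the Frobenius basis of solutions, or the residue presentation~\eqref{fslares} — since all of these compute the same Habiro cohomology class, which is attached to $E/B$ by Theorems~\ref{thm.3la} and~\ref{thm.push}; hence $\bigl(\calM_q,L_q\bigr)$ is a $q$-deformation of the Picard--Fuchs module $\bigl(H^1_\dR(E/B),\nabla\bigr)$ functorially attached to the family.

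The step I expect to be the main obstacle is reconciling the order jump with the specialisation: $L_q$ has order $6$ while $P(\lambda,\th)$ has order $2$, so one must check that the leading nonzero part of the $\hbar$-expansion of $L_q$ is exactly $P$ — not merely a left multiple of $P$, and not an operator of order $<2$ — i.e.\ that the classical limit of the rank-$\le6$ module $\calM_q$ is the full rank-$2$ Picard--Fuchs module rather than a degenerate submodule. For the $4_1$ family this is settled by the explicit form of $L_q$ in~\eqref{order6}--\eqref{a06} together with the geometric input $P\om\in d(\cdots)$; in a general family it is the genuinely delicate point, and it is precisely the phenomenon behind the ``excess rank $6$'' submodule of Habiro cohomology discussed in the introduction.
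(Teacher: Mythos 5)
Your main line coincides with the paper's: the summed symmetrised series \eqref{3dlasum} is a proper $q$-hypergeometric multisum, hence $q$-holonomic in $\lambda$ by Wilf--Zeilberger, the \texttt{qHolonomic} package produces the sixth-order operator \eqref{order6}--\eqref{a06}, which annihilates the pushed-forward class $g_{E/B}(q)$ as well, and the $q=e^{\hbar}$ expansion of this operator has the Picard--Fuchs equation \eqref{PF41} as its first nonvanishing coefficient. This is precisely how the paper deduces the corollary, and that part of your argument is fine.

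Where you diverge --- and where there is a genuine gap --- is the word \emph{canonical}. The paper defines it concretely: a linear $q$-difference equation with coefficients in $\BZ[\lambda,q]$ which is of smallest order and whose coefficients have content $1$. Justifying this requires a minimality statement, and the paper spends the rest of Section~\ref{sub.q=1} on exactly that point: evidence (failure of Pad\'e recognition, apparently factorial growth of the coefficients of the putative rational functions) that $g_{E/B}(q)$ satisfies \emph{no} second-order $q$-difference equation, even though $\calH_\nv^1(E/B)$ has rank $2$ over the Habiro ring, together with the exterior-square criterion of~\cite{GK:74} as a potential rigorous route. Your substitute argument --- that $\calM_q$ and $L_q$ are intrinsic to the cohomology class and hence canonical --- does not work for $L_q$: an operator produced by creative telescoping is not guaranteed to be of minimal order (the paper says so explicitly), and only the annihilator ideal, not a particular sixth-order element of it, is attached to the class; intrinsicness of the $q$-holonomic module by itself does not single out a canonical equation, so the minimality question cannot be sidestepped. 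Two smaller slips: for this family the relevant holomorphic form is $\om=\frac{1}{\delta}\frac{dx}{x}$ (see \eqref{om2} and its $\lambda$-deformation), not $dx/y$, which belongs to the Legendre family; and the exactness computation $P\om=d(\cdots)$ is carried out in the paper only for the Legendre families (Sections~\ref{sub.leg1a} and~\ref{sub.leg2a}), not in Section~\ref{sub.41lambda} --- for the $4_1$ family the identification of \eqref{PF41} with the Picard--Fuchs equation is asserted rather than verified there.
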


By canonical, we mean a linear $q$-difference equation with coefficients in
$\BZ[\lambda,q]$ which is of smallest order and with coefficients of content $1$.

At this point, one might expect that the 6th order operator~\eqref{order6} has
a second order right linear factor that the element of
$g_{E_{4_1,\lambda}/B_{\lambda}}(q)$ of the Habiro cohomology satisfies, whose
specialization to $q=1$ is the Picard-Fuchs equation of the family
$E/B$. After all, the package \texttt{qHolonomic} does not
guarantee to find a minimal order $q$-difference operators, and in addition the
Habiro cohomology $\calH_\nv^1(E/B)$ is a module of
rank $2$ over the Habiro ring
$\calH_{\BZ[\lambda,\tfrac{1}{\lambda(\lambda^2-16)}]/\BZ[\lambda]}$. 

However, after explicitly computing the series
$g_{E/B}(q) +O(q-1)^{100}$ for $\lambda=q^n$ for $n=-1,0,1$
and solving as was done in Equation~\eqref{arel}, we found two power series in $q-1$
that ought to be in $\BQ(q)$, with 50 coefficients known.
In that case, Pade
approximation failed to recognize them as rational functions, and what's more, the
coefficients of these series experimentally (and up to the 100th coefficient in
powers of $q-1$) appear to be growing factorially, and not exponentially.
This convinced us that there is no second order linear $q$-difference equation for
$g_{E/B}(q)$.

Incidentally, there is a computable sufficient criterion to prove that the 6th order
operator has no second order right factor, explained in ~\cite{GK:74}. The 
criterion is to compute the second exterior power and show that it does not have
a linear factor. Both tasks are possible and their implementation is explained in
detail in~\cite{GK:74}, but the size of the output is formidable. The second
exterior power of the 6th order operator~\eqref{order6} is a
$q$-difference operator of order $15$, and using the methods of~\cite{GK:74},
we found that the exponent of its coefficients with respect to
$(q,\lambda,\text{shift})$ is $(615,64,15)$.


Finally, we mention that canonical $q$-deformation of the Picard-Fuchs equation
of the quintic family was given explicitly in~\cite[Lem.1.6]{GS:quintic}, see
also Wen~\cite{Wen}, where the $q$-deformation was obtained from the genus 0 quantum
$K$-theory of the quintic. As it turns out, the $q$-deformation of the 4th order
Picard-Fuchs equation was a 24th order $q$-difference operator (see
e.g.,~\cite[Eqn.(1.3)]{Wen}). 

\bibliographystyle{plain}
\bibliography{biblio}

\begin{thebibliography}{10}

\bibitem{AESZ}
AESZ.
\newblock Calabi-{Y}au manifolds database.
\newblock \href{https://cydb.mathematik.uni-mainz.de/}{ Calabi-Yau manifolds
  database}.

\bibitem{Berthelot:comparison}
Pierre Berthelot.
\newblock G\'{e}om\'{e}trie rigide et cohomologie des vari\'{e}t\'{e}s
  alg\'{e}briques de caract\'{e}ristique {$p$}.
\newblock Number~23, pages 3, 7--32. 1986.
\newblock Introductions aux cohomologies $p$-adiques (Luminy, 1984).

\bibitem{BS:prisms}
Bhargav Bhatt and Peter Scholze.
\newblock Prisms and prismatic cohomology.
\newblock {\em Ann. of Math. (2)}, 196(3):1135--1275, 2022.

\bibitem{Boyarsky}
Maurizio Boyarsky.
\newblock {$p$}-adic gamma functions and {D}work cohomology.
\newblock {\em Trans. Amer. Math. Soc.}, 257(2):359--369, 1980.

\bibitem{Candelas:pair}
Philip Candelas, Xenia de~la Ossa, Paul Green, and Linda Parkes.
\newblock A pair of {C}alabi-{Y}au manifolds as an exactly soluble
  superconformal theory.
\newblock {\em Nuclear Phys. B}, 359(1):21--74, 1991.

\bibitem{Candelas:local}
Philip Candelas and Duco van Straten.
\newblock Local {Z}eta {F}unctions {F}rom {C}alabi-{Y}au {D}ifferential
  {E}quations.
\newblock Preprint 2021,
  \href{https://arxiv.org/abs/2104.07816}{arXiv:2104.07816}.

\bibitem{Coleman:fermat}
Robert Coleman.
\newblock On the {F}robenius matrices of {F}ermat curves.
\newblock In {\em {$p$}-adic analysis ({T}rento, 1989)}, volume 1454 of {\em
  Lecture Notes in Math.}, pages 173--193. Springer, Berlin, 1990.

\bibitem{CCGLS}
Daryl Cooper, Marc Culler, Henry Gillet, Daryl Long, and Peter Shalen.
\newblock Plane curves associated to character varieties of {$3$}-manifolds.
\newblock {\em Invent. Math.}, 118(1):47--84, 1994.

\bibitem{Cox-Katz}
David Cox and Sheldon Katz.
\newblock {\em Mirror symmetry and algebraic geometry}, volume~68 of {\em
  Mathematical Surveys and Monographs}.
\newblock American Mathematical Society, Providence, RI, 1999.

\bibitem{DGG1}
Tudor Dimofte, Davide Gaiotto, and Sergei Gukov.
\newblock 3-manifolds and 3d indices.
\newblock {\em Adv. Theor. Math. Phys.}, 17(5):975--1076, 2013.

\bibitem{DG}
Tudor Dimofte and Stavros Garoufalidis.
\newblock The quantum content of the gluing equations.
\newblock {\em Geom. Topol.}, 17(3):1253--1315, 2013.

\bibitem{DG2}
Tudor Dimofte and Stavros Garoufalidis.
\newblock Quantum modularity and complex {C}hern-{S}imons theory.
\newblock {\em Commun. Number Theory Phys.}, 12(1):1--52, 2018.

\bibitem{GHRS}
Stavros Garoufalidis, Craig Hodgson, Hyam Rubinstein, and Henry Segerman.
\newblock 1-efficient triangulations and the index of a cusped hyperbolic
  3-manifold.
\newblock {\em Geom. Topol.}, 19(5):2619--2689, 2015.

\bibitem{GK:mero}
Stavros Garoufalidis and Rinat Kashaev.
\newblock A meromorphic extension of the 3{D} index.
\newblock {\em Res. Math. Sci.}, 6(1):Paper No. 8, 34, 2019.

\bibitem{GK:74}
Stavros Garoufalidis and Christoph Koutschan.
\newblock Irreducibility of {$q$}-difference operators and the knot {$7_4$}.
\newblock {\em Algebr. Geom. Topol.}, 13(6):3261--3286, 2013.

\bibitem{GS:quintic}
Stavros Garoufalidis and Emanuel Scheidegger.
\newblock On the quantum {K}-theory of the quintic.
\newblock {\em SIGMA Symmetry Integrability Geom. Methods Appl.}, 18:Paper No.
  021, 20, 2022.

\bibitem{GSWZ}
Stavros Garoufalidis, Peter Scholze, Campbell Wheeler, and Don Zagier.
\newblock The {H}abiro ring of a number field.
\newblock Preprint 2024,
  \href{https://arxiv.org/abs/2412.04241}{arXiv:2412.04241}.

\bibitem{GSW}
Stavros Garoufalidis, Matthias Storzer, and Campbell Wheeler.
\newblock Perturbative invariants of cusped hyperbolic 3-manifolds.
\newblock Preprint 2023,
  \href{https://arxiv.org/abs/2305.14884}{arXiv:2305.14884}.

\bibitem{GW:qmod}
Stavros Garoufalidis and Campbell Wheeler.
\newblock Modular $q$-holonomic modules.
\newblock Preprint 2022,
  \href{https://arxiv.org/abs/2203.17029}{arXiv:2203.17029}.

\bibitem{GW:periods}
Stavros Garoufalidis and Campbell Wheeler.
\newblock Periods, the meromorphic {3D}-index and the {T}uraev--{V}iro
  invariant.
\newblock Preprint 2022,
  \href{https://arxiv.org/abs/2209.02843}{arXiv:2209.02843}.

\bibitem{GZ:kashaev}
Stavros Garoufalidis and Don Zagier.
\newblock Knots, perturbative series and quantum modularity.
\newblock {\em SIGMA Symmetry Integrability Geom. Methods Appl.}, 20:Paper No.
  055, 2024.

\bibitem{Gasper}
George Gasper and Mizan Rahman.
\newblock {\em Basic hypergeometric series}, volume~96 of {\em Encyclopedia of
  Mathematics and its Applications}.
\newblock Cambridge University Press, Cambridge, second edition, 2004.
\newblock With a foreword by Richard Askey.

\bibitem{Griffiths}
Phillip Griffiths.
\newblock On the periods of certain rational integrals. {I}, {II}.
\newblock {\em Ann. of Math. (2)}, 90:460--495; 90 (1969), 496--541, 1969.

\bibitem{Gross:periods}
Benedict Gross.
\newblock On the periods of abelian integrals and a formula of {C}howla and
  {S}elberg.
\newblock {\em Invent. Math.}, 45(2):193--211, 1978.
\newblock With an appendix by David E. Rohrlich.

\bibitem{Jackson}
F.H. Jackson.
\newblock On {$q$}-definite integrals.
\newblock {\em Quarterly Journal of Pure and Applied Mathematics}, 41:193--203,
  1910.

\bibitem{Kedlaya:practice}
Kiran Kedlaya.
\newblock {$p$}-adic cohomology: from theory to practice.
\newblock In {\em {$p$}-adic geometry}, volume~45 of {\em Univ. Lecture Ser.},
  pages 175--203. Amer. Math. Soc., Providence, RI, 2008.

\bibitem{Kedlaya:book}
Kiran Kedlaya.
\newblock {\em {$p$}-adic differential equations}, volume 125 of {\em Cambridge
  Studies in Advanced Mathematics}.
\newblock Cambridge University Press, Cambridge, 2010.

\bibitem{Kedlaya:effective}
Kiran Kedlaya.
\newblock Effective {$p$}-adic cohomology for cyclic cubic threefolds.
\newblock In {\em Computational algebraic and analytic geometry}, volume 572 of
  {\em Contemp. Math.}, pages 127--171. Amer. Math. Soc., Providence, RI, 2012.

\bibitem{KZ:periods}
Maxim Kontsevich and Don Zagier.
\newblock Periods.
\newblock In {\em Mathematics unlimited---2001 and beyond}, pages 771--808.
  Springer, Berlin, 2001.

\bibitem{Koutschan}
Christoph Koutschan.
\newblock {\em Advanced Applications of the Holonomic Systems Approach}.
\newblock PhD thesis, RISC, Johannes Kepler University, Linz, Austria, 2009.

\bibitem{Lairez}
Pierre Lairez.
\newblock Computing periods of rational integrals.
\newblock {\em Math. Comp.}, 85(300):1719--1752, 2016.

\bibitem{Morita}
Yasuo Morita.
\newblock A {$p$}-adic analogue of the {$\Gamma $}-function.
\newblock {\em J. Fac. Sci. Univ. Tokyo Sect. IA Math.}, 22(2):255--266, 1975.

\bibitem{PWZ}
Marko Petkov{\v{s}}ek, Herbert~S. Wilf, and Doron Zeilberger.
\newblock {\em {$A=B$}}.
\newblock A K Peters Ltd., Wellesley, MA, 1996.
\newblock With a foreword by Donald E. Knuth, With a separately available
  computer disk.

\bibitem{RV:hyper}
David Roberts and Fernando Rodriguez~Villegas.
\newblock Hypergeometric motives.
\newblock {\em Notices Amer. Math. Soc.}, 69(6):914--929, 2022.

\bibitem{RV:Apoly}
Fernando Rodriguez~Villegas.
\newblock A refinement of the {A}-polynomial of quivers.
\newblock Preprint 2011,
  \href{https://arxiv.org/abs/1102.5308}{arXiv:1102.5308}.

\bibitem{Scholze:habcoh}
Peter Scholze.
\newblock Habiro cohomology.
\newblock \href{https://archive.mpim-bonn.mpg.de/id/eprint/5155/}{Course MPIM
  Bonn, 2025}.

\bibitem{Scholze:hab}
Peter Scholze.
\newblock The {H}abiro ring of a number field.
\newblock \href{https://archive.mpim-bonn.mpg.de/id/eprint/5132/}{Course MPIM
  Bonn, 2024}.

\bibitem{Scholze:canonical}
Peter Scholze.
\newblock Canonical {$q$}-deformations in arithmetic geometry.
\newblock {\em Ann. Fac. Sci. Toulouse Math. (6)}, 26(5):1163--1192, 2017.

\bibitem{Shirai}
Ryotaro Shirai.
\newblock {$q$}-deformation with {$(\varphi,\Gamma)$} structure of the de rham
  cohomology of the {L}egendre family of elliptic curves.
\newblock Preprint 2020,
  \href{https://arxiv.org/abs/2006.12310}{arXiv:2006.12310}.

\bibitem{Straub}
Armin Straub.
\newblock Multivariate {A}p\'{e}ry numbers and supercongruences of rational
  functions.
\newblock {\em Algebra Number Theory}, 8(8):1985--2007, 2014.

\bibitem{Wagner:habiro}
Ferdinand Wagner.
\newblock In preparation.

\bibitem{Wen}
Yaoxinog Wen.
\newblock Difference equation for quintic 3-fold.
\newblock {\em SIGMA Symmetry Integrability Geom. Methods Appl.}, 18:Paper No.
  043, 25, 2022.

\bibitem{WZ}
Herbert Wilf and Doron Zeilberger.
\newblock An algorithmic proof theory for hypergeometric (ordinary and
  ``{$q$}'') multisum/integral identities.
\newblock {\em Invent. Math.}, 108(3):575--633, 1992.

\end{thebibliography}

\end{document}